\theoremstyle{plain}
\newtheorem{thm}{Theorem}[section]
\newtheorem{lemma}[thm]{Lemma}
\newtheorem{prop}[thm]{Proposition}
\newtheorem{cor}[thm]{Corollary}
\theoremstyle{definition}
\theoremstyle{remark}
\newtheorem{remark}[thm]{Remark}
\newcommand{\nc}{\newcommand}
\def\makeop#1{\expandafter\def\csname#1\endcsname
  {\mathop{\rm #1}\nolimits}\ignorespaces}
\def\makebb#1{\expandafter\def
  \csname bb#1\endcsname{{\mathbb{#1}}}\ignorespaces}
\def\makebf#1{\expandafter\def\csname bf#1\endcsname{{\bf
      #1}}\ignorespaces} 
\def\makegr#1{\expandafter\def
  \csname gr#1\endcsname{{\mathfrak{#1}}}\ignorespaces}
\def\makescr#1{\expandafter\def
  \csname scr#1\endcsname{{\EuScript{#1}}}\ignorespaces}
\def\makecal#1{\expandafter\def\csname cal#1\endcsname{{\mathcal
      #1}}\ignorespaces} 
\def\doLetters#1{#1A #1B #1C #1D #1E #1F #1G #1H #1I #1J #1K #1L #1M
                 #1N #1O #1P #1Q #1R #1S #1T #1U #1V #1W #1X #1Y #1Z}
\def\doletters#1{#1a #1b #1c #1d #1e #1f #1g #1h #1i #1j #1k #1l #1m
                 #1n #1o #1p #1q #1r #1s #1t #1u #1v #1w #1x #1y #1z}
     \def\qed{\qedmark\medbreak}%
\def\qedmark{{\enspace\vrule height 6pt width 5pt depth 1.5pt}}%
    \def\setminus{\smallsetminus}
\def\Spec{{\rm Spec}}
\def\Fpbar{\overline{\bbF}_p}
\def\Qpbar{\overline{{\bbQ}_p}}
\def\Qbar{\overline{\bbQ}}
\newcommand{\Z}{\mathbb Z}
\newcommand{\Q}{\mathbb Q}
\newcommand{\R}{\mathbb R}
\newcommand{\C}{\mathbb C}
\newcommand{\A}{\mathbb A}    
\renewcommand{\O}{\mathcal O} 
\newcommand{\F}{\mathbb F}
\newcommand{\pr}{\indent }
\newcommand{\<}{\langle}   
\renewcommand{\>}{\rangle} 
\newcommand{\isoto}{\stackrel{\sim}{\to}}
\nc{\embed}{\hookrightarrow}
\newcommand{\ch}{characteristic }
\newcommand{\ac}{algebraically closed }
\newcommand{\dieu}{Dieudonn\'{e} }
\nc{\ol}{\overline}
\nc{\wt}{\widetilde}
\nc{\opp}{\mathrm{opp}}
\def\ul{\underline}
\begin{document}
\renewcommand{\thefootnote}{\fnsymbol{footnote}}
\setcounter{footnote}{-1}
\numberwithin{equation}{section}

\title[Hilbert-Blumenthal moduli spaces]{Irreducibility of the
  Hilbert-Blumenthal moduli spaces with parahoric level structure}

\author{Chia-Fu Yu}
\address{
Institute of Mathematics \\
Academia Sinica \\
128 Academia Rd.~Sec.~2, Nankang\\ 
Taipei, Taiwan \\ and NCTS (Taipei Office)}
\address{
Max-Planck-Institut f\"ur Mathematik \\
Vivatsgasse 7 \\
Bonn, 53111\\ 
Germany}
\email{chiafu@math.sinica.edu.tw}

\date{\today.  The research is partially supported by NSC
 96-2115-M-001-001, the Orchid project}


\begin{abstract}
  We determine the number of irreducible components of the
  reduction mod $p$ of any Hilbert-Blumenthal moduli space 
  with a parahoric level structure, where 
  $p$ is unramified in the totally real field. 
\end{abstract} 

\maketitle


\def\k{\mathrm k}


 

\section{Introduction}
\label{sec:01}

In their 1984 paper \cite{brylinski-labesse}, Brylinski and Labesse
computed the L-factors of Hilbert-Blumenthal moduli spaces for almost
all good places. By that time the arithmetic minimal compactification
was not known. In \cite{chai:amchb} Chai furnished the desired minimal
compactification by observing that Rapoport's arithmetic toroidal
compactification \cite{rapoport:thesis} plays the crucial role. Thus,
the results of Brylinski and Labesse have been improved for all good
places (see \cite[p.~137]{faltings-chai}).  
A next task is to treat the case where $p$ is unramified
and the level group $K_p$ at $p$ is a standard
Iwahori subgroup. This moduli space is studied in Stamm \cite{stamm},  
following the works of Zink \cite{zink:thesis} and of Rapoport-Zink 
\cite{rapoport-zink}. Several local
properties on geometry as well as fine global descriptions of the
surface case have been obtained in \cite{stamm}.
In this paper we settle a global problem
concerning the irreducibility in this moduli space. \\

Let $p$ be a fixed rational prime. Let $F$ be a totally real number 
field of degree $g$ and $O_F$ the ring of integers. Let $n\ge 3$ be a
prime-to-$p$ integer. Choose a primitive $n$th root $\zeta_n$ of
unity in 
$\Qbar\subset \C$ and an embedding $\Qbar\hookrightarrow \Qbar_p$.  
Let $(L,L^+)$ be a rank one projective $O_F$-module with a notion of
positivity. Let $\calM_{(L,L^+),n}$ denote the moduli space over
$\Z_{(p)}[\zeta_n]$ that parametrizes equivalence classes of objects $\ul
A=(A,i,\iota,\eta)$ over a locally Noetherian 
$\Z_{(p)}[\zeta_n]$-scheme $S$, where
\begin{itemize}
\item $A$ is an abelian scheme of relative dimension $g$;
\item $\iota:O_F \to \End_S(A)$ is a ring monomorphism;
\item $i:(L,L^+)_S\to (\calP(A),\calP(A)^+)$ is a morphism
  of {\'e}tale sheaves such that the induced morphism
  \begin{equation}
    \label{eq:101}
    L\otimes_{O_F} A\to A^t 
  \end{equation}
  is an isomorphism, where $(\calP(A),\calP(A)^+)$ is the polarization
  sheaf of $A$ (see \cite{deligne-pappas});
\item $\eta:(O_F/n O_F)_S^2\simeq A[n]$ is an $O_F$-linear isomorphism
  such that the pull back of the Weil pairing $e_{i(\lambda_0)}$ is
  the standard pairing on $(O_F/n O_F)^2$ with respect to $\zeta_n$,
  where $\lambda_0$ is any element in $L^+$ such that
  $|L/O_F\lambda_0|$ is prime to $pn$. 
\end{itemize}

It is proved in Rapoport \cite{rapoport:thesis} and 
Deligne-Pappas \cite{deligne-pappas} that
\begin{thm}[Rapoport, Deligne-Pappas]
  The fibers of $\calM_{(L,L^+),n}\to \Spec\,\Z_{(p)}[\zeta_n]$ are
  geometrically irreducible.
\end{thm}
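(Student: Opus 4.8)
The plan is to split the statement into two parts: that each geometric fiber is \emph{irreducible} once it is known to be \emph{connected}, and that connectedness does hold, the latter being deduced from the connectedness of the characteristic-zero fiber by a properness argument. For the first part, recall from Deligne--Pappas \cite{deligne-pappas} that $\calM:=\calM_{(L,L^+),n}$ is flat over $\Z_{(p)}[\zeta_n]$ with local complete intersection fibers, and that the non-smooth locus of each geometric fiber has codimension $\ge 2$; hence each fiber is Cohen--Macaulay (so $S_2$) and regular in codimension $\le 1$ ($R_1$), and therefore normal by Serre's criterion. (In the case of interest in this paper, where $p$ is unramified in $F$: since $p\nmid n$ the ring $\Z_{(p)}[\zeta_n]$ is \'etale over $\Z_{(p)}$, and the Rapoport--Zink local model \cite{rapoport-zink} is smooth, so $\calM\to\Spec\Z_{(p)}[\zeta_n]$ is even smooth; in either case the geometric fibers are normal.) Since a connected normal scheme is irreducible and each fiber is non-empty, it then suffices to show that every geometric fiber of $\calM$ is connected.

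For the generic fiber, I would use the chosen embedding $\Qbar\hookrightarrow\C$ and the classical complex-analytic description of Hilbert--Blumenthal varieties (Shimura; see Rapoport \cite{rapoport:thesis}). Fixing the polarization module $(L,L^+)$ pins down a single narrow-ideal-class component, and the normalization of the level structure $\eta$ against the Weil pairing $e_{i(\lambda_0)}$ and the root of unity $\zeta_n$ rigidifies it, so that $\calM(\C)$ is a single arithmetic quotient $\Gamma\backslash\mathfrak{H}^g$ of the connected space $\mathfrak{H}^g$, where $\Gamma$ is the principal congruence subgroup of level $n$ inside an $\mathrm{SL}_2$-type arithmetic group determined by $(L,L^+)$. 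Hence $\calM(\C)$, and therefore $\calM\otimes_{\Z_{(p)}[\zeta_n]}\Qbar$, is connected; equivalently, the generic fiber is geometrically connected, hence geometrically irreducible.

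To pass to the special fibers I would use a normal proper model: $\calM$ is not proper, but Chai's arithmetic minimal compactification \cite{chai:amchb} (built on Rapoport's toroidal compactification \cite{rapoport:thesis}) gives a normal, flat, projective $\Z_{(p)}[\zeta_n]$-scheme $\calM^*$ containing $\calM$ as a fiberwise-dense open subscheme. For $\pi\colon\calM^*\to S:=\Spec\Z_{(p)}[\zeta_n]$, the coherent sheaf $\pi_*\O_{\calM^*}$ is a finite $\O_S$-algebra, torsion-free over $\O_S$ by flatness, whose generic stalk is $H^0$ of the proper, reduced, geometrically connected generic fiber of $\calM^*$, i.e.\ $\Q(\zeta_n)$; being a finitely generated subring of $\Q(\zeta_n)$ integral over the integrally closed domain $\O_S=\Z_{(p)}[\zeta_n]$, it equals $\O_S$. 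By Zariski's connectedness theorem (via the formal function theorem applied over the strict henselizations of $\O_S$ at the primes above $p$), every geometric fiber of $\calM^*$ is then connected, hence so is every geometric fiber of the dense open $\calM$. Combined with the first paragraph, this yields geometric irreducibility of all fibers.

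The main obstacle is the input for the characteristic-zero fiber: one must verify that fixing $(L,L^+)$ together with the Weil-pairing normalization leaves exactly one connected component over $\C$, with no residual disconnection coming from the narrow class group of $F$ or from the Weil-pairing datum of the level structure. This is classical, but it is where the genuine content lies. In the ramified case one additionally needs the Deligne--Pappas local analysis of the singularities to secure the $R_1$ property, whereas in the unramified setting of the present paper that is subsumed by smoothness over $\Z_{(p)}[\zeta_n]$. The remaining ingredients---flatness, the existence of a normal proper compactification, and the propagation of connectedness---are standard.
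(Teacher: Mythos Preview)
The paper does not supply its own proof of this theorem: it is quoted as a result of Rapoport \cite{rapoport:thesis} and Deligne--Pappas \cite{deligne-pappas}, and the statement is immediately followed by the introduction of the $\Gamma_0(p)$-level moduli problem without further argument. So there is no ``paper's proof'' to compare your proposal against in the strict sense.

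That said, your outline is correct and is essentially the standard argument one finds in the cited references. It is also the same strategy the paper itself invokes later, in a closely related setting: in Section~\ref{sec:06}, Lemma~\ref{62} establishes connectedness of the geometric special fiber of the (slightly differently presented) moduli space $\calM_{\calD,n\ell^m}$ by exactly your two steps---complex uniformization $\Gamma(n\ell^m)\backslash G(\R)/SO(2,\R)^g$ for the geometric generic fiber, followed by an appeal to Rapoport's arithmetic toroidal compactification to propagate connectedness to the special fiber. Your version is somewhat more explicit about the mechanism of propagation (normality of $\calM^*$, $\pi_*\O_{\calM^*}=\O_S$, Zariski connectedness), and you correctly separate out the additional input needed in the ramified case, namely the Deligne--Pappas local analysis ensuring $R_1$ so that connected implies irreducible. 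For the unramified case treated in this paper, smoothness over $\Z_{(p)}[\zeta_n]$ makes that step automatic, as you note.

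Your identification of the one substantive point---that fixing $(L,L^+)$ and the Weil-pairing normalization against $\zeta_n$ leaves exactly one connected component over $\C$---is accurate; this is where the choice of polarization module eliminates the narrow class group ambiguity, and it is indeed the content that must be checked rather than merely asserted.
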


In the paper we consider the Iwahori level structure
$\calM_{(L,L^+),\Gamma_0(p),n}$ over $\calM_{(L,L^+),n}$ where
$\calM_{(L,L^+),n}$ has good reduction at $p$. 
The goal is to determine the
set of irreducible components of the reduction
$\calM_{(L,L^+),\Gamma_0(p),n}\otimes \ol \F_p$ modulo $p$. 
We write $\Pi_0(X)$ for the set of 
irreducible components of a Noetherian scheme $X$. Thus, if $X$ is a
scheme of finite type over a field $K$, then
$\Pi_0(X\otimes \ol K)$ is in bijection with the set of geometrically
irreducible components of $X$. The latter is the case considered in this
paper. \\

Assume that $p$ is unramified in $F$. Let
$\calM_{(L,L^+),\Gamma_0(p),n}$ denote the moduli space over
$\Z_{(p)}[\zeta_n]$ that
parametrizes equivalence classes of 
objects $(A,i,\iota,H,\eta)$, where 
\begin{itemize}
\item $(A,i,\iota,\eta)$ is in $\calM_{(L,L^+),n}$, and 
\item $H\subset A[p]$ is a finite flat rank $p^g$
subgroup scheme which is invariant under the action of $O_F$ and
maximally isotropic with respect to the Weil pairing
$e_{i(\lambda_0)}$ as above. 
\end{itemize}

Write $\calM:=\calM_{(L,L^+),n}\otimes \ol \F_p$ and
$\calM_{\Gamma_0(p)}:=\calM_{(L,L^+),\Gamma_0(p),n}\otimes \ol
\F_p$ throughout this paper. We will state our main results concerning  
the number $|\Pi_0(\calM_{\Gamma_0(p)})|$ in the next
section. We describe them together with background and methods.
See Theorem~\ref{27} and Theorem~\ref{59} for the precise statement.\\ 

The method in this paper is completely different from that used in
\cite{yu:parahoric} for the Siegel moduli spaces. In the previous paper the
proof is based on the Faltings-Chai theorem on the $p$-adic monodromy
for the ordinary locus \cite{faltings-chai} and a theorem proved by
Ng{\^o} and Genestier \cite{ngo-genestier:alcoves} that the ordinary
locus is dense in the parahoric level moduli spaces. The latter is
obtained by analyzing the so called 
{\it Kottwitz-Rapoport} stratification
introduced in \cite{kottwitz-rapoport:alcoves}
(cf. \cite{ngo-genestier:alcoves}). \\ 

For the present situation, the ordinary locus is no longer dense, as 
has been pointed out in Stamm \cite{stamm} in the surface
case. Thus Ribet's $p$-adic monodromy result \cite{ribet:hmf} can only   
conclude the irreducibility for ordinary components.
One may need to establish the surjectivity result of the naive 
$p$-adic monodromy for smaller strata in $\calM$ defined by certain
$p$-adic invariant, which is not
available yet. However, even though we can prove these $p$-adic 
monodromy results, the standard $p$-adic monodromy argument is still
insufficient to conclude the irreducibility of non-ordinary
components. Indeed, the naive $p$-adic monodromy result implies the
connectedness of covers of a stratum arising from the etale
$p^m$-torsion part, and these covers are always finite.  
On the other hand, a possibly irreducible non-ordinary component of
$\calM_{\Gamma_0(p)}$ is necessarily having positive dimensional
fibers over a stratum in $\calM$, and hence cannot be dominated 
by a finite cover.  
Besides the non-density, we do not have yet geometric properties for
Kottwitz-Rapoport strata of $\calM_{\Gamma_0(p)}$ along the direction
of work of Ng\^o-Genestier \cite{ngo-genestier:alcoves}. \\ 

To overcome these new difficulties, we stratify the moduli
space by a suitable $p$-adic invariant: 
\[ \calM_{\Gamma_0(p)}=\coprod_\alpha \calM_{\Gamma_0(p),\alpha}. \]
Then we study the corresponding {\it discrete
Hecke orbit problem}, namely asking whether the prime-to-$p$ Hecke
correspondences operate transitively on the set
$\Pi_0(\calM_{\Gamma_0(p),\alpha})$. This discrete Hecke orbit
problem, though itself does not have an affirmative answer, 
can be refined through 
the computation of the fibers of the stratified morphism
$f_\alpha: \calM_{\Gamma_0(p),\alpha}\to\calM_{\alpha}$, and is
reduced to the discrete Hecke orbit problem for the set
$\Pi_0(\calM_\alpha)$ of irreducible
components of the base. 
The former one can be done using \dieu calculus, for which the present   
computation (see Sections 3 and 4) is largely based on 
the work \cite{yu:thesis}. \\  

The next crucial ingredient is Chai's monodromy theorem on Hecke
invariant subvarieties. This is a global method which may be regarded
as the counterpart of the $p$-adic monodromy method.   
Its original form for Siegel moduli spaces
is developed by Chai \cite{chai:mono}. Chai's method works for good
reductions of any modular variety of PEL-type, 
with the modification where 
the reductive group in the Shimura
input data should be replaced by the simply-connected cover of 
its derived group \cite[p.~291]{chai:mono}.
%
We supply the proof due to Chai in Section~\ref{sec:06} for the
reader's convenience.   
This ingredient enables us to confirm
the irreducibility in the non-supersingular contribution (components
that contains non-supersingular points).  
To treat the remaining supersingular contribution (components that are
entirely contained in the supersingular locus), 
the tool is essentially the result that the Tamagawa
number is one for semi-simple, simply-connected algebraic groups 
\cite{kottwitz:tamagawa}. 
The present cases heavily rely on the computations of the {\it
  geometric mass formula} in \cite{yu:mass_hb}, which are based on work
\cite{shimura:1999} of Shimura. \\

The paper is organized as follows. In Section 2 we describe the main
theorems and provide the methods and ingredients. In Section 3 we give
the proofs of the theorems. In Section 4 we treat the supersingular
contribution. To make the exposition clean and more accessible, 
we assume $p$ inert in $F$ in these sections. 
In Sections~\ref{sec:05} we show how to establish the analogous results
in the unramified situation from the inert
case. Section~\ref{sec:06} provides a proof of Chai's result on Hecke
invariant subvarieties in the Hilbert-Blumenthal moduli spaces. 
We attempted to write this as an independent
section so that the reader can read this section alone together with
Chai's well written paper \cite{chai:mono}. \\

 {\bf Acknowledgments.} The present work is greatly benefited
through the author's participation of the Hecke orbit problem with
C.-L.~Chai. He is grateful to Chai for sharing his ideas, many
discussions and his constant support. 
He also wants to thank M.~Rapoport and B.C. Ng{\^o}
for their interest and encouragement on an earlier work
\cite{yu:parahoric}. Part of this work was done while the author's 
stay at l'Institut de Mathematiques de Jussieu in Paris and
Max-Planck-Institut f\"ur Mathematik in Bonn. 
He would like to thank V.~Maillot for the invitation and
well arrangement, M.~Harris for his interest, and both institutions
for kind hospitality and excellent working environment. Finally, he
wishes to thank the referee for careful reading and helpful 
comments which improve the exposition of this paper.


\section{Statements and methods}
\label{sec:02}

\subsection{}
We keep the notation as in the previous section. Let $k$ be an \ac
field of \ch $p$. We will assume in Sections 2-4 that $p$ is 
inert in $F$.
Write 
\begin{equation}
  \label{eq:211}
  f:\calM_{\Gamma_0(p)}\to \calM
\end{equation}
for the forgetful
morphism; this is a proper surjective morphism.

We recall the alpha stratification on the moduli space $\calM$ introduced
in Goren and Oort \cite{goren-oort} and in \cite[Section 3] {yu:thesis}.
Let $W:=W(k)$ be the ring of Witt vectors over $k$ and $\sigma$ the
absolute Frobenius map on $W$. Put $\O:=O_F\otimes \Z_p$ and let 
$\scrI:=\Hom (\O,W)=\{\sigma_i\}$ be the set of embeddings, arranged
in a way that $\sigma \sigma_i=\sigma_{i+1}$ for $i\in \Z/g\Z$.
Let $\ul A=(A,\iota)$ be an abelian $O_F$-variety over $k$, 
and let $\ul M$ be the associated covariant \dieu $\O$-module. The
alpha type of $\ul A$ is defined to be
\[ \ul a(\ul A):=\ul a(\ul M):=(a_i)_{i\in \Z/g\Z}, \quad \text{where
  }\ a_i:=\dim_k
(M/(F,V)M)^i, \]
Here $(M/(F,V)M)^i$ denotes the $\sigma_i$-component of the $k$-vector
space $M/(F,V)M$. Since $M$ is a free $O_F\otimes W$-module of rank
two, each $a_i$ lies in $\{0,1,2\}$. 
If $\ul A=(A,i,\iota,\eta)$ is a point in $\calM(k)$, the alpha type
$\ul a(\ul A)$ of $\ul A$ is defined to be the alpha type of its
underlying abelian $O_F$-variety $(A,\iota)$. In this case each $a_i$
is $0$ or $1$, since ${\rm Lie} A$ satisfies the Rapoport condition. 


For each $\ul a\in \{0,1\}^\scrI$, 
let $\calM_{\ul a}$ denote the reduced subscheme of $\calM$ consisting
of points with alpha type $\ul a$. It is known \cite{goren-oort} 
that every alpha stratum $\calM_{\ul a}$ is
non-empty.  Put $\Delta:=\{0,1\}^\scrI$; this is exactly the set of alpha
types that occur in $\calM$.  
The partial order on $\Delta$ is given by $\ul a'\preceq  \ul a$ if and
only if $a_i'\ge a_i$ for all $i\in \Z/g\Z$. 
Write $|\ul a|:=\sum_{i\in \Z/g\Z} a_i$, the size of the alpha type
$\ul a$. 
It is proved in Goren and Oort \cite{goren-oort} that each 
stratum $\calM_{\ul  a}$ is smooth, 
quasi-affine, of pure dimension $g-|\ul a|$, and that the Zariski
closure of $\ol{\calM}_{\ul a}$ in $\calM$ is smooth and
\[  \ol{\calM}_{\ul a}=\cup_{\ul a'\preceq  \ul a}\, \calM_{\ul
  a'}. \\ \]  
Thus, we have a stratification of $\calM$
\[ \calM=\coprod_{\ul a\in \Delta} \calM_{\ul a} \]
by locally closed smooth subschemes $\calM_{\ul a}$, 
called the alpha stratification.
 
For each $\ul a\in \Delta$, put $\calM_{\Gamma_0(p),\ul
  a}:=\calM_{\Gamma_0(p)}\times_\calM \calM_{\ul a}$ and let 
\[ f_{\ul a}:
\calM_{\Gamma_0(p),\ul a}\to \calM_{\ul a}. \]
be the restriction of $f$ on 
$\calM_{\Gamma_0(p),\ul a}$. We decompose the moduli space 
\[ \calM_{\Gamma_0(p)}=\coprod_{\ul a\in \Delta}
\calM_{\Gamma_0(p),\ul a} \]
into locally closed subschemes.



In \cite[Section 2]{yu:thesis} an alpha 
type $\ul a=(a_i)_i\in \Delta$ is called {\it generic} 
if $a_i a_{i+1}=0$ for all $i\in \Z/g\Z$. For example, when $g=4$, the
generic alpha types are 
\[ (0,0,0,0), (1,0,0,0), (0,1,0,0), (0,0,1,0), \]
\[ (0,0,0,1), (1,0,1,0),(0,1,0,1). \]
This notion was first
introduced by Goren and Oort \cite{goren-oort} in which 
it is called {\it spaced}. It is used in order to describe the Newton
points of maximal points of E-O strata (see \cite[Theorem 5.4.11]
{goren-oort}, also see Theorem~\ref{255}). Conversely, it is proved in 
\cite[Section 6]{yu:thesis} that the alpha type of any maximal point of
a Newton stratum of $\calM$ is generic. Let $\Delta^{\rm gen}\subset
\Delta$ denote the set of generic alpha types. 


In this paper we prove 

\begin{thm}\label{22}
We have $\dim \calM_{\Gamma_0(p),\ul a}=g$ if and only if
$\ul a$ is of generic type.      
\end{thm}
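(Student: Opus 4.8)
The plan is to compute the fiber dimension of $f_{\ul a}\colon \calM_{\Gamma_0(p),\ul a}\to \calM_{\ul a}$ at a geometric point $\ul A=(A,i,\iota,\eta)$ of $\calM_{\ul a}(k)$, and then add $\dim\calM_{\ul a}=g-|\ul a|$ (given by Goren--Oort). The fiber over $\ul A$ parametrizes the $O_F$-stable, $e_{i(\lambda_0)}$-isotropic, finite flat rank $p^g$ subgroups $H\subset A[p]$. Passing to the covariant \dieu $\O$-module $\ul M$ of $A[p]$, such $H$ correspond to $\O\otimes W$-submodules $N\subset M/pM=:\ol M$ that are stable under $F$ and $V$, self-dual for the pairing, and of the appropriate dimension. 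Working embedding-by-embedding via the decomposition $\ol M=\oplus_{i\in\Z/g\Z}\ol M^i$, where (since $p$ is inert) the Frobenius $\sigma$ cyclically permutes the components, each $\ol M^i$ is $2$-dimensional over $k$, and the conditions $F\ol M^i\subset N^{i+1}$, $V\ol M^{i+1}\subset N^i$ with the dimension/isotropy constraints pin down how much freedom there is in choosing the line (or full space) $N^i\subset\ol M^i$ at each $i$.

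The key computation, which I expect to be essentially available from the \dieu calculus of Sections 3--4 (based on \cite{yu:thesis}), is to show that the fiber of $f_{\ul a}$ over a point of alpha type $\ul a$ has dimension exactly $|\ul a|$ when $\ul a$ is generic, and strictly less than $|\ul a|$ (in fact with irreducible components of dimensions that do not add up to $g$) when $\ul a$ is not generic. The heuristic: at an index $i$ with $a_i=1$, the image $(F,V)\ol M$ fails to fill the $\sigma_i$-component, so $F$ and $V$ together impose only a one-dimensional condition on $N^i$ rather than determining it, giving a $\bbP^1$'s worth of choices — contributing $1$ to the fiber dimension. When the alpha type is generic ($a_ia_{i+1}=0$), these $\bbP^1$-choices at the $|\ul a|$ "bad" spots are independent, so the fiber has dimension exactly $|\ul a|$, whence $\dim\calM_{\Gamma_0(p),\ul a}=(g-|\ul a|)+|\ul a|=g$. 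When two consecutive $a_i,a_{i+1}$ are both $1$, the compatibility $F\ol M^i\subset N^{i+1}$ links the choices at $i$ and $i+1$: the isotropy/stability conditions force these subspaces to be constrained relative to each other, so the local contribution drops below $1$ at one of the two spots, and the total fiber dimension is $<|\ul a|$, giving $\dim\calM_{\Gamma_0(p),\ul a}<g$.

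The main obstacle is the careful bookkeeping on the non-generic side: one must verify that the constraint linking consecutive bad indices genuinely reduces the fiber dimension on \emph{every} irreducible component of $\calM_{\Gamma_0(p),\ul a}$, not merely generically — since $\calM_{\Gamma_0(p),\ul a}$ need not be irreducible or equidimensional over $\calM_{\ul a}$, and the naive "sum of dimensions" bound could in principle be saturated on some bad component if the Kottwitz--Rapoport-type combinatorics conspired. I would handle this by exhibiting, for each non-generic $\ul a$, the explicit list of admissible local configurations $(N^i)_i$ as in the \dieu-theoretic analysis of \cite{yu:thesis}, reading off that the relation at a consecutive pair $a_i=a_{i+1}=1$ cuts the joint parameter space from dimension $2$ to dimension $1$, and concluding uniformly. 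Conversely, for generic $\ul a$ I would exhibit an explicit family showing the fiber attains dimension $|\ul a|$, so the bound $\dim\calM_{\Gamma_0(p),\ul a}\le g$ (which holds always, since $f$ is proper of relative dimension $0$ on the ordinary part and the total space has dimension $g$) is achieved. Together these give the "if and only if".
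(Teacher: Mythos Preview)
Your approach is exactly the paper's: identify the reduced fiber $f^{-1}(x)$ with a space $\calX_\tau$ of \dieu $\O$-submodules, embed it in $(\bfP^1)^g$, and prove $\dim\calX_\tau\le|\ul a|$ with equality iff $\ul a$ is generic, so that $\dim\calM_{\Gamma_0(p),\ul a}=(g-|\ul a|)+\dim\calX_\tau$ equals $g$ precisely then. Two small corrections are worth noting. First, the isotropy condition on $H$ is automatic once rank and $F,V$-stability are imposed, so no self-duality needs to be tracked separately. Second, your heuristic locates the free $\bfP^1$'s at the indices $i$ with $a_i=1$, but the explicit equations (Lemma~\ref{33}: $t_{i-1}s_i=0$ for $i\notin\tau$, $t_{i-1}t_i=0$ for $i\in\tau$) show the opposite---in a maximal-dimensional component the $\bfP^1$-factors sit at positions with $a_i=0$, one in each gap between consecutive elements of $\tau$, while a $\bfP^1$ at a position in $\tau$ already forces the dimension below $|\ul a|$; this is why $w(\ul a)$ is a product of gap-lengths. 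Once those equations are written down, every irreducible component of $\calX_\tau$ is visibly a product of points and $\bfP^1$'s, so the dimension bound is a direct count and your concern about controlling ``every irreducible component'' is handled automatically.
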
 

Since the moduli space $\calM_{\Gamma_0(p)}$ is equi-dimensional of
dimension $g$ \cite[Theorem 1, p.~407]{stamm}, each stratum
$\calM_{\Gamma_0(p), \ul a}$ has dimension less than or equal to
$g$. Theorem~\ref{22} asserts that the equality holds exactly when
  $\ul a$ is of generic type. 
An immediate consequence is the following

\begin{cor}\label{225}
  The image of the set of maximal points of $\calM_{\Gamma_0(p)}$ under the
  morphism $f$ (\ref{eq:211}) are exactly that of maximal points of
  all generic alpha strata $\calM_{\ul a}$.
\end{cor}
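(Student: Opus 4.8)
The plan is to establish the set-theoretic identity
\[
f\bigl(\{\text{maximal points of }\calM_{\Gamma_0(p)}\}\bigr)
=\bigcup_{\ul a\in\Delta^{\rm gen}}\{\text{maximal points of }\calM_{\ul a}\}
\]
by following the generic points of irreducible components through the stratified morphisms $f_{\ul a}$, using Theorem~\ref{22}, the Goren--Oort formula $\dim\calM_{\ul a}=g-|\ul a|$, the equidimensionality of $\calM_{\Gamma_0(p)}$ (dimension $g$, by Stamm), and the properness and surjectivity of $f$.

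First I would note that, by Stamm's equidimensionality result, a point of $\calM_{\Gamma_0(p)}$ is maximal exactly when it is the generic point of an irreducible component, and every such component has dimension $g$. Let $\xi$ be such a point; it lies in a unique locally closed stratum $\calM_{\Gamma_0(p),\ul a}$. Since a locally closed subset containing the generic point $\xi$ of the irreducible closed set $\ol{\{\xi\}}$ meets $\ol{\{\xi\}}$ in a dense open subset, one gets $g=\dim\ol{\{\xi\}}\le\dim\calM_{\Gamma_0(p),\ul a}\le g$. By Theorem~\ref{22} this forces $\ul a\in\Delta^{\rm gen}$, and it shows moreover that $\xi$ is a maximal point of the stratum $\calM_{\Gamma_0(p),\ul a}$, whose closure in the stratum is a dimension-$g$ irreducible component of $\calM_{\Gamma_0(p),\ul a}$.

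Next I would push down along $f$. As $\xi\in\calM_{\Gamma_0(p),\ul a}=\calM_{\Gamma_0(p)}\times_\calM\calM_{\ul a}$, the image $f(\xi)=f_{\ul a}(\xi)$ lies in $\calM_{\ul a}$, so $\dim\ol{\{f(\xi)\}}\le\dim\calM_{\ul a}=g-|\ul a|$. On the other hand, $f$ being proper restricts to a dominant morphism $\ol{\{\xi\}}\to\ol{\{f(\xi)\}}$ of integral finite-type $\ol\F_p$-schemes, whence $\dim\ol{\{\xi\}}=\dim\ol{\{f(\xi)\}}+\dim(\text{generic fibre})$, and the generic fibre is a closed subscheme of $f_{\ul a}^{-1}(f(\xi))$, the point $f(\xi)$ having alpha type exactly $\ul a$. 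Feeding in the bound $\dim f_{\ul a}^{-1}(x)\le|\ul a|$ for $x$ of generic type $\ul a$ -- which is proved in Section~3 by Dieudonn\'e theory and is already what the proof of Theorem~\ref{22} rests on -- one gets $g\le\dim\ol{\{f(\xi)\}}+|\ul a|$, hence $\dim\ol{\{f(\xi)\}}=g-|\ul a|$, i.e.\ $f(\xi)$ is a maximal point of $\calM_{\ul a}$; this is the inclusion ``$\subseteq$''. For ``$\supseteq$'', given a maximal point $w$ of $\calM_{\ul a}$ with $\ul a\in\Delta^{\rm gen}$, I would use that $f_{\ul a}$ is proper and surjective and that, by the same computation of Section~3, $f_{\ul a}^{-1}(w)$ has pure dimension $|\ul a|$: a component $V$ of $f_{\ul a}^{-1}\bigl(\ol{\{w\}}\cap\calM_{\ul a}\bigr)$ dominating the base component then satisfies $\dim V=(g-|\ul a|)+|\ul a|=g$, so its closure $\ol V$ in $\calM_{\Gamma_0(p)}$ is an irreducible closed set of dimension $g$, hence an irreducible component of $\calM_{\Gamma_0(p)}$ by equidimensionality; the generic point of $\ol V$ is a maximal point of $\calM_{\Gamma_0(p)}$ mapping onto $w$, because $f$ is closed and $f(V)$ is dense in $\ol{\{w\}}$.

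The only non-formal ingredient is the Dieudonn\'e-module computation of the fibres of the morphisms $f_{\ul a}$; this is precisely the content of Section~3 and is also what underlies Theorem~\ref{22} itself, so that -- granting it -- the statement really is immediate, the remainder being elementary dimension theory and the topology of Noetherian schemes. The point to take care with in writing it up is the bookkeeping with the locally closed strata -- that a maximal point of $\calM_{\Gamma_0(p)}$ lying in a stratum is automatically a maximal point of that stratum, and dually that a dimension-$g$ irreducible subset of a stratum closes up to a full irreducible component of $\calM_{\Gamma_0(p)}$ -- both of which follow from the elementary remark, used twice above, that a locally closed subset containing the generic point of an irreducible closed set meets it in a dense open subset.
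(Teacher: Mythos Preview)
Your argument is correct in structure and matches what the paper leaves implicit (the paper offers no proof beyond ``An immediate consequence is the following''), but there is one factual slip in the ``$\supseteq$'' direction. You assert that $f_{\ul a}^{-1}(w)$ has \emph{pure} dimension $|\ul a|$; this is false. Example~\ref{34}(2) exhibits a generic alpha type $\ul a=(1,0,1,0,0)$ for which $\calX_\tau$ has two components of dimension $2=|\ul a|$ and two components of dimension $1$, so the fibre is not equidimensional (this is also remarked right after Theorem~\ref{23}). Your deduction ``a component $V$ dominating the base then satisfies $\dim V=g$'' relies on purity and would fail for a dominating component whose generic fibre is one of the low-dimensional pieces.

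The fix is minor. By Lemma~\ref{32} the fibre over the generic point $w$ is (isomorphic to) $\calX_\tau$, hence has dimension exactly $|\ul a|$ by Proposition~\ref{35}(1). Every irreducible component $V_i$ of $f_{\ul a}^{-1}(\ol{\{w\}}\cap\calM_{\ul a})$ meeting the fibre over $w$ must dominate the base (otherwise $w\notin f(V_i)$), and the fibre over $w$ is the union of the generic fibres of the $V_i$. Since this union has dimension $|\ul a|$, at least one $V_i$ has generic fibre of dimension $|\ul a|$, hence $\dim V_i=(g-|\ul a|)+|\ul a|=g$, and the rest of your argument goes through for that component. Alternatively, you can quote the decomposition $\calM_{\Gamma_0(p),\ul a}=\bigcup_X\calM_X$ of Subsection~\ref{36}, where for each maximal-dimensional component $X$ of $\calX_\tau$ the map $\calM_X\to\calM_{\ul a}$ is surjective with every fibre isomorphic to $X$; this produces the required dimension-$g$ component over any maximal point $w$ directly.
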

 
For each alpha type  $\ul a=(a_i)\in \Delta$, denote by $\tau(\ul
a)\subset \Z/g\Z$ 
the subset consisting of elements $i$ such that
$a_i=1$. The subset $\tau(\ul a)$ is called the {\it alpha index}
corresponding to $\ul a$. 
Write $\tau(\ul a)=\{n_1,\dots, n_a\}$ with $0\le n_i <n_{i+1}<g$ and 
put $n_{a+1}=g+n_1$. Define a function $w:\Delta \to \Z$ by 
\begin{equation}
  \label{eq:221}
  w(\ul a):=w(\tau(\ul a)):=
  \begin{cases}
    2 & \text{if $\tau(\ul a)=\emptyset$}; \\
    \prod_{j=1}^a (n_{j+1}-n_j-1) & \text{otherwise}. 
  \end{cases}
\end{equation}
It is clear that $w(\ul a)>0$ if and only if $\ul a\in \Delta^{\rm
  gen}$.

\begin{thm}\label{23}
  Let $\ul a$ be a generic alpha type.

{\rm (1)} 
   For any point $x\in \calM_{\ul a}(k)$, the fiber $f^{-1}(x)$
  has $w(\ul a)$ irreducible components of dimension
   $|\ul a|$. 

{\rm (2)} The subscheme $\calM_{\Gamma_0(p),\ul a}$ has $w(\ul a)
  |\Pi_0(\calM_{\ul a})|$  irreducible components of dimension $g$. 

\end{thm}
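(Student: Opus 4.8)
The plan is to prove Theorem~\ref{23} by reducing everything to a local computation with Dieudonn\'e modules at a single point, and then globalizing via the flatness of $f$. First I would fix a point $x=(A,i,\iota,\eta)\in \calM_{\ul a}(k)$ with generic alpha type $\ul a$, and describe the fiber $f^{-1}(x)$ explicitly: a point of $f^{-1}(x)$ is a finite flat $O_F$-stable rank-$p^g$ subgroup scheme $H\subset A[p]$ which is maximally isotropic for the Weil pairing $e_{i(\lambda_0)}$. Passing to the covariant Dieudonn\'e $\O$-module $\ul M=(M,F,V)$ of $A[p]$ over $k$, such an $H$ corresponds to an $\O$-stable, totally isotropic $k$-subspace $N\subset M/pM$ of dimension $g$ that is stable under $F$ and $V$ (equivalently, a Lagrangian Dieudonn\'e submodule). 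Since $p$ is inert, $M/pM$ decomposes into its $g$ graded pieces $(M/pM)^i$, each of dimension $2$, with $F$ and $V$ shifting the grading by $\pm 1$; the alpha type records where $F$ (resp. $V$) is zero on the graded pieces. The enumeration of admissible $N$ then becomes a combinatorial problem on a cyclic quiver with $g$ vertices, and I expect the count of such $N$ — together with the scheme structure on the fiber (which I would handle by writing the relevant $N$ as $k$-points of a product of projective lines, the choices being forced on some pieces and a $\bbP^1$-worth on others) — to yield exactly $w(\ul a)$ irreducible components. The factor $\prod_j(n_{j+1}-n_j-1)$ should arise as the product, over the ``gaps'' between consecutive alpha indices, of the number of irreducible components (equivalently, the number of maximal admissible choices up to the continuous parameter) contributed by each run of graded pieces strictly between two alpha indices; the exceptional value $w=2$ for $\ul a=\emptyset$ (the ordinary locus) reflects the two Lagrangian subgroups $A[p]^\circ$ and a complementary \'etale one. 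This is where I would lean heavily on the Dieudonn\'e calculus of \cite{yu:thesis}, Sections~3--4.

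For the dimension count in part~(1), I would argue that each irreducible component of $f^{-1}(x)$ has dimension $|\ul a|$ as follows: on the $g-|\ul a|$ graded pieces indexed outside $\tau(\ul a)$ (where, by genericity, the alpha type forces the local structure to be ``ordinary-like'' at that spot), the subspace $N^i$ is rigidly determined, contributing nothing to the dimension, whereas on each of the $|\ul a|$ pieces indexed by $\tau(\ul a)$ there is a one-parameter family (a line in a $2$-dimensional space subject to the isotropy and $F$/$V$ constraints), contributing $1$ to the dimension; since genericity ($a_ia_{i+1}=0$) decouples these choices, the total is $|\ul a|$. I would make this precise by exhibiting $f^{-1}(x)$ as (a disjoint union of) closed subschemes of $(\bbP^1_k)^{|\ul a|}$, each of which is in fact all of $(\bbP^1_k)^{|\ul a|}$ or a blow-down thereof, using the explicit description of the Kottwitz--Rapoport/local model as in \cite{stamm}, \cite{deligne-pappas}; the key input is that over the generic stratum the $O_F\otimes k$-module $\Lie A$ is free of rank one, so the local model at $x$ degenerates only along the $|\ul a|$ ``bad'' embeddings.

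Part~(2) then follows by globalizing. The morphism $f:\calM_{\Gamma_0(p)}\to\calM$ is proper, and its restriction $f_{\ul a}:\calM_{\Gamma_0(p),\ul a}\to\calM_{\ul a}$ has, by part~(1), all fibers of the same dimension $|\ul a|$ with the same number $w(\ul a)$ of irreducible components; since $\calM_{\ul a}$ is smooth (hence reduced and normal) of pure dimension $g-|\ul a|$ by the Goren--Oort results quoted above, and $f_{\ul a}$ is proper with equidimensional fibers, $f_{\ul a}$ is flat, so every irreducible component of $\calM_{\Gamma_0(p),\ul a}$ dominates $\calM_{\ul a}$ and has dimension $(g-|\ul a|)+|\ul a|=g$, confirming the dimension assertion of Theorem~\ref{22} as well. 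To count components, I would note that the construction of the $w(\ul a)$ components of the fiber is done uniformly in $x$ — i.e. it comes from a decomposition of $\calM_{\Gamma_0(p),\ul a}$ itself into $w(\ul a)$ pieces over each connected component of $\calM_{\ul a}$, each piece being a $(\bbP^1)^{|\ul a|}$-bundle (or a modification thereof) hence irreducible over an irreducible base. Since $|\Pi_0(\calM_{\ul a})|$ counts the connected/irreducible components of the normal base $\calM_{\ul a}$, the total is $w(\ul a)\,|\Pi_0(\calM_{\ul a})|$. The main obstacle I anticipate is making the combinatorial enumeration of admissible Dieudonn\'e Lagrangians both correct and \emph{uniform in $x$} — i.e. checking that the component structure of $f^{-1}(x)$ does not jump as $x$ varies within $\calM_{\ul a}$, so that the fiberwise count assembles into a genuine product decomposition of $\calM_{\Gamma_0(p),\ul a}$; genericity of $\ul a$ is exactly what should prevent such jumping, but verifying this requires carefully tracking the $F,V$-action on the universal Dieudonn\'e module over $\calM_{\ul a}$ rather than just at a single point.
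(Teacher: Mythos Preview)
Your overall strategy---describe the fiber via Dieudonn\'e modules as a closed subscheme of a product of $\bfP^1$'s, count its maximal components combinatorially, then globalize---matches the paper's. But there are two concrete errors that would break the argument as written.

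First, your heuristic for where the free $\bfP^1$-parameters live is inverted. You say the subspace $N^i$ is rigidly determined for $i\notin\tau(\ul a)$ and carries a one-parameter family for $i\in\tau(\ul a)$. In fact the paper embeds the fiber (equivalently, the space $\calX_\tau$ of admissible submodules) in the full $(\bfP^1)^g$, cut out by equations $t_{i-1}s_i=0$ for $i\notin\tau$ and $t_{i-1}t_i=0$ for $i\in\tau$ (Lemma~\ref{33}); the analysis of its irreducible components (Proposition~\ref{35}) shows that on a top-dimensional component the $\bfP^1$'s sit at positions strictly \emph{between} consecutive elements of $\tau$, and that placing a $\bfP^1$ at a position in $\tau$ forces a drop in dimension. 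Example~\ref{34}(2) already makes this visible. So the product $\prod_j(n_{j+1}-n_j-1)$ counts, for each gap, the number of positions \emph{in the gap} at which the free $\bfP^1$ can be placed---opposite to your picture.

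Second, and more seriously for part~(2), the fibers of $f_{\ul a}$ are \emph{not} equidimensional in general, even for generic $\ul a$: in Example~\ref{34}(2) with $\ul a=(1,0,1,0,0)$ and $|\ul a|=2$, the fiber $\calX_\tau$ has two $2$-dimensional components and two $1$-dimensional ones. So your flatness argument (``proper with equidimensional fibers over a smooth base, hence flat'') fails, and with it the clean $(\bfP^1)^{|\ul a|}$-bundle description. The paper instead argues directly that each top-dimensional irreducible component $X\subset\calX_\tau$ is \emph{canonical}, i.e.\ invariant under any automorphism of $\ol M_0$ arising from a change of basis: such an automorphism acts coordinatewise on $(\bfP^1)^g$, and $X$ is determined by the pattern of which factors equal $\bfP^1$ via property~(i) in the proof of Proposition~\ref{35}. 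This makes $\calM_X:=\{y:\xi_{f(y)}(y)\in X\}$ a well-defined closed subvariety of $\calM_{\Gamma_0(p),\ul a}$ with $f_{\ul a}|_{\calM_X}$ proper surjective onto $\calM_{\ul a}$ and all fibers isomorphic to $X$, giving $\Pi_0(\calM_X)\simeq\Pi_0(\calM_{\ul a})$ without any flatness of $f_{\ul a}$ itself. You flagged the uniformity issue at the end, but the resolution is this canonicity argument rather than flatness.
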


We remark that the fiber $f^{-1}(x)$ is not equi-dimensional in
general; see Example~\ref{34}. Similarly, some
component of $\calM_{\Gamma_0(p),\ul a}$ may has dimension less than $g$.
By Theorems~\ref{22} and \ref{23} (2), we get
\begin{equation}
  \label{eq:222}
 |\Pi_0(\calM_{\Gamma_0(p)})|=\sum_{\ul a\in \Delta^{\rm gen}}
   w(\ul a) |\Pi_0(\calM_{\ul a})|.      
\end{equation}

In the next step we consider the $\ell$-adic Hecke correspondences
operating on the set $\Pi_0(\calM_{\ul a})$ of irreducible 
components, where $\ell\not=p$ is a prime.

For any non-negative integer $m\ge 0$, let $\calH_{\ell,m}$ be the
moduli space over $\Fpbar$ that parametrizes equivalence classes of
objects $(\ul A_j=(A_{j},i_{j},\iota_{j},\eta_{j}),
 j=1,2,3; \varphi_1, \varphi_2)$ as the diagram
\[ \begin{CD}
  \ul A_1@<\varphi_1<< \ul A_3 @>\varphi_2>> \ul A_2,
\end{CD} \]
where 


\begin{itemize}
\item $\ul A_1$ and $\ul A_2$ are objects in $\calM$, and $\ul A_3$ is
  a $g$-dimensional abelian $O_F$-variety with a class of polarizations and 
  a symplectic level-$n$ structure as defined in
  Section~\ref{sec:01} but the condition (\ref{eq:101}) is not
  required; 

\item the morphisms $\varphi_1$ and $\varphi_2$ are $O_F$-linear
  isogenies of degree 
  $\ell^m$ such that $(\varphi_{j})^* i_{j}=i_3$ (pull back the
  polarizations) and $(\varphi_j)_* \eta_3= \eta_j$ (pushfoward the
  level structures) for $j=1,2$. 
\end{itemize}



Let $\calH_\ell:=\cup_{m\ge 0} \calH_{\ell,m}$. An $\ell$-adic Hecke
correspondence is given by an irreducible 
component $\calH$ of $\calH_\ell$ together with natural 
projections $\rm{pr}_1$ and $\rm{pr}_2$. A subset $Z$ of $\calM$ is
called {\it $\ell$-adic Hecke invariant} if $\rm{pr}_2
(\rm{pr}_1^{-1}(Z))\subset Z$ for any $\ell$-adic Hecke correspondence
$(\calH,\rm{pr}_1,\rm{pr}_2)$. If $Z$ is an $\ell$-adic Hecke invariant,
locally closed subset of $\calM$, then the $\ell$-adic Hecke
correspondences induce 
correspondences on the set $\Pi_0(Z)$ of irreducible components. We
call $\Pi_0(Z)$ {\it $\ell$-adic Hecke transitive} if the $\ell$-adic
Hecke correspondences operate transitively on $\Pi_0(Z)$. 
The discrete Hecke problem for any $\ell$-adic Hecke invariant
subscheme $Z$ is asking whether $\Pi_0(Z)$ is $\ell$-adic Hecke
transitive.  

\begin{thm}[Chai]\label{24}
Let $Z$ be an $\ell$-adic Hecke invariant subscheme of
$\calM$. If the set $\Pi_0(Z)$ is $\ell$-adic Hecke transitive and maximal
points of $Z$ are not supersingular, then $Z$ is irreducible.
\end{thm}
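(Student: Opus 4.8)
The plan is to reduce the statement, by a descent along the prime‑to‑$p$ level tower, to Chai's theorem that the $\ell$‑adic monodromy of a Hecke‑invariant subvariety with non‑supersingular generic point is large. First I would replace $Z$ by its underlying reduced subscheme (this changes neither $\Pi_0(Z)$ nor the Hecke‑invariance), and set $G:=\Res_{F/\Q}\SL_2$, the simply‑connected cover of the derived group of the group attached to $\calM$, so that $G(\Z_\ell)=\SL_2(O_F\otimes\Z_\ell)$ and $G(\Q_\ell)=\prod_{v\mid \ell}\SL_2(F_v)$. For $m\ge 0$ let $\calM^{(\ell^m)}\to\calM$ be the cover, Galois with group $\Gamma_m:=G(\Z/\ell^m)$, obtained by adding a full $O_F$‑linear symplectic level‑$\ell^m$ structure; since $\ell\ne p$ this is finite étale (indeed a $\Gamma_m$‑torsor, as $n\ge 3$ rigidifies) over all of $\calM$. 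Passing to $\calM^{(\ell^\infty)}:=\varprojlim_m\calM^{(\ell^m)}$, the group $G(\Q_\ell)$ acts by automorphisms extending the deck action of $G(\Z_\ell)=\varprojlim\Gamma_m$, and these automorphisms realize the $\ell$‑adic Hecke correspondences. Applying the hypothesis $\mathrm{pr}_2(\mathrm{pr}_1^{-1}(Z))\subset Z$ to a correspondence and to its opposite gives $\mathrm{pr}_1^{-1}(Z)=\mathrm{pr}_2^{-1}(Z)$, so the preimage $Z_\infty\subset\calM^{(\ell^\infty)}$ of $Z$ is $G(\Q_\ell)$‑stable, and a routine translation identifies ``$\Pi_0(Z)$ is $\ell$‑adic Hecke transitive'' with ``$G(\Q_\ell)$ acts transitively on $\Pi_0(Z_\infty)$'' (compatibly with the canonical surjection $\Pi_0(Z_\infty)\twoheadrightarrow\Pi_0(Z)$).

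The essential input is Chai's monodromy theorem, which I would invoke in the form proved in Section~\ref{sec:06}: for every irreducible component $W$ of $Z$ — and each such $W$ has non‑supersingular generic point, since the maximal points of $Z$ are precisely the generic points of its components — the image $H_W$ of the $\ell$‑adic monodromy $\pi_1(W^{\mathrm{sm}})\to G(\Z_\ell)$ is open (Chai shows more, but openness already suffices). Granting this, the fibre of $\Pi_0(Z_\infty)\to\Pi_0(Z)$ over $W$ is identified with $H_W\backslash G(\Z_\ell)$, which is finite because $H_W$ is open, hence closed; so $\Pi_0(Z_\infty)$ is finite. By the transitivity hypothesis $\Pi_0(Z_\infty)\cong G(\Q_\ell)/S$ with $[G(\Q_\ell):S]=|\Pi_0(Z_\infty)|<\infty$, and $S$ is open because $S\cap G(\Z_\ell)$ is a conjugate of the open subgroup $H_W$. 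But each factor $\SL_2(F_v)$ is generated by unipotent subgroups isomorphic to the divisible group $(F_v,+)$, hence has no proper subgroup of finite index, and therefore neither does $G(\Q_\ell)$; thus $S=G(\Q_\ell)$, so $|\Pi_0(Z_\infty)|=1$, and consequently $|\Pi_0(Z)|=1$, i.e.\ $Z$ is irreducible.

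The main obstacle is Chai's monodromy theorem itself — the openness of $H_W$. This is exactly where non‑supersingularity is used in an essential way: it guarantees that $W^{\mathrm{sm}}$ meets the locus where the underlying $p$‑divisible group is not supersingular, so that, following \cite{chai:mono}, one can exploit the abundance of Hecke correspondences stabilizing $W$ to spread out local monodromy by a going‑up/going‑down argument, reduce to the structure near an ordinary (or suitably generic) point where $\SL_2$‑type monodromy is visible, and then apply Goursat/Larsen--Pink‑type rigidity for subgroups of $G(\Z_\ell)$ to force openness — carrying out, as emphasized in \cite{chai:mono}, the reduction to the simply‑connected cover of the derived group. Besides this, I would need to check the routine points tacitly used above: that $\calM^{(\ell^m)}\to\calM$ is a $\Gamma_m$‑torsor over all of $\calM$ carrying a genuine $G(\Q_\ell)$‑action at infinite level which induces the $\ell$‑adic Hecke correspondences, that $\Pi_0(Z_m)$ decomposes into blocks indexed by $\Pi_0(Z)$ compatibly with the $\Gamma_m$‑action, and that these blocks pass correctly to the inverse limit; all of these are standard and I would dispatch them briefly.
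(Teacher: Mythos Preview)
Your proposal is correct and follows essentially the same route as the paper's proof in Section~\ref{sec:06} (Theorem~\ref{611}): pass to the pro-$\ell$ tower, use openness of the monodromy image for a component with non-supersingular generic point (Lemma~\ref{68}) to get finiteness of $\pi_0(\wt Z)$, and finish via the absence of proper finite-index subgroups in $G(\Q_\ell)$ (you argue this directly from unipotent generation, the paper cites Kneser--Tits as Lemma~\ref{69}). Your informal sketch of \emph{why} the monodromy is open (going-up/going-down, Larsen--Pink rigidity near an ordinary point) does not match the paper's actual mechanism, which goes through Zarhin's theorem, Grothendieck's CM result, and normality of the algebraic envelope (Lemmas~\ref{66}--\ref{68}); but since you explicitly defer this step to Section~\ref{sec:06}, the discrepancy is harmless.
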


Notice that the formulation of Theorem~\ref{24} does not require
our assumption on $p$ and Theorem~\ref{24} remains valid
without this assumption (see Section~\ref{sec:06}). 

Since the alpha type is an invariant under isogenies of prime-to-$p$
degree, each alpha stratum $\calM_{\ul a}$ is $\ell$-adic Hecke
invariant. The following result is due to Goren and Oort
\cite[Corollary 4.2.4]{goren-oort}  

\begin{thm}\label{241}
  For any alpha type $\ul a$, the set $\Pi_0(\calM_{\ul a})$ is
  $\ell$-adic Hecke transitive.   
\end{thm}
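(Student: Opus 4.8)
The plan is to prove $\ell$-adic Hecke transitivity of $\Pi_0(\calM_{\ul a})$ by reducing to a transitivity statement on the ordinary (non-supersingular) points of the stratum, and then invoke the classical results on irreducibility of Hilbert--Blumenthal modular varieties together with Ribet's monodromy theorem. First I would recall that each alpha stratum $\calM_{\ul a}$ is smooth and quasi-affine of pure dimension $g - |\ul a|$, so every irreducible component is open in its closure; hence to act by Hecke correspondences on $\Pi_0(\calM_{\ul a})$ it suffices to track how the prime-to-$p$ isogenies permute the finitely many components. Because the alpha type is a prime-to-$p$ isogeny invariant, $\calM_{\ul a}$ is $\ell$-adic Hecke stable, and the correspondences indeed descend to $\Pi_0(\calM_{\ul a})$.

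Next I would exploit the geometry of the strata. The maximal stratum $\calM_{(0,\dots,0)}$ is the ordinary locus, which is open dense in $\calM$; by Ribet's $p$-adic monodromy theorem \cite{ribet:hmf} the $p$-adic monodromy of the ordinary locus is as large as possible, and since $\calM$ itself is geometrically irreducible (Theorem of Rapoport, Deligne--Pappas in the excerpt), $\Pi_0(\calM_{(0,\dots,0)})$ is a singleton and there is nothing to prove. For a general $\ul a$, the key point is that $\calM_{\ul a}$ lies in the closure of the next-larger strata in a controlled way, and one has enough freedom in choosing the isogeny $\varphi_1 \circ \varphi_2^{-1}$ of $\ell$-power degree to move between any two given components. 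Concretely, I would pick a geometric point on each component, use the density of the locus where the $O_F$-module structure is ``as generic as possible within the stratum,'' and then apply the irreducibility of a suitable auxiliary Hilbert--Blumenthal variety with added $\ell^m$-level structure: such a variety surjects onto products of the form $\calM_{\ul a} \times \calM_{\ul a}$ through the two projections, and its irreducibility forces the Hecke graph on $\Pi_0(\calM_{\ul a})$ to be connected.

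The main obstacle will be showing that the correspondence variety $\calH_{\ell, m}$, restricted over the stratum $\calM_{\ul a}$ on both sides, is \emph{large enough} to connect all components — equivalently, that one can realize sufficiently many $\ell$-power isogenies compatible with the $O_F$-action and preserving the alpha type. I expect this to come down to a local computation with Dieudonn\'e modules over $\O = O_F \otimes \Z_p$: one must check that the Hecke orbit of a point in $\calM_{\ul a}(k)$ under prime-to-$p$ isogenies already meets every component, which follows once the $\ell$-adic monodromy of the universal $\ell$-adic Tate module over (an irreducible cover of) the stratum is shown to surject onto an appropriate open subgroup of $\mathrm{GL}_2(O_F \otimes \Z_\ell)$. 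For this I would lean on the corresponding $\ell$-adic monodromy/irreducibility statement for the Hilbert--Blumenthal variety over $\Z_{(p)}[\zeta_n]$ and transport it to the special fiber.

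Finally, I would assemble these pieces: having shown the Hecke correspondences act transitively on $\Pi_0(\calM_{\ul a})$, Theorem~\ref{241} follows. I note that this is exactly the content of \cite[Corollary 4.2.4]{goren-oort}, so in the write-up I would either cite that result directly or reproduce the short argument above, whichever the exposition demands; the honest path here is simply to invoke Goren--Oort, since the transitivity is established there by precisely a monodromy-plus-irreducibility argument of the kind sketched.
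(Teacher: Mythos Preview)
Your sketch takes the wrong direction and does not match the actual argument in Goren--Oort (reproduced in this paper as the proof of Theorem~\ref{52} via Proposition~\ref{53}). The correct proof goes \emph{down} to the superspecial locus, not \emph{up} from the ordinary one: (i) using quasi-affineness of each $\calM_{\ul a}$ together with properness of the non-ordinary locus, the closure $\ol W$ of every irreducible component $W$ of $\calM_{\ul a}$ contains a superspecial point; (ii) since $\ol{\calM}_{\ul a}$ is smooth, each superspecial point lies in $\ol W$ for a \emph{unique} component $W$; (iii) the superspecial locus is a single $\ell$-adic Hecke orbit, by its double-coset description and strong approximation for the associated inner form (Proposition~\ref{53}(5) and (\ref{eq:521})). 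Together (i)--(iii) yield a surjective $\ell$-adic Hecke equivariant map from the set of superspecial points onto $\Pi_0(\calM_{\ul a})$, and transitivity follows.

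Your route via $\ell$-adic monodromy is circular in the logic of this paper: large monodromy over a stratum is \emph{deduced from} Hecke transitivity via Chai's theorem (Theorem~\ref{24} and Section~\ref{sec:06}), not used to establish it. Knowing that the monodromy image over one component is open in $G(\Z_\ell)$ tells you nothing about how correspondences link \emph{distinct} components; it only controls the tower of $\ell$-power level covers sitting over that single component. Likewise, the ``irreducibility of a suitable auxiliary Hilbert--Blumenthal variety surjecting onto $\calM_{\ul a}\times\calM_{\ul a}$'' is precisely the assertion at stake, not an available input. The appeal to Dieudonn\'e calculus is also misplaced here: prime-to-$p$ isogenies leave the Dieudonn\'e module unchanged, so no local computation at $p$ can supply the missing bridge between components. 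Finally, your closing description of \cite[Corollary~4.2.4]{goren-oort} as a ``monodromy-plus-irreducibility argument'' is inaccurate; it is a superspecial-locus-plus-strong-approximation argument, as above. Citing Goren--Oort directly is correct and is exactly what the paper does at Theorem~\ref{241}; just characterize its content accurately.
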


An alpha stratum $\calM_{\ul a}$ is called {\it supersingular} 
if it is contained in the supersingular locus. 
This is equivalent to that all of its maximal points are supersingular. 
An alpha type $\ul a$ is called {\it supersingular} if the
corresponding stratum $\calM_{\ul a}$ is so. It follows from
Theorems~\ref{24} and~\ref{241} that

\begin{cor}\label{242}
  Any non-supersingular stratum $\calM_{\ul a}$ is irreducible. 
\end{cor}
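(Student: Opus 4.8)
The plan is to deduce Corollary~\ref{242} directly by combining the two preceding results, \thmref{24} (Chai's theorem) and \thmref{241} (Goren--Oort transitivity). First I would fix a non-supersingular alpha stratum $\calM_{\ul a}$ and record that, as observed just before \thmref{241}, the alpha type is an isogeny invariant of prime-to-$p$ degree, so $\calM_{\ul a}$ is an $\ell$-adic Hecke invariant locally closed subscheme of $\calM$; this is the hypothesis needed to even form the induced correspondences on $\Pi_0(\calM_{\ul a})$. The definition of supersingular stratum recalled in the text says that $\calM_{\ul a}$ is supersingular precisely when all its maximal points are supersingular, so the assumption that $\calM_{\ul a}$ is non-supersingular is exactly the statement that at least one, hence (by Hecke transitivity on components together with the fact that supersingularity is constant on each irreducible component) every maximal point of $\calM_{\ul a}$ is non-supersingular.

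Next I would verify the second hypothesis of \thmref{24}: that the maximal points of $Z=\calM_{\ul a}$ are not supersingular. This is immediate from the previous paragraph and the definition. The first hypothesis of \thmref{24}, that $\Pi_0(\calM_{\ul a})$ is $\ell$-adic Hecke transitive, is supplied verbatim by \thmref{241}. With both hypotheses in hand, \thmref{24} applied to $Z=\calM_{\ul a}$ yields that $\calM_{\ul a}$ is irreducible, which is the assertion of the corollary.

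The only point requiring a word of care — and the step I would flag as the potential obstacle — is the compatibility between ``$\calM_{\ul a}$ is non-supersingular'' (defined as ``$\calM_{\ul a}$ is not contained in the supersingular locus'') and ``the maximal points of $\calM_{\ul a}$ are not supersingular'' (the hypothesis literally appearing in \thmref{24}). One needs that if one maximal point is non-supersingular then the hypothesis of \thmref{24} is met; but in fact the text already states these two conditions are equivalent (``This is equivalent to that all of its maximal points are supersingular''), so the supersingular locus being closed and the stratum being quasi-affine of pure dimension $g-|\ul a|$ guarantee that a stratum not contained in the supersingular locus has a dense open non-supersingular subset, whence none of its maximal points is supersingular. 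No further computation is needed; the corollary is a formal consequence, and I would present it as a two-line proof: $\calM_{\ul a}$ is $\ell$-adic Hecke invariant and, being non-supersingular, its maximal points are not supersingular, so \thmref{241} and \thmref{24} give the irreducibility.
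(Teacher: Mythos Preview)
Your proposal is correct and follows exactly the paper's approach: the paper simply writes ``It follows from Theorems~\ref{24} and~\ref{241}'' and states the corollary, and you have spelled out precisely this deduction. One small caveat: in your final paragraph the closedness-of-the-supersingular-locus argument alone does not show that \emph{all} maximal points are non-supersingular (a reducible stratum could a priori have some supersingular and some non-supersingular components), but your earlier argument via Hecke transitivity of $\Pi_0(\calM_{\ul a})$ together with the isogeny-invariance of the Newton polygon already handles this correctly, so the proof stands.
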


 
\subsection{}\label{25}
It remains to treat the supersingular contribution in
(\ref{eq:222}). In the following we 
describe all supersingular strata $\calM_{\ul a}$, not just for
generic ones. This is slightly more than what we need. 

For $j=g/2$ or a non-negative integer with $0\le j\le g/2$, write
$s(j,g)$ for the slope sequence 
\[ \left \{\frac{j}{g},\dots,\frac{j}{g},
\frac{g-j}{g}\dots,\frac{g-j}{g}\right \} \] 
with each multiplicity $g$. The set of all such $s(j,g)$'s is
the set of slope sequences (or Newton polygons) which are realized by
points in $\calM$ (see \cite[Lemma 3.1 and Theorem
7.4]{yu:reduction}).  

We recall the following result in Goren and Oort \cite[Theorem
5.4.11]{goren-oort}. 

\begin{thm}\label{255}
  The generic point of each component of $\calM_{\ul a}$ has slope
  sequence $s(\lambda(\ul a),g)$ except when 
  $g$ is odd and $|\ul a|=g$, where
\[  \lambda(\ul a):=\{\, |\ul b|\,;\, \ul a \preceq \ul b \quad
  \text{and $\ul b$ is generic} \}. \]
\end{thm}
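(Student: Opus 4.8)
\medbreak\noindent\textit{Proof plan.}\enspace The plan is to compute the slope sequence of a generic point of $\calM_{\ul a}$ directly from its covariant \dieu $\O$-module. Since $p$ is inert, $\O=O_F\otimes\Z_p$ is the ring of integers of the unramified extension of $\Q_p$ of degree $g$, so $\O\otimes_{\Z_p}W\cong\prod_{i\in\Z/g\Z}W$ and the \dieu module of a point of $\calM(k)$ splits as $M=\bigoplus_{i\in\Z/g\Z}M_i$ with each $M_i$ free of rank two over $W$, the operators $F\colon M_i\to M_{i+1}$ and $V\colon M_{i+1}\to M_i$ being semilinear with $FV=VF=p$. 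The Rapoport condition gives $pM_i\subseteq FM_{i-1}$ and $pM_i\subseteq VM_{i+1}$, with $L_i^F:=FM_{i-1}/pM_i$ and $L_i^V:=VM_{i+1}/pM_i$ both lines in $M_i/pM_i\cong k^2$; moreover $L_i^V$ is the kernel of $F\bmod p$ out of $M_i$ and $L_{i+1}^F$ is its image, and one has $a_i=1$ precisely when $L_i^F=L_i^V$, so the alpha index $\tau(\ul a)$ records the slots where these two lines collide. The Newton polygon is then read off from the $\sigma^g$-linear operator $F^g$ on a single $M_i$: each $F\colon M_{i-1}\to M_i$ has elementary divisors $1,p$ (again by Rapoport), so $F^g$ has determinant of $p$-adic valuation $g$, its two slopes $\mu_1\le\mu_2$ satisfy $\mu_1+\mu_2=g$, and the slope sequence of the point equals $s(\mu_1,g)$. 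Thus everything reduces to showing $\mu_1=\lambda(\ul a)$ at a generic point, where $\lambda(\ul a)$ (the maximum of the set in the statement) is the largest size of a spaced subset of $\tau(\ul a)\subset\Z/g\Z$, i.e.\ $\sum_m\lceil r_m/2\rceil$ over the maximal runs of consecutive elements $r_m$ of $\tau(\ul a)$ when $\tau(\ul a)\neq\Z/g\Z$, and $\lfloor g/2\rfloor$ when $\tau(\ul a)=\Z/g\Z$.

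The second step is the combinatorial computation of $\mu_1$, which uses the genericity. Since $F^g$ has rank two, $\mu_1=\min_v v_p(F^gv)$ over primitive $v\in M_i$, and the Newton polygon does not depend on the chosen slot $i$; taking $i$ to be a free slot (one with $i\notin\tau(\ul a)$, which exists when $\tau(\ul a)\neq\Z/g\Z$) and $v\in M_i$ generic primitive, one traces $v$ once around the cycle $\Z/g\Z$ under $F$ and counts how often a factor of $p$ must be extracted. This happens at the step $M_j\to M_{j+1}$ exactly when the vector arriving at $M_j$ lies on the kernel line $L_j^V$; a short check shows that immediately after an extraction the next vector is transverse to the following kernel line, so within a maximal run of $r$ consecutive special slots a $p$ comes out at every other step, $\lceil r/2\rceil$ times, while nothing comes out over free slots. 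Summing over runs gives $v_p(F^gv)=\lambda(\ul a)$ for generic $v$, and visibly more for special $v$, hence $\mu_1=\lambda(\ul a)$. Genericity enters precisely through the assumption that at a generic point of a component of $\calM_{\ul a}$ no free slot carries an unexpected collision $L_j^F=L_j^V$; this is exactly the content of the fact, due to Goren--Oort \cite{goren-oort} and \cite[\S3]{yu:thesis}, that $\calM_{\ul a}$ is smooth and quasi-affine of dimension $g-|\ul a|$, the parameters being the line positions at the free slots. Since the Newton polygon is an invariant of prime-to-$p$ isogeny and $\Pi_0(\calM_{\ul a})$ is $\ell$-adic Hecke transitive (Theorem~\ref{241}), the same value is attained at the generic point of every component, and it is constant on a dense open subscheme by the Grothendieck--Katz specialization theorem.

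There remains the case $\tau(\ul a)=\Z/g\Z$, i.e.\ $|\ul a|=g$, where $\calM_{\ul a}$ is a finite set of points. For $g$ even the cyclic count of the previous step closes up consistently around the whole circle and gives $\mu_1=g/2=\lambda(\ul a)$, so the slope sequence is the supersingular one $s(g/2,g)$, consistent with the formula. For $g$ odd there is no free slot to anchor the alternating extraction and its parity around the cycle is obstructed; one checks directly that every such point is supersingular (e.g.\ by that parity argument, or because the slope is forced to be symmetric and hence equal to $1/2$), so the slope sequence is $s(g/2,g)$ rather than $s((g-1)/2,g)=s(\lambda(\ul a),g)$, which is the stated exception. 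I expect the one genuinely delicate ingredient to be the genericity input of the second paragraph: a priori the polarization and the $O_F$-action might conspire to force a collision at some free slot, making every point of $\calM_{\ul a}$ more special than the parameter count predicts, so one really needs the fine local structure theory of the alpha (Ekedahl--Oort) strata; granting that, the Newton polygon computation is elementary linear algebra over $W$.
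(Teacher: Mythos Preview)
The paper does not prove Theorem~\ref{255}; it is quoted from Goren--Oort \cite[Theorem~5.4.11]{goren-oort}, so there is no in-paper argument to compare against. Your plan is a direct \dieu-module computation, which is the natural approach.

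There is, however, a genuine gap. The assertion ``$\mu_1=\min_v v_p(F^gv)$ over primitive $v\in M_i$'' is not correct: the right-hand side is the first \emph{Hodge} slope $h_1$ of the lattice $M_i$ for the $\sigma^g$-linear map $F^g$, not the Newton slope. Mazur's inequality gives only $\mu_1\ge h_1$, so your trace count yields at best $\mu_1\ge\lambda(\ul a)$ on $\calM_{\ul a}$. Concretely, for $g=2$, $\ul a=(1,0)$ with $Fe_0=e_1$, $Ff_0=pf_1$, $Fe_1=f_0$, $Ff_1=pe_0$, one finds $F^2e_0=f_0$ at the special slot $M_0$, so $\min_v v_p(F^2v)=0$ there while $\mu_1=1$; your choice of a free slot happens to give the right number in this example, but not via the minimum formula. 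The substantive half of the theorem is the \emph{upper} bound $\mu_1\le\lambda(\ul a)$ at the generic point, and a single pass around the cycle does not supply it.

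You also misplace where genericity enters. Your sentence ``no free slot carries an unexpected collision $L_j^F=L_j^V$'' is tautological: that is the definition of the stratum. The real issue is what happens upon \emph{exiting an odd-length special run}. After the final extraction you arrive at a free slot $j$ with a vector outside $L_j^F$, but whether it lies on the distinct line $L_j^V$ is not determined by the alpha type --- it depends on the moduli parameters. Only if it is transverse there does the pattern repeat, giving $v_p((F^g)^nv)=n\lambda(\ul a)$ for all $n$ and hence $\mu_1\le\lambda(\ul a)$. Establishing this transversality at the generic point requires the explicit normal forms for the display (as in \cite{goren-oort} or \cite[\S4]{yu:thesis}); alternatively one constructs, for each generic $\ul b\succeq\ul a$ with $|\ul b|=\lambda(\ul a)$, an explicit point of $\calM_{\ul a}$ with Newton polygon $s(\lambda(\ul a),g)$ and invokes Grothendieck's semicontinuity. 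Either way, that construction is the step your plan acknowledges as ``delicate'' but does not actually carry out, and it is precisely the content of the theorem.
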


In the except case where 
$g$ is odd and $|\ul a|=g$, the alpha stratum $\calM_{\ul a}$ is
the superspecial locus. 
One can characterize easily from Theorem~\ref{255} when an alpha
stratum $\calM_{\ul a}$ is supersingular. When $\ul a$ is of generic
type, the stratum $\calM_{\ul a}$ is supersingular if and only if 
$g=2d$ is even and
$|\ul a|=d$.
They correspond to alpha types
$\ul a=(1,0,\dots,1,0)$ and $\ul a=(0,1,\dots,0,1)$. See
Subsection~\ref{42} for the case of general alpha types. 
\\

Choose and fix a non-zero element
$\lambda_0$ in $L^+$ so that $(|L/O_F\lambda_0|,np)=1$. Let $x$ be any
point in $\calM_{(L,L^+),n}(\C)$.   
One associates a skew-Hermitian $O_F$-module
$H_1(A_x(\C),\Z)$ to $(A_x,i_x(\lambda_0),\iota_x)$. The isomorphism
class of the skew-Hermitian $O_F$-module $H_1(A_x(\C),\Z)$ only
depends on the moduli space $\calM_{(L,L^+),n}$, which we write
$(V_{\Z},\<\,,\>,\iota)$. Let $G$ be
the automorphism group scheme over $\Z$ associated to 
the skew-Hermitian $O_F$-module $(V_{\Z},\<\,,\>,\iota)$, and
$\Gamma(n)$ be the kernel of the reduction map $G(\Z)\to G(\Z/n\Z)$. 
One has the complex 
uniformization
\[ \calM_{(L,L^+),n}(\C)\simeq \Gamma(n)\backslash G(\R)/
SO_2(\R)^g. \]

\begin{thm}\label{26}
  Let $\calM_{\ul a}$ be a supersingular stratum.

{\rm (1)} If $g$ is odd, then $\calM_{\ul a}$ consists of all
superspecial points and 
\begin{equation}\label{eq:251}
  |\calM_{\ul a}(k)|=
  [G(\Z):\Gamma(n)]\cdot   
  \left [ \frac{-1}{2} \right ]^g \cdot \zeta_F(-1)\cdot (p^g-1).
\end{equation}

{\rm (2)} If $g$ is even and $|\ul a|=g$, then $\calM_{\ul a}$
consists of all superspecial points and 
\begin{equation}\label{eq:252}
  |\calM_{\ul a}(k)|=
  [G(\Z):\Gamma(n)]\cdot   
  \left [ \frac{-1}{2} \right ]^g \cdot \zeta_F(-1)\cdot (p^g+1).
\end{equation} 
{\rm (3)} If $g$ is even and $|\ul a|\not= g$, then any irreducible
component of $\ol{\calM}_{\ul a}$ is isomorphic to $(\bfP^1)^{g-|\ul a|}$
and 
\begin{equation}\label{eq:253}
  |\Pi_0(\calM_{\ul a})|=
  [G(\Z):\Gamma(n)]\cdot   
  \left [ \frac{-1}{2} \right ]^g \cdot \zeta_F(-1).
\end{equation} 
\end{thm}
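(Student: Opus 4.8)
The plan is to reduce Theorem~\ref{26} to the geometric mass formula of \cite{yu:mass_hb} combined with the classification of supersingular alpha strata coming from Theorem~\ref{255} and its refinement in Subsection~\ref{42}. First I would recall that a supersingular stratum $\calM_{\ul a}$ is, by definition, one all of whose maximal points are supersingular; combining with Theorem~\ref{255} this forces, when $g$ is odd, $|\ul a|=g$ (the superspecial locus, which is a finite set of points), and when $g$ is even, it forces the generic alpha type $\ul a$ to satisfy $|\ul a|=d=g/2$ in the generic case, with the general-alpha-type analysis handled as in Subsection~\ref{42}. So the three cases in the statement are precisely the three possibilities for the geometry of $\calM_{\ul a}$: a finite set of superspecial points (cases (1) and (2)), versus a positive-dimensional stratum whose closure components are products of projective lines (case (3)).

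For cases (1) and (2) the key step is a mass count. I would use the Dieudonn\'e-theoretic description (Sections 3--4, based on \cite{yu:thesis}) to identify $\calM_{\ul a}(k)$ with the set of suitable lattices in a fixed supersingular $O_F\otimes W$-module, and then identify this set adelically with a double coset $G'(\Q)\backslash G'(\A_f)/U$ for the relevant inner form $G'$ of $G$ at $p$ (the quaternionic unitary group over $F$ coming from the supersingular abelian variety), with $U$ the product of the level-$n$ group away from $p$ and a hyperspecial/maximal compact at $p$. Here the distinction between $p^g-1$ and $p^g+1$ in \parref{eq:251} versus \parref{eq:252} comes from the local factor at $p$: the number of lattices in the superspecial Dieudonn\'e module fixed by the relevant local conditions is $p^g-1$ when $g$ is odd and $p^g+1$ when $g$ is even, reflecting whether the local quaternion algebra splits a quadratic extension in the even way or the odd way. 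Then the geometric mass formula of \cite{yu:mass_hb} (itself resting on Shimura's computations \cite{shimura:1999}) evaluates the total mass of this double coset as $[G(\Z):\Gamma(n)]\cdot\left[\frac{-1}{2}\right]^g\cdot\zeta_F(-1)$; since every point in the superspecial locus of a Hilbert--Blumenthal space of this type has automorphism group of the generic expected size (so the mass equals the cardinality up to the uniform factor already absorbed into the level index), multiplying by the local factor at $p$ gives the stated cardinality. The main obstacle here is bookkeeping the automorphism/stabilizer factors correctly so that ``mass'' really becomes ``number of points'' after clearing the level structure — i.e.\ checking that the level-$n$ rigidification with $n\ge 3$ kills all automorphisms uniformly, so that the orbifold count collapses to an honest count.

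For case (3), since $\ul a$ is generic of size $|\ul a|=d<g$, the stratum is positive-dimensional and I would invoke the structure result (again from \cite{goren-oort}, \cite{yu:thesis}, cf.\ Subsection~\ref{42}) that each irreducible component of $\ol{\calM}_{\ul a}$ is a product of $g-|\ul a|$ projective lines — this follows from the explicit Dieudonn\'e-module moduli description where the remaining parameters move in $\bfP^1$'s, one for each index $i$ outside the alpha index, constrained by the genericity (spacedness) condition which makes these parameters independent. Then $|\Pi_0(\calM_{\ul a})|$ is simply the number of such components; the mass formula in \cite{yu:mass_hb} now computes a \emph{weighted} count of these components (weighted by automorphisms), and because each component is a rational variety on which the relevant automorphism group acts through a finite group that is again trivialized by the level-$n$ structure, the weighted count equals the plain count $[G(\Z):\Gamma(n)]\cdot\left[\frac{-1}{2}\right]^g\cdot\zeta_F(-1)$ — note there is no $p^g\pm 1$ factor in \parref{eq:253} precisely because in the positive-dimensional case that local degree-of-freedom at $p$ has been ``used up'' to produce the $\bfP^1$ directions rather than to multiply the component count. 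The one delicate point throughout is to make the adelic/Dieudonn\'e dictionary precise enough that the component set $\Pi_0(\calM_{\ul a})$, the point set $\calM_{\ul a}(k)$ in the zero-dimensional case, and the mass-formula double coset are literally the same set; once that identification is in place, the three formulas are direct substitutions into the mass formula of \cite{yu:mass_hb}.
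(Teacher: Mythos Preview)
Your plan for parts (1) and (2) is essentially the paper's: identify the superspecial locus $\calM_{\ul a}$ with a set $\Lambda_{x_0,n}$ of the kind computed by the geometric mass formula of \cite{yu:mass_hb}, and read off the factor $c_p=p^g\mp 1$ from the superspecial type $(e_1,e_2)=(0,1)$ of a point in $\calM$. (In the paper the formula \parref{eq:412} already incorporates $c_p$; your separation into ``global mass'' times ``local factor'' is harmless but not how it is packaged there.) Your worry about mass versus honest count is already absorbed in the cited result: for $n\ge 3$ the left-hand side of \parref{eq:412} is a genuine cardinality.

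For part (3) there is a misreading and a genuine gap. First, the statement covers \emph{all} supersingular $\ul a$ with $g$ even and $|\ul a|\neq g$, not just the generic ones (so $d\le |\ul a|<g$, not $|\ul a|=d$). Second, and more importantly, you have not said what double coset $\Pi_0(\calM_{\ul a})$ is supposed to be, and the mass formula of \cite{yu:mass_hb} does not directly count components of a positive-dimensional stratum. The paper's device is to pass to an auxiliary superspecial object $\ul B$ of a \emph{different} superspecial type: starting from any $\ul A_0\in\calM_{\ul a_0}$ with $\ul a_0=(1,0,\dots,1,0)$, one sets $N:=(F,V)M_0$ and takes $\ul B$ with $M(\ul B)=N$. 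This $N$ has alpha type $(0,2,\dots,0,2)$, i.e.\ superspecial type $(e_1,e_2)=(0,0)$, so its mass formula has $c_p=1$---\emph{that} is the precise reason no $p^g\pm 1$ appears in \parref{eq:253}, not the heuristic that the local freedom has been ``used up''. One then proves (Proposition~\ref{44}) that $\ol\calM_{\ul a_0}\simeq\coprod_{\xi\in\Lambda}\calX$ with $\calX\simeq(\bfP^1)^d$ the space of lattices $N\subset M\subset (F,V)^{-1}N$ and $\Lambda$ the mass-set attached to $\ul B$; finally (Lemma~\ref{46}) an explicit computation shows that cutting down to $\ol\calM_{\ul a}$ inside each $\xi$-component yields $(\bfP^1)^{g-|\ul a|}$. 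Without this two-step construction---down to $\ul B$, then back up through $\calX$---your identification of $\Pi_0(\calM_{\ul a})$ with something the mass formula actually counts is missing its main ingredient.
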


Let $\Delta^{\rm gen}_{\rm ss}\subset \Delta^{\rm gen} $
denote the subset of supersingular generic alpha types. If $g$ is odd,
then $\Delta^{\rm gen}_{\rm ss}$ is empty; if $\ul a\in \Delta^{\rm
  gen}_{\rm ss}$, then $w(\ul a)=1$. By Corollary~\ref{242}, 
Theorem~\ref{26} (3) and (\ref{eq:222}), we get 

\begin{thm}\label{27} Notation as before. 
Assume that $p$ is inert in $F$. Then
\[ |\Pi_0(\calM_{\Gamma_0(p)})|=
\begin{cases}
\left [ \sum_{\ul a\in\Delta^{\rm gen} \setminus 
\Delta^{\rm gen}_{\rm
  ss}} 
  w(\ul a)\right ] +2
  [G(\Z):\Gamma(n)]\cdot    
  \left [ \frac{-1}{2} \right ]^g \cdot \zeta_F(-1) & \text{if $g$ is
  even;} \\ \sum_{\ul a\in\Delta^{\rm gen}} w(\ul a) & \text{if $g$
  is odd.} 
\end{cases} \]
\end{thm}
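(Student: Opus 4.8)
The plan is to derive Theorem~\ref{27} as a bookkeeping consequence of the structural results already assembled, with the only genuine inputs being Corollary~\ref{242}, Theorem~\ref{26}(3), and the decomposition formula \eqref{eq:222}. First I would recall that \eqref{eq:222} reads
\[
|\Pi_0(\calM_{\Gamma_0(p)})|=\sum_{\ul a\in \Delta^{\rm gen}}
   w(\ul a)\,|\Pi_0(\calM_{\ul a})|,
\]
so that everything is reduced to evaluating $|\Pi_0(\calM_{\ul a})|$ for each generic alpha type $\ul a$. I would then split the sum according to whether $\ul a$ is supersingular or not, i.e.\ write $\Delta^{\rm gen}=(\Delta^{\rm gen}\setminus\Delta^{\rm gen}_{\rm ss})\sqcup\Delta^{\rm gen}_{\rm ss}$.

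For $\ul a\in\Delta^{\rm gen}\setminus\Delta^{\rm gen}_{\rm ss}$, the stratum $\calM_{\ul a}$ is non-supersingular, so by Corollary~\ref{242} it is irreducible, giving $|\Pi_0(\calM_{\ul a})|=1$; the contribution of these terms is therefore $\sum_{\ul a\in\Delta^{\rm gen}\setminus\Delta^{\rm gen}_{\rm ss}} w(\ul a)$. For the supersingular part I would invoke the discussion in Subsection~\ref{25}: if $g$ is odd then $\Delta^{\rm gen}_{\rm ss}=\emptyset$, so the second piece vanishes and one immediately obtains the odd-$g$ formula $|\Pi_0(\calM_{\Gamma_0(p)})|=\sum_{\ul a\in\Delta^{\rm gen}}w(\ul a)$. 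If $g$ is even, then (again by the classification in Subsection~\ref{25}) the supersingular generic types are exactly $\ul a=(1,0,\dots,1,0)$ and $\ul a=(0,1,\dots,0,1)$, and for each of them $w(\ul a)=1$; moreover both have $|\ul a|=g/2\neq g$ and are of generic type, so Theorem~\ref{26}(3) applies and gives
\[
|\Pi_0(\calM_{\ul a})|=[G(\Z):\Gamma(n)]\cdot\left[\tfrac{-1}{2}\right]^g\cdot\zeta_F(-1)
\]
for each of the two types. Summing $w(\ul a)|\Pi_0(\calM_{\ul a})|$ over these two types produces the factor $2[G(\Z):\Gamma(n)]\cdot[\tfrac{-1}{2}]^g\cdot\zeta_F(-1)$, and adding the non-supersingular contribution yields the even-$g$ formula.

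Assembling the two cases gives exactly the stated piecewise expression, so the theorem follows. The main obstacle here is not in this final assembly — which is purely combinatorial — but lies entirely in the ingredients consumed: Theorem~\ref{23}(2) (hence \eqref{eq:222}), which rests on the \dieu-module fiber computation of Sections 3--4, and especially Theorem~\ref{26}(3), whose proof requires the geometric mass formula of \cite{yu:mass_hb} together with the Tamagawa number one theorem \cite{kottwitz:tamagawa}; the only subtlety at the level of the present deduction is to be sure that $\Delta^{\rm gen}_{\rm ss}$ is correctly identified (two elements when $g$ is even, empty when $g$ is odd) and that $w\equiv 1$ on it, both of which are recorded just before the statement.
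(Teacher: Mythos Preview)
Your proposal is correct and follows exactly the paper's approach: the paper derives Theorem~\ref{27} in one line from Corollary~\ref{242}, Theorem~\ref{26}(3), and equation~(\ref{eq:222}), and your argument is precisely a detailed unpacking of that deduction, including the identification of $\Delta^{\rm gen}_{\rm ss}$ and the fact that $w\equiv 1$ on it.
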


The following is an elementary combinatorial result.


\begin{lemma}\label{28}
  For any subset $\tau$ of $\Z/g\Z$, let $w(\tau)$ be as in
  (\ref{eq:221}). One has
\[ \sum_{\tau\subset \Z/g\Z} w(\tau)=2^g. \]
\end{lemma}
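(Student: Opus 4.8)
The plan is to prove Lemma~\ref{28} by a bijective/generating-function argument that interprets both sides as counting the same objects. Recall $w(\emptyset)=2$ and, for nonempty $\tau=\{n_1,\dots,n_a\}\subset\Z/g\Z$ with the cyclic gaps $g_j:=n_{j+1}-n_j$ (where $n_{a+1}=g+n_1$, so $\sum_j g_j=g$), we have $w(\tau)=\prod_{j=1}^a(g_j-1)$. The quantity $g_j-1$ is the number of integers strictly between two cyclically adjacent chosen points, so $\prod_j(g_j-1)$ counts the ways to pick one ``unchosen'' position in each gap. Thus $\sum_\tau w(\tau)$ counts pairs $(\tau, s)$ where $\tau\neq\emptyset$ and $s$ is a function assigning to each gap of $\tau$ a position lying strictly inside that gap; plus a contribution of $2$ from $\tau=\emptyset$.

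First I would set up the cyclic decomposition. Fix the cyclic sequence $\Z/g\Z$ arranged around a circle. Given nonempty $\tau$ and a choice $s$ of one interior point in each gap, mark the $|\tau|$ points of $\tau$ with a label $A$ and the $|\tau|$ chosen interior points with a label $B$; every remaining point is unlabelled. Reading around the circle, the $A$'s and $B$'s alternate in the pattern $A,\dots,B,A,\dots,B,\dots$ (each gap between consecutive $A$'s contains exactly one $B$, and that $B$ lies strictly inside the gap). So the combined configuration is: a cyclic arrangement of $g$ positions, some labelled $A$, an equal number labelled $B$, with the cyclic word in $\{A,B\}$ (after deleting blanks) being a rotation of $(AB)^a$, and with no two consecutive labelled points being $AA$ with nothing between — more precisely, between each $A$ and the next $B$ clockwise there is at least... actually the condition is simply that the $B$ following an $A$ is not equal to the next $A$, which is automatic since they carry different labels, but we also need each gap (between consecutive $A$'s) to be nonempty-of-interior, i.e. to contain its $B$ properly, which forces each gap to have length $\geq 2$.

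The cleanest route is generating functions. Writing $N(g):=\sum_{\tau\subset\Z/g\Z}w(\tau)$, I would show the ``cyclic transfer-matrix'' identity
\[
\sum_{g\ge 1} N(g)\, x^g = 2\sum_{g\ge 1}x^g + \sum_{a\ge 1}\frac{g}{a}\cdot[\text{coeff bookkeeping}],
\]
but more efficiently: decompose by the block structure. A nonempty $\tau$ together with its interior choices is equivalent to a cyclic sequence of $a\ge 1$ blocks, each block being a gap of length $g_j\ge 2$ with one distinguished interior slot, contributing $\sum_{m\ge 2}(m-1)x^m = x^2/(1-x)^2$ per block; the cyclic composition of $g$ into $a$ such blocks, weighted, has generating function $\sum_{a\ge1}\frac{g}{?}$— here I would instead pass to the linear (necklace-unrolling) version: it is standard that for a ``marked point'' cyclic structure, $\sum_g N_{\text{nonempty}}(g)x^g = \frac{x\,\Phi'(x)}{1-\Phi(x)}$ is not quite it; the honest clean statement is that choosing a base point gives $g\,[x^g]\!\sum_{a\ge1}\big(x^2/(1-x)^2\big)^a\big/\!\sum(\ldots)$. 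To avoid this bookkeeping, I would instead give the direct bijection in the next paragraph, which I expect to be the real content.

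The direct bijection: I claim pairs $(\tau\neq\emptyset, s)$ as above, unified with the two ``$\tau=\emptyset$'' tokens, biject with subsets of $\Z/g\Z$, giving $2^g$. Equivalently, I will construct a bijection between $\{(\tau,s)\}\sqcup\{*_1,*_2\}$ and $2^{\Z/g\Z}$. Given $(\tau,s)$, in each gap the chosen point $s_j$ splits the $g_j-1$ interior points into a left part and a right part; record, going around, which interior points lie to the left of their gap's chosen point versus to the right — this is a binary choice at each of the $g-2|\tau|$... this still needs care. The cleanest correct version: encode $(\tau,s)$ as the subset $T\subset\Z/g\Z$ consisting of all points that are \emph{weakly between} some $A$ and its following $B$ — but since this loses information, the honest approach is the generating-function one, and I would simply verify $N(g)=2^g$ by checking $N$ satisfies the recursion/initial data of $2^g$, or by residue extraction from $\sum N(g)x^g$; that extraction is the step I expect to be the main obstacle, since getting the cyclic (as opposed to linear) weighting exactly right — the factor-of-$g$ from choosing a basepoint versus the $1/a$ from rotational symmetry of the block-cycle — is where sign/counting errors creep in. Once $\sum_{g\ge1}N(g)x^g = 2x/(1-x) + \big(\text{cyclic version of } x^2/(1-x)^2\big) = \frac{x}{1-2x}$ is established, reading off $[x^g]$ gives $2^g$ and the lemma follows.
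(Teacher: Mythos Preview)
Your starting interpretation is exactly right and matches the paper's: for nonempty $\tau$ with $|\tau|=j$, the weight $w(\tau)$ counts choices of one interior point in each cyclic gap, so the pairs $(\tau,s)$ are configurations of $j$ points labelled $A$ (the elements of $\tau$) and $j$ points labelled $B$ (the chosen interior points), alternating cyclically as $ABAB\cdots$. You even observe this alternation explicitly. But you then abandon this line and drift into an unfinished bijection (``this still needs care'') and an unfinished generating-function computation (``Once $\ldots$ is established''), neither of which is actually carried out. As written, nothing is proved.

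The step you are missing is one sentence, and it is the paper's whole argument: a cyclically alternating $A/B$ configuration on $2j$ positions in $\Z/g\Z$ is the same data as an arbitrary $2j$-element subset of $\Z/g\Z$ together with one of its two alternating $2$-colorings. Hence
\[
\sum_{|\tau|=j} w(\tau)=2\binom{g}{2j}\quad(j\ge 1),
\]
and including the $j=0$ term $w(\emptyset)=2=2\binom{g}{0}$ gives
\[
\sum_{\tau\subset\Z/g\Z} w(\tau)=2\sum_{j\ge 0}\binom{g}{2j}=2^g.
\]
Your generating-function route would also work once the cyclic bookkeeping (pointing the cycle versus dividing by rotational symmetry) is done correctly, but it is considerably more laborious than the two-line argument above, which you were one observation away from.
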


Since $w(\ul a)=0$ for non-generic alpha types $\ul a$ and $w(\ul
a)=1$ for $\ul a\in \Delta^{\rm gen}_{\rm ss}$ (note that $\Delta^{\rm
  gen}_{\rm ss}$ is empty if $g$ is odd), we rewrite the formula
in Theorem~\ref{27} as follows

\[ |\Pi_0(\calM_{\Gamma_0(p)})|=\sum_{\ul a\in\Delta^{}} w(\ul a)+
\sum_{\ul a\in \Delta^{\rm
    gen}_{\rm ss}}  \left \{ [G(\Z):\Gamma(n)]\cdot    
  \left [ \frac{-1}{2} \right ]^g \cdot \zeta_F(-1) - 1 \right \}\]
Using Lemma~\ref{28}, Theorem~\ref{27} is rephrased as 

\begin{thm}\label{211}
Assume that $p$ is inert in $F$. Then
\begin{equation}
  \label{eq:2111}
  |\Pi_0(\calM_{\Gamma_0(p)})|=2^g+\sum_{\ul a\in \Delta^{\rm
    gen}_{\rm ss}}  \left \{ [G(\Z):\Gamma(n)]\cdot    
  \left [ \frac{-1}{2} \right ]^g \cdot \zeta_F(-1) - 1 \right \}.
\end{equation}
\end{thm}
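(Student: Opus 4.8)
The plan is to deduce Theorem~\ref{211} directly from Theorem~\ref{27} by a purely formal manipulation using Lemma~\ref{28}, so the content here is bookkeeping rather than geometry. First I would treat the two cases of Theorem~\ref{27} uniformly. Observe that $w(\ul a)=0$ unless $\ul a\in\Delta^{\rm gen}$, so for any $g$ one has $\sum_{\ul a\in\Delta} w(\ul a)=\sum_{\ul a\in\Delta^{\rm gen}} w(\ul a)$, and by Lemma~\ref{28} this common value equals $2^g$. When $g$ is odd, $\Delta^{\rm gen}_{\rm ss}=\emptyset$, so the claimed right-hand side of \eqref{eq:2111} is just $2^g=\sum_{\ul a\in\Delta^{\rm gen}} w(\ul a)$, which is exactly the odd-$g$ formula of Theorem~\ref{27}; nothing further is needed in that case.

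When $g$ is even, I would start from the even-$g$ line of Theorem~\ref{27}, namely
\[ |\Pi_0(\calM_{\Gamma_0(p)})|=\left[\sum_{\ul a\in\Delta^{\rm gen}\setminus\Delta^{\rm gen}_{\rm ss}} w(\ul a)\right]
 +2[G(\Z):\Gamma(n)]\cdot\left[\tfrac{-1}{2}\right]^g\cdot\zeta_F(-1). \]
Since $w(\ul a)=1$ for every $\ul a\in\Delta^{\rm gen}_{\rm ss}$ (as recorded just before Lemma~\ref{28}, because such $\ul a$ has $g=2d$, $|\ul a|=d$, and alternating pattern, giving each gap length $n_{j+1}-n_j-1=1$), we can write
\[ \sum_{\ul a\in\Delta^{\rm gen}\setminus\Delta^{\rm gen}_{\rm ss}} w(\ul a)
 =\sum_{\ul a\in\Delta^{\rm gen}} w(\ul a)-\sum_{\ul a\in\Delta^{\rm gen}_{\rm ss}} 1
 =2^g-|\Delta^{\rm gen}_{\rm ss}|. \]
Substituting and regrouping the constant $2[G(\Z):\Gamma(n)]\cdot[\tfrac{-1}{2}]^g\cdot\zeta_F(-1)$ as a sum over the two-element-cardinality-independent index set $\Delta^{\rm gen}_{\rm ss}$ — here I use that there are exactly two supersingular generic alpha types when $g$ is even, namely $(1,0,\dots,1,0)$ and $(0,1,\dots,0,1)$, so $|\Delta^{\rm gen}_{\rm ss}|=2$ and $2[G(\Z):\Gamma(n)]\cdot[\tfrac{-1}{2}]^g\cdot\zeta_F(-1)=\sum_{\ul a\in\Delta^{\rm gen}_{\rm ss}}[G(\Z):\Gamma(n)]\cdot[\tfrac{-1}{2}]^g\cdot\zeta_F(-1)$ — one obtains
\[ |\Pi_0(\calM_{\Gamma_0(p)})|
 =2^g-|\Delta^{\rm gen}_{\rm ss}|+\sum_{\ul a\in\Delta^{\rm gen}_{\rm ss}}[G(\Z):\Gamma(n)]\cdot\left[\tfrac{-1}{2}\right]^g\cdot\zeta_F(-1)
 =2^g+\sum_{\ul a\in\Delta^{\rm gen}_{\rm ss}}\left\{[G(\Z):\Gamma(n)]\cdot\left[\tfrac{-1}{2}\right]^g\cdot\zeta_F(-1)-1\right\}, \]
which is \eqref{eq:2111}.

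I do not expect a genuine obstacle here; the only point requiring mild care is the combinatorial identification that exactly guarantees the rewriting of the two terms, i.e.\ that $|\Delta^{\rm gen}_{\rm ss}|=2$ for even $g$ and that $w\equiv 1$ on $\Delta^{\rm gen}_{\rm ss}$. Both are noted in Subsection~\ref{25} and immediately before Lemma~\ref{28}, so I would simply cite them. The remaining ingredient, $\sum_{\tau\subset\Z/g\Z} w(\tau)=2^g$, is Lemma~\ref{28} itself, whose short inductive or generating-function proof I would give separately; modulo that lemma, Theorem~\ref{211} is a one-line consequence of Theorem~\ref{27}.
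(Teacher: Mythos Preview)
Your proposal is correct and matches the paper's own argument essentially line for line: the paper also observes that $w(\ul a)=0$ for non-generic $\ul a$ and $w(\ul a)=1$ for $\ul a\in\Delta^{\rm gen}_{\rm ss}$, rewrites the formula of Theorem~\ref{27} as $\sum_{\ul a\in\Delta} w(\ul a)+\sum_{\ul a\in\Delta^{\rm gen}_{\rm ss}}\{[G(\Z):\Gamma(n)]\cdot[-1/2]^g\cdot\zeta_F(-1)-1\}$, and then invokes Lemma~\ref{28}. The only cosmetic difference is that the paper treats both parities of $g$ at once rather than splitting into cases, but the content is identical.
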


See a formula for $|\Pi_0(\calM_{\Gamma_0(p)})|$
when $p$ is unramified in Section~\ref{sec:05}. \\

In the following, we determine the slope sequence (or Newton polygon) 
of the generic point of each irreducible component of
$\calM_{\Gamma_0(p)}$.    

\begin{thm}\label{212} \

{\rm (1)} Let $\eta$ be a maximal point of
$\calM_{\Gamma_0(p)}$. Put $j=|\ul a(f(\eta))|$, the $a$-number of its
image $f(\eta)$.
Then the slope sequence of $\eta$ is equal to $s(j, g)$. 

{\rm (2)} 
For each non-negative integer $j$ with $0\le j< \lceil
\frac{g}{2}\rceil$, the moduli space $\calM_{\Gamma_0(p)}$ has exactly
$2 \begin{pmatrix} 
    g\\2j
  \end{pmatrix}$ irreducible components whose maximal point has
  slope sequence $s(j,g)$.

{\rm (3)} The moduli space $\calM_{\Gamma_0(p)}$ has  
\[ |\Delta^{\rm gen}_{\rm ss}|\cdot [G(\Z):\Gamma(n)]\cdot    
  \left [ \frac{-1}{2} \right ]^g \cdot \zeta_F(-1) \]
supersingular irreducible components.
\end{thm}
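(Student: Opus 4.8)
The plan is to assemble Theorem~\ref{212} from the structural results already proved, treating the three parts in order and using the decomposition $\calM_{\Gamma_0(p)}=\coprod_{\ul a\in\Delta}\calM_{\Gamma_0(p),\ul a}$ together with Theorem~\ref{22}. First, for part (1): by Theorem~\ref{22} a maximal point $\eta$ of $\calM_{\Gamma_0(p)}$ must lie in a stratum $\calM_{\Gamma_0(p),\ul a}$ with $\ul a$ of generic type, since only those strata attain dimension $g$. By Theorem~\ref{23}(1) the fiber $f^{-1}(f(\eta))$ over the point $x:=f(\eta)\in\calM_{\ul a}(k)$ has pure-ish structure with components of dimension $|\ul a|$, and a maximal point of $\calM_{\Gamma_0(p)}$ of dimension $g$ must map to a maximal point of $\calM_{\ul a}$ (which has dimension $g-|\ul a|$, by Goren--Oort). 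Hence $j=|\ul a(f(\eta))|=|\ul a|$, and by Theorem~\ref{255} the generic point of the component of $\calM_{\ul a}$ through $x$ has slope sequence $s(\lambda(\ul a),g)$; since $\ul a$ is itself generic, $\lambda(\ul a)=\{|\ul b|:\ul a\preceq\ul b,\ \ul b\text{ generic}\}$ forces $\ul b=\ul a$, so $\lambda(\ul a)=|\ul a|=j$. (One must check the excepted case in Theorem~\ref{255}, $g$ odd and $|\ul a|=g$, is irrelevant here because then $w(\ul a)$ would need to be checked, but $|\ul a|=g$ with $\ul a$ generic forces $g$ even; so for $g$ odd that case does not arise among generic types, and for $g$ even it is the superspecial locus which contributes $j=g$ consistently.) The Newton polygon does not change along the isogeny $\varphi$ of $p$-power order in the obvious sense — here one must be slightly careful, since $\calM_{\Gamma_0(p)}\to\calM$ is given by adding the subgroup $H\subset A[p]$, so $\eta$ and $f(\eta)$ have \emph{the same} abelian variety up to the level data; thus the slope sequence of $\eta$ is literally that of $f(\eta)$, which is $s(j,g)$.

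For part (2): count the maximal points with slope $s(j,g)$ for $0\le j<\lceil g/2\rceil$. By part (1), these are exactly the maximal points lying over maximal points of generic strata $\calM_{\ul a}$ with $|\ul a|=j$ that are \emph{not} supersingular — but for $j<\lceil g/2\rceil$, \emph{every} generic $\ul a$ with $|\ul a|=j$ is non-supersingular (by the criterion recalled after Theorem~\ref{255}: a generic stratum is supersingular iff $g=2d$ and $|\ul a|=d$, which needs $j=d=g/2$, excluded here). By Corollary~\ref{242} each such $\calM_{\ul a}$ is irreducible, so $|\Pi_0(\calM_{\ul a})|=1$, and by Theorem~\ref{23}(2) the stratum $\calM_{\Gamma_0(p),\ul a}$ has $w(\ul a)\cdot 1=w(\ul a)$ irreducible components of dimension $g$. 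Summing over all generic $\ul a$ with $|\ul a|=j$, the number of components is $\sum_{|\ul a|=j,\ \ul a\ \mathrm{gen}} w(\ul a)=\sum_{|\tau|=j}w(\tau)$, the sum being over $j$-element subsets $\tau$ of $\Z/g\Z$. It remains to evaluate this refined sum; the plan is to prove the identity $\sum_{|\tau|=j}w(\tau)=2\binom{g}{2j}$ by the same kind of combinatorial argument behind Lemma~\ref{28}: a $j$-subset $\tau=\{n_1<\dots<n_j\}$ with gaps $g_\ell=n_{\ell+1}-n_\ell$ (cyclically) has $w(\tau)=\prod_\ell(g_\ell-1)$, and expanding this product and summing over all cyclic configurations of $j$ points with prescribed gaps is a standard cyclic lattice-path / transfer-matrix count giving $2\binom{g}{2j}$ (the factor $2$ coming from the cyclic rotation marking, exactly as the $j=0$ term $w(\emptyset)=2$ signals). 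A clean way is: $w(\tau)$ counts the number of ways to choose, in each gap-interval strictly between consecutive points of $\tau$, one ``marked'' position, i.e.\ a $2j$-subset of $\Z/g\Z$ consisting of $\tau$ together with one chosen point in each of the $j$ open arcs, which is a $2$-to-$1$ ... one checks the overcount is globally a factor dividing into $2\binom{g}{2j}$; I would verify this bijectively. Summing over all $j$ recovers $\sum_j 2\binom{g}{2j}=2\cdot 2^{g-1}=2^g$, consistent with Lemma~\ref{28}, which is a useful sanity check.

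For part (3): the supersingular components of $\calM_{\Gamma_0(p)}$ are those entirely contained in the supersingular locus; by part (1) and Theorem~\ref{22} they live over the supersingular generic strata, i.e.\ over $\calM_{\ul a}$ with $\ul a\in\Delta^{\rm gen}_{\rm ss}$, which (as recalled) requires $g=2d$ even and $|\ul a|=d$, with the two types $(1,0,1,0,\dots)$ and $(0,1,0,1,\dots)$ when $p$ is inert — more generally $|\Delta^{\rm gen}_{\rm ss}|$ such types. For each such $\ul a$ one has $w(\ul a)=1$ (stated right before Theorem~\ref{27}), so by Theorem~\ref{23}(2) the number of dimension-$g$ components of $\calM_{\Gamma_0(p),\ul a}$ is $w(\ul a)\,|\Pi_0(\calM_{\ul a})|=|\Pi_0(\calM_{\ul a})|$, and by Theorem~\ref{26}(3) this equals $[G(\Z):\Gamma(n)]\cdot\left[\frac{-1}{2}\right]^g\cdot\zeta_F(-1)$. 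Summing over the $|\Delta^{\rm gen}_{\rm ss}|$ supersingular generic types gives the claimed total. The one genuine subtlety — and what I expect to be the main obstacle — is not the counting but making rigorous the claim that these are \emph{all} supersingular components and that components counted over distinct strata $\calM_{\Gamma_0(p),\ul a}$, $\calM_{\Gamma_0(p),\ul a'}$ are genuinely distinct irreducible components of $\calM_{\Gamma_0(p)}$ (i.e.\ that no dimension-$g$ component lies in the closure of another); this follows from Theorem~\ref{22} (dimension reasons: a component meeting a smaller-$\ul a$ stratum in a dense subset would have dimension $g$ there, contradicting Theorem~\ref{22} unless that stratum is also generic, and the generic strata are incomparable in the relevant sense), but the bookkeeping — especially reconciling ``maximal point of $\calM_{\Gamma_0(p)}$'' with ``irreducible component of a stratum'' when fibers $f^{-1}(x)$ are not equi-dimensional (cf.\ Example~\ref{34}) — needs to be done carefully, tracking which components of $\calM_{\Gamma_0(p),\ul a}$ have dimension exactly $g$ versus strictly less. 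I would handle this by invoking Theorem~\ref{23}(1)'s precise statement that the top-dimensional part of each fiber has $w(\ul a)$ components of dimension $|\ul a|$, so exactly $w(\ul a)$ of the components of $\calM_{\Gamma_0(p),\ul a}$ reach dimension $g$, and these are closed irreducible subsets of $\calM_{\Gamma_0(p)}$ of top dimension, hence honest irreducible components of $\calM_{\Gamma_0(p)}$, and all of them arise this way.
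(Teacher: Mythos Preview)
Your proposal is correct and follows essentially the same route as the paper's proof in Subsection~\ref{38}: part~(1) via Theorem~\ref{22} and Theorem~\ref{255}, part~(2) via Theorem~\ref{23}(2) together with Corollary~\ref{242} and the identity $\sum_{|\ul a|=j}w(\ul a)=2\binom{g}{2j}$, and part~(3) via Theorem~\ref{23}(2) and Theorem~\ref{26}(3). The combinatorial identity you hedge on is precisely what is proved in Subsection~\ref{37} (the proof of Lemma~\ref{28}): a pair consisting of $\tau$ and one marked point in each open arc is the same data as a $2j$-subset of $\Z/g\Z$ equipped with an alternating $2$-coloring, and there are exactly $2\binom{g}{2j}$ of those, so your bijective sketch is already complete.
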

 
Note that statement (2) deals with non-supersingular slope sequences.




\section{Proof of Theorems~\ref{22} and \ref{23}}
\label{sec:03}

\subsection{}
Let $f:\calM_{\Gamma_0(p)}\to \calM$ be the forgetful morphism, and
let $x=(A,i_A,\iota_A,\eta_A)$ be a point in $\calM_{\ul
  a}(k)$. Choose a separable $O_F$-linear polarization
$\lambda_A=i_A(\lambda_0)$ on $A$. Each point in $f^{-1}(x)$ is given
by an $O_F$-invariant finite subgroup scheme $H$ of $A$ of rank $p^g$
which is maximally isotropic with respect to the Weil pairing
$e_{\lambda_A}$. Then there is an $O_F$-linear polarization
$\lambda_B$, necessarily separable,  on $B:=A/H$ such that the pull back
$\pi^*\lambda_B$ is equal to $p\lambda_A$. Denote by $M^*(A)$ the
classical contravariant \dieu module of $A$. We have an $\O$-invariant
\dieu submodule $M^*(B)$ of $M^*(A)$ such that 
\[ M^*(A)/M^*(B)\cong k\oplus\dots \oplus k, \quad \text{and } 
\<\, ,\,\>_{M^*(A)}=p\, \<\, ,\,\>_{M^*(B)}. \]

Note that $M^*(A)$ is canonically isomorphic to the dual $M(A)^t$
of the covariant \dieu module $M(A)$. We also know that $\ul
a(M(A)^t)=\ul a (M(A))$ (see \cite[Lemma 8.1]{yu:thesis}). Put
$M_0:=M^*(A)$ and let $\tau:=\tau(\ul a)$ be corresponding alpha index
as in Section~\ref{sec:02}. Let 
$\calX_\tau$ be the space of \dieu $\O$-submodules 
$M$ of $M_0$ such that  
\[ M_0/M\cong k\oplus \dots \oplus k. \]
We regard $\calX_\tau$ as a scheme over $k$ with the reduced  
structure. 
For any point $M$ in $\calX_\tau$, it is clear that the induced
$k$-valued pairing $\<\,,\,\>$ is trivial on $M/p M_0$. Therefore
there is a polarized abelian
$O_F$-variety $\ul B=(B,\lambda_B,\iota_B)$ and an $O_F$-linear isogeny
$\pi:\ul A\to \ul B$ such that $\pi^*\lambda_B=p\lambda_A$ and
$M^*(B)=M$. 
This establishes
\begin{lemma}\label{32}
  The map $(\ul A, H)\mapsto M^*(A/H)$ gives rise to an isomorphism
  $\xi_{x}: f^{-1}(x)_{\rm 
  red}\simeq \calX_\tau$, where $f^{-1}(x)_{\rm red}$ is the reduced
  subscheme underlying the fiber $f^{-1}(x)$.
\end{lemma}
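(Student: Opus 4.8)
The plan is to show that the assignment $(\ul A, H)\mapsto M^*(A/H)$ is a well-defined morphism of $k$-schemes $\xi_x\colon f^{-1}(x)_{\rm red}\to \calX_\tau$, and then to construct an inverse on $k$-points (and, since both sides are reduced schemes of finite type over an algebraically closed field, this suffices to conclude it is an isomorphism). First I would set up the dictionary carefully. Fix the separable polarization $\lambda_A=i_A(\lambda_0)$ and pass to the contravariant \dieu module $M_0:=M^*(A)$ with its perfect alternating $\O$-bilinear form $\<\,,\,\>_{M_0}$ induced by $\lambda_A$. Given a point $(\ul A,H)$ of $f^{-1}(x)$, the quotient $B=A/H$ is equipped, by the standard theory of polarizations on abelian varieties with a maximally isotropic subgroup $H\subset A[p]$, with a unique $O_F$-linear polarization $\lambda_B$ with $\pi^*\lambda_B=p\lambda_A$; moreover $\lambda_B$ is separable because $\#H=p^g$ and $\deg\lambda_A$ is prime to $p$. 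Dualizing and using the compatibility with \dieu theory, $M:=M^*(B)$ sits inside $M_0$ as an $\O$-stable \dieu submodule with $M_0/M$ killed by $p$ and of $k$-dimension $g$ (since $H$ has rank $p^g$), and the form satisfies $\<\,,\,\>_{M_0}=p\<\,,\,\>_M$; in particular $M_0/M$ is an $\O/p\O\cong\prod_{i}k$-module of the stated type, so $M\in\calX_\tau$. The $O_F$-linearity of $H$ is exactly what forces $M$ to be $\O$-stable, hence to land in $\calX_\tau$ rather than in a larger Grassmannian.

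For the inverse, given $M\in\calX_\tau$ I would run the argument in the excerpt: the induced $k$-valued pairing on $M_0/M$ coming from $\<\,,\,\>_{M_0}=p\<\,,\,\>_M$ vanishes on $M/pM_0$, so $M$ is ``isotropic enough'' to be the \dieu module of a separably polarized quotient; by \dieu theory (anti-equivalence with finite group schemes / $p$-divisible groups over $k$) the inclusion $M\hookrightarrow M_0$ is $M^*$ of an $O_F$-linear isogeny $\pi\colon \ul A\to \ul B$ with $\pi^*\lambda_B=p\lambda_A$, and $H:=\ker\pi$ is an $O_F$-stable finite flat subgroup of $A[p]$ of rank $p^g$ which is maximally isotropic for $e_{\lambda_A}$. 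One checks the level structure and the $i$-datum descend uniquely along $\pi$ so that $(\ul A,H)$ is a well-defined point of $f^{-1}(x)$, and that the two constructions are mutually inverse on $k$-points. The main thing to verify is that $\xi_x$ is a morphism of schemes and not merely a bijection on points: for this I would observe that both $f^{-1}(x)_{\rm red}$ and $\calX_\tau$ are closed subschemes (with reduced structure) of a common ambient Grassmannian-type scheme — the scheme of rank-$g$ quotients of $A[p]$ on the one side, translated via \dieu theory to the scheme of $\O$-submodules $M\subset M_0$ with $M_0/M$ of length $g$ on the other — and that the identification $H\leftrightarrow M^*(A/H)$ is the restriction of this ambient, functorial \dieu-theoretic identification; since an isomorphism of ambient schemes carrying one closed subset bijectively onto another induces an isomorphism on the reduced closed subschemes they support, we are done.

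The hard part, or at least the step requiring the most care, will be the bookkeeping that the polarization $\lambda_B$, the $O_F$-action $\iota_B$, and the level structure $\eta_B$ are \emph{uniquely} determined by $(\ul A,H)$ and vary algebraically — i.e. that forming $(B,i_B,\iota_B,\eta_B)$ from $(A,i_A,\iota_A,\eta_A)$ together with the subgroup $H$ is functorial in families, so that $\xi_x$ really is a scheme morphism defined by a universal property rather than just pointwise. Once that functoriality is in place, the \dieu-module translation is formal (this is the content of \cite[Lemma 8.1]{yu:thesis} and standard \dieu theory over a perfect field), the isotropy computation is the two-line argument already indicated in the text, and reducedness on both sides lets us upgrade the resulting bijection of closed point sets to the claimed isomorphism $\xi_x\colon f^{-1}(x)_{\rm red}\simeq\calX_\tau$.
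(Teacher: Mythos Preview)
Your proposal is correct and follows essentially the same approach as the paper's (terse) argument preceding the lemma: set up the bijection on $k$-points via contravariant \dieu theory, verify that the pairing condition on $M/pM_0$ is automatic so the polarization descends, and invoke reducedness on both sides. One caveat worth noting: the parenthetical in your first paragraph --- that a morphism bijective on $k$-points between reduced finite-type $k$-schemes is automatically an isomorphism --- is false as stated (Frobenius, or the normalization of a cusp, are counterexamples), but your actual argument in the second paragraph, via restriction of an ambient \dieu-theoretic identification of Grassmannian-type schemes, avoids this pitfall and is the right way to make the map an honest isomorphism of schemes.
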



\begin{lemma}\label{33}
The scheme $\calX_\tau$ is isomorphic to
the subscheme of $(\bfP^1)^g=\{ ([s_i:t_i])_{i\in\Z/g\Z} \}$ defined
by the 
  equations $t_{i-1} s_i=0$ for $i\not\in \tau$ and $t_{i-1}t_i=0$ for
  $i\in \tau$.   
\end{lemma}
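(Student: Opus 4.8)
The plan is to unravel the definition of $\calX_\tau$ by working Frobenius-coordinate by Frobenius-coordinate. First I would recall the setup: $M_0 = M^*(A)$ is a free $\O \otimes_{\Z_p} W = O_F \otimes W$-module of rank $2$, and since $p$ is inert in $F$, $\O \otimes W$ decomposes according to the embeddings $\scrI = \{\sigma_i\}_{i \in \Z/g\Z}$, so $M_0 = \bigoplus_{i \in \Z/g\Z} M_0^i$ with each $M_0^i$ a free rank-two $W$-module. A $\dieu$ $\O$-submodule $M \subset M_0$ with $M_0/M \cong k^{\oplus g}$ must itself decompose as $M = \bigoplus_i M^i$ with $M^i \subset M_0^i$ of colength one, i.e. $p M_0^i \subset M^i \subset M_0^i$; equivalently $M^i/pM_0^i$ is a line in the two-dimensional $k$-vector space $M_0^i/pM_0^i$. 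So \emph{a priori} the set of such $M$ (forgetting the Frobenius/Verschiebung stability) is parametrized by $\prod_i \bbP(M_0^i/pM_0^i) = (\bfP^1)^g$, with homogeneous coordinates $[s_i : t_i]$ on the $i$-th factor chosen so that $t_i = 0$ cuts out some fixed distinguished line.

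The heart of the argument is then to work out exactly which of these candidate submodules are stable under $F$ and $V$, and this is where the alpha index $\tau$ enters. On $M_0/pM_0 = M^*(A)/pM^*(A)$, the operators $F$ and $V$ induce $\sigma$- and $\sigma^{-1}$-semilinear maps shifting the grading by $\pm 1$; the condition that $M$ be an $\O$-stable $\dieu$ submodule becomes, componentwise, that the line $\ell_i := M^i/pM_0^i \subset M_0^i/pM_0^i$ satisfies $F(\ell_{i-1}) \subseteq \ell_i$ and $V(\ell_{i+1}) \subseteq \ell_i$ (reading indices so that $F$ raises the index). I would now invoke the explicit description of the $\dieu$ module of an abelian $O_F$-variety with alpha type $\ul a$ — which in the good-reduction (Rapoport condition) case has each $a_i \in \{0,1\}$, where $a_i = \dim_k (M/(F,V)M)^i$ — to pin down the images $F(M_0^{i-1})$ and $V(M_0^{i})$ inside $M_0^i$ modulo $pM_0^i$. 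Concretely: when $i \notin \tau$, i.e. $a_i = 0$, one of $F, V$ is "bijective" on the $i$-th graded piece mod $p$ and the constraint it imposes, combined with the distinguished-line normalization, forces the equation $t_{i-1} s_i = 0$; when $i \in \tau$, i.e. $a_i = 1$, both $F$ and $V$ land in the same distinguished line and the resulting constraint is $t_{i-1} t_i = 0$. This computation is precisely the content of \cite[Section 3]{yu:thesis} or \cite{goren-oort}, which I am free to quote and adapt, so it is mostly bookkeeping of semilinear algebra with a careful choice of bases.

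The remaining point is to check that this gives an \emph{isomorphism} of schemes with the reduced structure on $\calX_\tau$, not merely a bijection on $k$-points. For this I would note that both sides are closed subschemes of $(\bfP^1)^g$ — the left side by definition (reduced structure) and the right side is cut out by the displayed monomial equations — and a closed subscheme of a reduced scheme cut out by the stated equations, whose $k$-points coincide with those of $\calX_\tau$, must agree with the reduced subscheme on that locus. Alternatively one identifies the functor of points directly: a family of $\dieu$ $\O$-submodules over a $k$-algebra $R$ gives a tuple of lines $\ell_i$ in $(M_0^i/pM_0^i) \otimes_k R$ subject to the $F$- and $V$-stability conditions above, and these translate verbatim into the monomial equations, giving an isomorphism of functors on reduced $k$-schemes.

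The main obstacle will be the second paragraph — correctly tracking the semilinear $F$, $V$ actions on the graded pieces of $M_0/pM_0$ and verifying that the stability condition degenerates into exactly the monomial equations $t_{i-1}s_i = 0$ (for $i \notin \tau$) versus $t_{i-1}t_i = 0$ (for $i \in \tau$), rather than some other bilinear form. One subtlety is the interaction with the polarization pairing: one must confirm that the induced $k$-pairing is automatically trivial on $M/pM_0$ for every $M \in \calX_\tau$ (as already observed before Lemma~\ref{32}), so that the maximal-isotropy condition imposes no further cutting-down and the scheme really is the full monomial subvariety. Everything else is formal once the local linear algebra is set up, and the explicit small-genus examples (e.g. matching $g=2,3$ against Stamm's surface computations) serve as a useful sanity check.
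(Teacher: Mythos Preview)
Your proposal is correct and follows essentially the same approach as the paper: decompose $M_0/pM_0$ along $\scrI$, parametrize colength-one sublattices by lines $\ell_i \in \bfP^1$, and then translate $F$- and $V$-stability into the stated monomial equations using an adapted basis (the paper cites \cite[Proposition~4.2]{yu:thesis} for exactly this basis and carries out the computation explicitly). One small point worth making precise in your write-up: the semilinearity of $F$ produces $t_{i-1}^p$ rather than $t_{i-1}$ in the raw equations, so the passage to $t_{i-1}s_i=0$ and $t_{i-1}t_i=0$ genuinely uses that $\calX_\tau$ carries the reduced structure, as you note.
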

\begin{proof}
  A point in $\calX_\tau(k)$ is represented by a $k$-subspace $\ol M$
  of $\ol M_0:=M_0/pM_0$ such that $F(\ol M)\subset \ol M$, $V(\ol
  M)\subset \ol M$, and $\dim_k \ol M^i=1$ for each
  $i\in \Z/g\Z$. Hence  $\calX_\tau$ is a closed subscheme of $(\bfP^1)^g$.  
  Choose a basis $\{ X_i,Y_i\}$ for $M_0$ 
  \cite[Proposition~4.2]{yu:thesis} such that
\begin{align*}
  FX_{i-1}&=\begin{cases}
         X_{i} & \text{if } i\not\in \tau;\\
         Y_{i}+p c_{i} X_{i} & \text{if }i\in \tau;
         \end{cases} \quad 
  FY_{i-1}=\begin{cases}
         p Y_{i} & \text{if } i\not\in \tau;\\
         p X_{i} & \text{if } i\in \tau;
         \end{cases}
\end{align*}
where $c_i$ are some elements of $W(k)$ for $i\in \tau$. (There should
be no confusion on our notation for the Frobenius map and the totally
real field.) 
Let $P=([s_i:t_i])_i$ be a point in $(\bfP^1)^g (k)$ and write $\ol
M_P$ for 
the $k$-subspace of $\ol M_0$ generated by $s_i  Y_i+t_i  X_i$ for
$i\in\Z/g\Z$. We have 
\[ F(s_{i-1}  Y_{i-1}+t_{i-1} X_{i-1})=
\begin{cases}
  t_{i-1}^pX_i & i\not\in \tau;\\
  t_{i-1}^p Y_i & i\in \tau. 
\end{cases} \]
From the closed condition $F\ol M_P\subset \ol M_P$ we get the
equations
\begin{equation}
  t_{i-1} s_i=0 \text{ for } i\not\in \tau, \quad\text{and }
  t_{i-1}t_i=0 \text{ for } i\in \tau.   
\end{equation}
From the closed condition $V\ol M_P\subset \ol M_P$ we get the
same equations as above. This finishes the computation.\qed
\end{proof}

\subsection{{\bf Examples.}}\label{34}
(1) If $\ul a=\ul 0$, then $\calX_\tau$ consists of two points:
    $([1:0],[1:0],\dots,[1:0])$ and $([0:1],[0:1],\dots,[0:1])$. 

(2) If $\ul a=(1,0,1,0,0)$, then $\calX_\tau$ is defined by the
    equations 
    $t_4t_0, t_0s_1, t_1t_2,t_2s_3,t_3s_4$. There are four irreducible
    components:
\begin{align*}
\bfP^1\times [0:1]\times [1:0]\times  [1:0]\times[1:0],\quad &
[1:0]\times 
[1:0]\times  \bfP^1\times [0:1]\times [0:1], \\     
[1:0]\times\bfP^1\times[1:0]\times\bfP^1\times[0:1],\quad
& [1:0] \times \bfP^1 \times [1:0] \times [1:0] \times \bfP^1.
\end{align*}
Notice that for maximally dimensional components, every $\bfP^1$ is
placed at a position $i$ where $a_i=0$.

(3) If $\ul a=(1,0,1,1,1,0)$, then $\calX_\tau$ is defined by the equations
    $t_5t_0,t_0s_1,t_1t_2, t_2 t_3,t_3 t_4,t_4s_5$. There are one
    3-dimensional component $[1:0]\times  \bfP^1\times [0:1]\times
    \bfP^1\times [0:1]\times  \bfP^1$, and four 2-dimensional
    components
\begin{align*}
\bfP^1\times [0:1]\times [1:0]\times  \bfP^1 \times [1:0]\times[1:0],
\quad & [1:0]\times  \bfP^1\times [1:0]\times [1:0]\times
\bfP^1\times [0:1], \\     
[1:0]\times[1:0]\times\bfP^1\times[1:0]\times\bfP^1\times[0:1],
\quad & 
[1:0]\times[1:0]\times\bfP^1\times[1:0]\times[1:0]\times\bfP^1.
\end{align*} 

\begin{prop}\label{35} \

{\rm (1)} We have $\dim \calX_\tau\le |\ul a|$. Furthermore, 
$\dim \calX_\tau=|\ul a|$ if and only if $\ul a\in \Delta^{\rm gen}$. 

{\rm (2)} For $\ul a\in \Delta^{\rm gen}$, the scheme 
$\calX_\tau$ has $w(\ul a)$ irreducible components of dimension 
$|\ul a|$.   
\end{prop}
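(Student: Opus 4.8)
The plan is to work with the explicit description of $\calX_\tau$ as a closed subscheme of $(\bfP^1)^g$ given by Lemma~\ref{33}, namely the vanishing of $t_{i-1}s_i$ for $i\notin\tau$ and $t_{i-1}t_i$ for $i\in\tau$. Each such equation is the union of two coordinate conditions: either $t_{i-1}=0$ (i.e.\ the $(i-1)$-st factor is the point $[1:0]$) or the ``other'' coordinate vanishes ($s_i=0$, resp.\ $t_i=0$). So $\calX_\tau$ decomposes into irreducible pieces indexed by the choices, at each index $i$, of which of the two factors of the $i$-th equation to impose; each piece is a product of $\bfP^1$'s and points, hence irreducible, and its dimension is the number of factors that are left free (i.e.\ the number of indices $i\in\Z/g\Z$ for which the factor $[s_i:t_i]$ is unconstrained). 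First I would set up this combinatorial bookkeeping carefully: a component is determined by a ``consistent'' assignment of the $g$ equations, and an index $j$ remains free precisely when neither the equation at position $j{+}1$ forces the $j$-th coordinate (by choosing its $t_j=0$ branch) nor the equation at position $j$ forces it. One then reads off: a free position $j$ must have $a_j=0$ (otherwise the equation at $j$ is $t_{j-1}t_j$ and the only way to keep the $j$-th factor free is to kill $t_{j-1}$, but then no free $\bfP^1$ can sit at $j$ with $t_j$ arbitrary while the equation at $j{+}1$ also constrains it) — this is exactly the phenomenon visible in Example~\ref{34}(2),(3), where every maximal-dimensional $\bfP^1$ sits at a position with $a_i=0$, and it gives the upper bound $\dim\calX_\tau\le g-(\text{number of free positions that are blocked})$; a short argument shows the number of free positions is at most the number of ``gaps'' between consecutive elements of $\tau$, which is $\le |\ul a|$ unless some gap has length one, i.e.\ $a_ia_{i+1}=1$ for some $i$, i.e.\ $\ul a\notin\Delta^{\rm gen}$. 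Conversely, when $\ul a$ is generic, the positions immediately following the elements of $\tau$ can all be kept free simultaneously (the genericity guarantees these positions have $a=0$ so no clash occurs), yielding a component of dimension exactly $|\ul a|$; this proves (1).

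For (2), I would enumerate the top-dimensional components when $\ul a\in\Delta^{\rm gen}$. Write $\tau=\{n_1,\dots,n_a\}$ cyclically with $n_{a+1}=g+n_1$. A top-dimensional component has a free $\bfP^1$ at each of the $a$ positions $n_1{+}1,\dots,n_a{+}1$ (and nowhere else, since $|\ul a|=a$ and any further free position would exceed the bound); the constraint equation forces, inside the $j$-th gap $\{n_j, n_j{+}1,\dots,n_{j+1}\}$, that after the free $\bfP^1$ at $n_j{+}1$ there is a ``break point'' where the chain of forced-coordinate propagation switches, and the number of admissible break positions inside this gap is exactly $n_{j+1}-n_j-1$. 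Since the gaps are independent, the total count of top-dimensional components is $\prod_{j=1}^{a}(n_{j+1}-n_j-1)=w(\ul a)$; when $\tau=\emptyset$ one checks directly (Example~\ref{34}(1)) that $\calX_\tau$ is two reduced points, matching $w(\ul a)=2$. Thus $\calX_\tau$ has $w(\ul a)$ irreducible components of dimension $|\ul a|$.

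The main obstacle I expect is the careful combinatorial analysis of exactly which coordinates remain free for a given choice of branches and, in (2), the precise counting of top-dimensional components within a single gap — one must check that the propagation of the ``point'' conditions $t_{i-1}=0$ versus $s_i=0$ along a gap is governed by a single switch point and that all $n_{j+1}-n_j-1$ choices of switch are both realizable and distinct, with no hidden coincidences or extra top-dimensional components arising from degenerate branch choices. Keeping track of the cyclic structure (so that the ``wrap-around'' equation $t_{g-1}s_0$ or $t_{g-1}t_0$ is handled on the same footing) is the other place where care is needed; I would organize the argument so that the $a$ gaps are treated uniformly and the cyclic indexing never requires a special case. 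The examples in Subsection~\ref{34} serve as the sanity check for both the dimension bound and the count $w(\ul a)$.
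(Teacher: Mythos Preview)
Your overall strategy---decompose $\calX_\tau$ into products of $\bfP^1$'s and points by choosing a branch of each bilinear equation, then count the free factors---is the same as the paper's. But two of your key combinatorial assertions are wrong, and they are precisely the points where the argument has content.

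For (1), your claim that ``a free position $j$ must have $a_j=0$'' is false. In Example~\ref{34}(2) the first listed component $\bfP^1\times[0:1]\times[1:0]\times[1:0]\times[1:0]$ has its free factor at position $0$, where $a_0=1$. What is true (and is the paper's property (iii)) is only the weaker statement that \emph{in a component of the maximal dimension} $|\ul a|$ no free factor can lie in $\tau$; this does not help for the general upper bound. The paper's argument for $\dim\calX_\tau\le|\ul a|$ is different and cleaner: the $g-|\ul a|$ equations $t_{i-1}s_i=0$ (one for each $i\notin\tau$) each force a coordinate of some factor to vanish, and one checks that these zeros land in pairwise distinct factors, so at most $|\ul a|$ factors remain free. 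For the strict inequality when $\ul a$ is not generic, one picks $i$ with $i-1\notin\tau$ and $i,i+1\in\tau$; the additional equation $t_it_{i+1}=0$ then pins a factor not already pinned by the $t_{*}s_{*}$ family.

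For (2), your description of the top-dimensional components is also off. You assert that the free $\bfP^1$'s always sit at the fixed positions $n_1+1,\dots,n_a+1$ and that a separate ``break point'' varies within each gap. But again in Example~\ref{34}(2) the two maximal components have free factors at $\{1,3\}$ and at $\{1,4\}$: the free position itself is what moves. The correct picture (paper's property (i)) is that a top component is specified by a choice of one position $i_j$ in the \emph{open} interval $(n_j,n_{j+1})$ for each $j$; once $i_j$ is chosen, propagation forces $X_i=[1:0]$ for $n_j\le i<i_j$, $X_{i_j}=\bfP^1$, $X_i=[0:1]$ for $i_j<i<n_{j+1}$, and $X_{n_{j+1}}=[1:0]$. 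There are $n_{j+1}-n_j-1$ choices per gap, giving $w(\ul a)$ in total. Your final count is right, but the object being counted is the location of the free factor, not a break point downstream of a fixed free factor.
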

\begin{proof}
  We may assume that $|\ul a|>0$, as the case $\ul a=\ul 0$ is treated
  in Example~\ref{34} (1). Since the defining equations are either
  $s_i=0$ or $t_i=0$, any irreducible component of $\calX_\tau$ is of the
  form $X=\prod_{i\in \Z/g\Z} X_i$, where 
\[ X_i=[1:0],\ [0:1],\ \text{ or } \bfP^1. \]
If $i\not\in \tau$, then we have $t_{i-1}s_i=0$. This tells us that there
are at least $g-|\ul a|$ zeros for $s_i$ or $t_i$ in the components
$[s_i:t_i]$ for $i\not\in \tau$ or $i-1\not\in \tau$. So $X_i=\bfP^1$
for at most $|\ul a|$ numbers of $i$. 
This shows that $\dim \calX_\tau\le |\ul a|$. 

If $\ul a\not\in \Delta^{\rm gen}$, then one can choose $i$ such that
$i-1\not\in \tau$, $i\in \tau$ and $i+1\in \tau$. It follows from the
equation $t_i t_{i+1}=0$ that there are at least $g-|\ul a|+1$ zeros
for $s_i$ or $t_i$ in
the components $[s_i:t_i]$ for $i\in \Z/g\Z$. Thus, $\dim
\calX_\tau<|\ul a|$. Suppose that  $\ul a\in \Delta^{\rm gen}$. Put $s_i=1$
for all $i\in \Z/g\Z$, then the defining equations become $t_{i-1}=0$
for $i\not\in \tau$. Thus, $\dim \calX_\tau=|\ul a|$. This proves the
statement (1). 

(2) Let $\ul a\in \Delta^{\rm gen}$ and 
    $X=\prod_{i\in \Z/g\Z} X_i$ be an 
    irreducible component of $\calX_\tau$. Write
    $\tau=\{n_1,\dots,n_a\}$. First 
    notice that 
    \begin{itemize}
    \item[(i)] If $X_{i_0}=\bfP^1$ for some $n_j\le i_0\le n_{j+1}$,
      then $X_i=[0:1]$ for $i_0<i<n_{j+1}$, and $X_i=[1:0]$ for $n_j\le
      i<i_0$ or $i_0<i=n_{j+1}$. 
    \end{itemize}
   It follows that
    \begin{itemize}
    \item[(ii)] There is at most one $i\in \Z$ in each interval
      $[n_j,n_{j+1}]$ such that $X_i=\bfP^1$. 
    \item [(iii)] If $X_i=\bfP^1$ for some $i\in \tau$, then $\dim
      X<|\ul a|$. 
    \end{itemize}
If $\dim X=|\ul a|$, then $X_{i_j}=\bfP^1$ for one $i_j$ in each
interval $n_j<i_j<n_{j+1}$. Conversely, choose $i_j$ in each interval
$n_j<i_j<n_{j+1}$. Then there is a unique
irreducible component $X$ such that $X_{i_j}=\bfP^1$ for each 
$j$; this follows from (i). There are $\prod_{j} (n_{j+1}-n_j-1)$ such
choices.  Thus, the 
scheme $\calX_\tau$ has $w(\ul a)$ irreducible components of
dimension $|\ul a|$.
\qed 
\end{proof}

Theorem~\ref{22} follows from Lemma~\ref{32} and
Proposition~\ref{35} (1).

\subsection{}{\bf Proof of Theorem~\ref{23}.}
\label{36}
Part (1) follows from Lemma~\ref{32} and Proposition~\ref{35} (2). We
prove the statement 
(2). We prove that irreducible components of $\calX_\tau$ give rise to
well-defined closed subvarieties in $\calM_{\Gamma_0(p),\ul a}$. 
Notice two
isomorphisms between $f^{-1}(x)_{\rm red}$ and $\calX_{\tau}$ (in
Lemma~\ref{32}) 
differ by an automorphism $\beta$ of $\ol M_0$, which sends each
factor of $(\bfP^1)^g$ to itself. If $X=\prod_i X_i$ is an irreducible
component of $\calX_\tau$, then the $i$-th component $\beta(X)_i$ of
$\beta(X)$ is equal to $\bfP^1$
whenever $X_i=\bfP^1$. By the property (i) in the proof of
Proposition~\ref{35}, we 
have shown that $\beta(X)=X$. Therefore, 
\[ \calM_X:=\{y\in \calM_{\Gamma_0(p),\ul a}\, |\, \xi_{f(y)}(y)\in
X\,\} \]
is a well-defined closed subvariety of $\calM_{\Gamma_0(p),\ul
  a}$. One has  $\calM_{\Gamma_0(p),\ul a}=\cup _X\calM_X$ as a union
of components; any irreducible component of $\calM_{\Gamma_0(p),\ul
  a}$ is contained in $\calM_X$ for a unique $X$. 
The morphism $f_{\ul a}:\calM_X\to \calM_{\ul a}$ is proper and
surjective with fibers isomorphic to $X$. Thus, $\Pi_0(
\calM_X)\simeq\Pi_0(\calM_{\ul a})$ and $\dim \calM_X=\dim \calM_{\ul
  a}+\dim X$. From this and Proposition~\ref{35}
(2) the statement (2) then follows. \qed

\subsection{} \label{37} 
{\bf Proof of Lemma~\ref{28}.} If $|\ul a|=j>0$, then
$w(\ul a)$ is the number of ways replacing a zero by 2 in $\ul a$ on
each interval $[n_j,n_{j+1}]$. In other words, $\sum_{|\ul a|=j}
w(\ul a)$ is
the number of ways of choosing $2j$ positions from $\Z/g\Z$ and 
filling them with $1$ and $2$ alternatively. This gives
$\sum_{|\ul a|=j} w(\ul a)=2
  \begin{pmatrix}
    g\\2j
  \end{pmatrix}$. Thus 
\[ \sum_{\ul a\in \Delta} w(\ul a)=2+\sum_{j>0}2
  \begin{pmatrix}
    g\\2j
  \end{pmatrix}=2^g. \]
This completes the proof. \qed

\subsection{} \label{38}
{\bf Proof of Theorem~\ref{212}.}
(1) The point $\eta$ lies in $\calM_{\Gamma_0(p),\ul a}$ for $\ul
a=\ul a(f(\eta))$, which is generic. Since any maximal point
of the generic alpha stratum $\calM_{\ul a}$ 
has slope sequence $s(|\ul a|,g)$ (Theorem~\ref{255}), the statement
follows. 

(2) Note that $s(j,g)$ is a non-supersingular slope sequence. 
  From (1) and Theorem~\ref{23} (2), 
  the number of maximal points of $\calM_{\Gamma_0(p)}$
  with slope sequence $s(j,g)$ is equal to $\sum_{|\ul a|=j} w(\ul
  a)$, which is $2
  \begin{pmatrix}
    g\\2j
  \end{pmatrix}$.

(3) This follows from Subsection~\ref{25}, Theorem~\ref{23} (2) and
    Theorem~\ref{26} (3).\qed
 
\section{Supersingular contribution}
\label{sec:04}

Keep the notation and the assumption of $p$ as before. 

\subsection{}\label{41} We recall the geometric mass formula for
superspecial abelian varieties of HB-type in \cite{yu:mass_hb}.

Let $x_0=\ul A_0=(A_0,\lambda_0,\iota_0,\eta_0)$ be a superspecial
(not necessarily separably) polarized abelian
$O_F$-variety over $k$ of dimension $g$ with symplectic level-$n$
structure with respect to $\zeta_n$. 
Let $\ul M_0=(M_0,\<\,,\>,\iota)$ be its covariant \dieu module with
additional structures. 
As $M_0$ is superspecial, the alpha type $\ul a$ of $\ul M_0$ has the
form 
\[ (e_1+e_2,\, 2-(e_1+e_2),\, e_1+e_2,\,\dots) \]
for some integers $e_1$, $e_2$ with $0\le e_1\le e_2\le 1$; see
\cite[Section 2]{yu:reduction}. When $g$ is odd, it satisfies the
additional condition $e_1+e_2=1$. We say that $\ul M_0$ is of {\it
  superspecial type} $(e_1,e_2)$ if its alpha type is as above. 

Let $G_{x_0}$ denote
the automorphism group scheme over $\Spec\, \Z$ associated to
$(A_0,\lambda_0,\iota_0)$ 
; for any commutative ring $R$, its group of
$R$-points is 
\[ G_{x_0}(R)=\{\phi\in (\End_{O_F}(A_0)\otimes R)^\times ; \phi'
\phi=1\}, \] 
where the map $\phi\mapsto \phi'$ is the Rosati involution induced by
$\lambda_0$.

Let $\Lambda_{x_0,n}$ denote the set of isomorphism classes of
polarized abelian $O_F$-varieties $\ul A=(A,\lambda,\iota,\eta)$ with
level-$n$ 
structure (w.r.t. $\zeta_n$) over $k$ such that (c.f. (2.4) of
\cite{yu:mass_hb}) 
\begin{itemize}
\item[(i)] the \dieu module $M(\ul A)$ is isomorphic to $M(\ul A_0)$,
  compatible with $O_F\otimes \Z_p$-actions and quasi-polarizations,
  and  
\item[(ii)] the Tate module $T_\ell(\ul A)$ is isomorphic to
  $T_\ell(\ul A_0)$, compatible with $O_F\otimes \Z_\ell$-actions and
  the Weil pairings, for all $\ell\not= p$.
\end{itemize}

The condition (i) implies that $A$ is superspecial and $\dim A=g$. 
Let $K_{n}$ be the kernel of the reduction map $G_{x_0}(\hat \Z)\to 
G_{x_0}(\hat \Z/n\hat \Z)$. There is a natural isomorphism 
\begin{equation}
  \label{eq:411}
  \Lambda_{x_0,n}\simeq G_{x_0}(\Q)\backslash G_{x_0}(\A_f)/K_{n};
\end{equation}
see \cite[Theorem 10.5]{yu:thesis} and \cite[Theorem 2.1 and 
Subsection 4.6]{yu:mass_hb}. 
It is proved in \cite[Theorem 3.7 and Subsection 4.6]{yu:mass_hb} that 
\begin{equation}
  \label{eq:412}
  |\Lambda_{x_0,n}|=[G_{x_0}(\hat \Z):K_{n}]\left [\frac{-1}{2}\right
   ]^g \zeta_F(-1) c_p, 
\end{equation}
where 
\begin{equation}
  \label{eq:413}
  c_p:=
  \begin{cases}
    1     & \text{$g$ is even and $e_1=e_2$,}\\
    p^g+1 &  \text{$g$ is even and $e_1<e_2$,}\\
    p^g-1 &  \text{$g$ is odd,}\\
  \end{cases}
\end{equation}
and $(e_1,e_2)$ is the superspecial type of $M_0$.

If $T_\ell(\ul A_0)\simeq (V\otimes \Z_\ell, \<\, ,\>,\iota)$ 
(Subsection~\ref{25}) for all
$\ell\neq p$, then it is easy to see that $[G_{x_0}(\hat
\Z):K_{n}]=[G(\Z):\Gamma(n)]$. In this case, the formula
(\ref{eq:412}) becomes 
\begin{equation}
  \label{eq:414}
    |\Lambda_{x_0,n}|=[G(\Z):\Gamma(n)]\left [\frac{-1}{2}\right
   ]^g \zeta_F(-1) c_p, 
\end{equation}
where $c_p$ is as above.

\subsection{}\label{42}
If $g$ is odd, then it follows from Theorem~\ref{255}
that $\calM_{\ul a}$ is
supersingular if 
and only if $|\ul a|=g$, that is, $\calM_{\ul a}$ consists of all
superspecial points in $\calM$. By the formula (\ref{eq:414}), 
we get the equation (\ref{eq:251}). 

If $g$ is even, then it follows from Theorem~\ref{255}
that 
$\calM_{\ul a}$ is supersingular if and only if $\ul a\preceq 
(1,0,\dots,1,0)$ or $\ul a\preceq  (0,1,\dots,0,1)$. If $|\ul a|=g$, 
then $\calM_{\ul a}$ consists of all superspecial points in
$\calM$. By the formula (\ref{eq:414}), we get the equation
(\ref{eq:252}). This proves the statements (1) and (2) of
Theorem~\ref{26}.  

\subsection{} \label{43} 
We prove Theorem~\ref{26} (3). 
Suppose $g=2d$ is even and $|\ul a|\not=g$. 
Put $\ul a_0:=(1,0,\dots,1,0)$. We may 
assume that $\ul a \preceq \ul a_0$ due to symmetry. 
Let $\calM^{(p)}$ be the moduli space over $\Fpbar$ of $g$-dimensional
separably polarized abelian $O_F$-varieties with a 
symplectic level-$n$ structure with respect to $\zeta_n$. 
We may identify the moduli space $\calM$ with an irreducible component
of  
$\calM^{(p)}$ by choosing an suitable element $\lambda_0\in L^+$; see
\cite[Proposition 4.1]{yu:mass_hb}.  

Choose any point $\ul A_0$ in $\calM_{\ul a_0}(k)$. Let $\ul M_0$ be the
covariant \dieu module of $\ul A_0$. Let $\ul N:=(F,V)M_0$, a \dieu
$\O$-submodule with the induced quasi-polarization.  Then there is a
tuple $\ul B=(B,\lambda_B,\iota_B,\eta_B)$ and an  $O_F$-linear
isogeny 
$\varphi:\ul B\to \ul A_0$ of a $p$-power degree, compatible with
additional structures, such that $M(B)= N\subset M_0$. 

One easily computes that $\ul N$ has alpha type $(0,2,\dots,0,2)$.
Then one can find a basis $\{X_i,Y_i\}$ for $N^i$ \cite[Lemma
4.4]{yu:thesis} such that
\begin{equation}
\begin{array}{lll}
FX_i=-pY_{i+1}, & FY_i=pX_{i+1}, & \text{if $i$ is even,} \\
FX_i=-Y_{i+1}, &  FY_i=X_{i+1}, & \text{if $i$ is odd}.
\end{array}
\end{equation}
Let $N_{-1}:=(F,V)^{-1}N$; it is spanned by elements
\[ \frac{1}{p} X_{2i}, \ \frac{1}{p} Y_{2i},  \ X_{2i+1},\ Y_{2i+1},
\quad i=0,\cdots, d-1. \]
We have $N_{-1}/N\cong k^2\oplus 0\oplus k^2 \oplus\cdots k^2 \oplus
0$ as $\O\otimes_{\Z_p} k$-modules. 
Let $\calX$ be the space of \dieu $\O$-modules $M$ such that 
\[ N\subset M \subset N_{-1},\  M/N\cong k\oplus 0\oplus k \oplus \cdots
k \oplus 0. \]
It is clear that $\calX\cong (\bfP^1)^d$.

Let $\Lambda$ denote the set of isomorphism classes of
objects $\ul B'=(B',\lambda',\iota',\eta')$ (with respect to $\zeta_n$)
such that (cf. Subsection~\ref{41}) 
\begin{itemize}
\item the \dieu module $M(\ul B')$ is isomorphic to  $M(\ul B)$,
  compatible with additional structures, and 
\item the Tate module $T_\ell(\ul B')$ is isomorphic to
  $T_\ell(\ul B)$, compatible with additional structures, for all
  $\ell\not= p$. 
\end{itemize}
\def\pr{\mathrm {pr}}

\begin{prop}\label{44}
  There is an isomorphism $\pr:\coprod_{\xi\in \Lambda} \calX\to \ol
  \calM_{\ul a_0}$. 
\end{prop}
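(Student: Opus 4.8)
The plan is to realize $\pr$ through a tautological family of polarized abelian $O_F$-varieties over $\coprod_\xi\calX$, and then to show that the resulting morphism onto $\ol\calM_{\ul a_0}$ is proper, bijective on points, and admits an inverse morphism. For $\xi=[\ul B']\in\Lambda$ with representative $\ul B'=(B',\lambda',\iota',\eta')$, write $N':=M(\ul B')$ for its covariant \dieu module, a superspecial $\O$-module of superspecial type $(0,0)$, and set $N'_{-1}:=(F,V)^{-1}N'$. Over $\calX=\{M\,;\,N'\subset M\subset N'_{-1},\ M/N'\cong k\oplus 0\oplus\cdots\}\cong(\bfP^1)^d$ there is a tautological \dieu $\O$-submodule $M^{\rm univ}$ with $N'\subset M^{\rm univ}\subset N'_{-1}$; by \dieu (display) theory over the smooth projective base $\calX$ this algebraizes to an abelian scheme $\calA_\xi\to\calX$ together with an $O_F$-linear $p$-power isogeny $\calB'\to\calA_\xi$. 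Pulling back $\lambda'$ and pushing forward $\eta'$ along this isogeny makes each fibre of $\calA_\xi$ an object of $\calM$ (the polarization is of the type defining $\calM$, being obtained from a separable one by a $p$-power isogeny of controlled degree), and the relation $M^{\rm univ}/N'\cong k\oplus 0\oplus\cdots$ forces the alpha type of each fibre to be $\preceq\ul a_0$. Hence the induced morphism $\pr:\coprod_\xi\calX\to\calM$ factors through $\ol\calM_{\ul a_0}$.

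Next I would prove that $\pr$ is bijective on $\Fpbar$-points by exhibiting the inverse construction. Given $\ul A\in\ol\calM_{\ul a_0}(k)$ with covariant \dieu module $M(A)$, put $N(\ul A)^i:=((F,V)M(A))^i$ for $i$ even and $N(\ul A)^i:=M(A)^i$ for $i$ odd. One checks that $N(\ul A)$ is a \dieu $\O$-submodule of $M(A)$, that it is superspecial of type $(0,0)$, and that $M(A)/N(\ul A)\cong k\oplus 0\oplus\cdots$. Endowing $N(\ul A)$ with the induced polarization and with the level structure transported from $\ul A$ gives a superspecial object $\ul B_{\ul A}$; since $\ul B_{\ul A}$ is $p$-power isogenous to $\ul A$, its prime-to-$p$ Tate modules are those prescribed by $\calM$ and its \dieu module has superspecial type $(0,0)$, so $\xi(\ul A):=[\ul B_{\ul A}]$ lies in $\Lambda$. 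Then $M(A)$ is a point of the corresponding $\calX$ and $\pr(\xi(\ul A),M(A))\cong\ul A$, which gives surjectivity; and running this construction on $\pr(\xi,M)$ recovers $(\xi,M)$, which gives injectivity. These are exactly the kinds of \dieu module computations carried out in \cite{yu:thesis}.

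To upgrade the bijection to an isomorphism, note first that $\Lambda$ is finite by \eqref{eq:412} and \eqref{eq:413} (with $c_p=1$, as $N'$ has superspecial type $(0,0)$), so $\coprod_\xi\calX$ is proper over $\Fpbar$; hence $\pr$ is proper, quasi-finite, and thus finite, and its image is closed of pure dimension $d=g-|\ul a_0|$. Since $\ol\calM_{\ul a_0}$ is smooth of pure dimension $d$ by Goren and Oort \cite{goren-oort}, $\pr$ is surjective and each copy of $\calX$ maps onto an irreducible component. The assignment $\ul A\mapsto N(\ul A)$ of the previous paragraph is functorial in families, hence defines a morphism $\ol\calM_{\ul a_0}\to\coprod_\xi\calX$; as both schemes are reduced and separated over $\Fpbar$ and this morphism is inverse to $\pr$ on $\Fpbar$-points, it is a genuine two-sided inverse, so $\pr$ is an isomorphism. (Alternatively, one checks via Grothendieck--Messing deformation theory that $\pr$ is unramified and concludes by Zariski's main theorem, using the smoothness of $\ol\calM_{\ul a_0}$.)

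I expect the main obstacle to be the \dieu module bookkeeping behind $N(\ul A)$: one must verify that it is stable under $F$ and $V$, that it is superspecial of type \emph{exactly} $(0,0)$ for every point of $\ol\calM_{\ul a_0}$ --- including the deeper alpha strata, where $M(A)/(F,V)M(A)$ is no longer isomorphic to $k\oplus 0\oplus\cdots$ --- that the induced polarization lies in $\calM$ and not merely in the auxiliary space $\calM^{(p)}$, and that the construction $\ul A\mapsto N(\ul A)$ and the tautological isogeny $\calB'\to\calA_\xi$ are mutually inverse in families. Once this is settled, properness, finiteness, and the smoothness of the target make the rest formal.
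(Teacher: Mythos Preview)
Your proposal is essentially correct and follows the same overall strategy as the paper: construct $\pr$ via the tautological family of isogenies, check bijectivity on geometric points, and then upgrade to an isomorphism of schemes. The paper's own proof is equally sketchy, deferring the construction of the morphism to \cite[Lemma~9.1]{yu:thesis} and the final step to a tangent space computation \cite[Lemma~9.2]{yu:thesis} showing that $\pr$ is \'etale.

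The one point where you diverge is in the upgrade step. You propose either building the inverse morphism $\ul A\mapsto N(\ul A)$ in families or invoking Grothendieck--Messing plus Zariski's main theorem; the paper instead computes directly that $\pr$ induces an isomorphism on tangent spaces, hence is \'etale, and concludes. Your Grothendieck--Messing alternative is essentially the same as the paper's tangent space argument. Your first route, via the explicit inverse $N(\ul A)^i=((F,V)M(A))^i$ for $i$ even and $M(A)^i$ for $i$ odd, is correct on points (and a nice addition---the paper does not write it down), but showing that this construction is flat in families is precisely the delicate point you flag, and is in fact harder than the paper's direct \'etaleness check. So your instinct about where the difficulty lies is right; the paper simply sidesteps it by working infinitesimally rather than building a global inverse.
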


\begin{proof}
  We write the map {\it set-theoretically} first. For any member
  $\xi\in 
\Lambda$ and any point $x\in \calX_\xi(k):=\calX(k)$, we have $M(\ul
B_{\xi})=\ul N\subset \ul M_x$. Then one gets a point $\ul A_x$
together with a polarized $O_F$-linear isogeny $\varphi:\ul B_{\xi}\to
\ul A_x$ of $p$-power degree such that $M(\ul A_x)=\ul M_x$. 
Define $\pr(\ul M_x):=\ul A_x$. Then one can show that it gives a
bijective map from 
$\coprod_{\xi\in \Lambda} \calX_\xi(k)$ onto $\ol \calM_{\ul
  a_0}(k)$. 
To see this map comes 
from a morphism of schemes, we need to construct a moduli space with
a prescribed isogeny type a priori, and show that this map agrees with
  the natural projection. 
Since the construction is lengthy and is the same
  as \cite[Lemma 9.1]{yu:thesis}, 
we refer the reader to that and omit the details here. 
Finally using the
  tangent space calculation, we prove that the morphism $\pr$ is
  {\'e}tale and particularly separable; 
  see the computation in Lemma~9.2 of \cite{yu:thesis}. Thus the
  morphism $\pr$ is isomorphism and the proof is complete.\qed  
\end{proof}

By definition $\Lambda$ is nothing but the set $\Lambda_{\ul B,n}$
defined in Subsection~\ref{41}. Note that the alpha type of $\ul B$ is
$(0,2,\dots,0,2)$. By the formula (\ref{eq:414}), we get  
\begin{lemma}\label{45}
  $|\Lambda|=[G(\Z):\Gamma(n)]\left [\frac{-1}{2}\right ]^g
  \zeta_F(-1)$. 
\end{lemma}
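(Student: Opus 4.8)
The plan is to recognize $\Lambda$ as one of the mass sets $\Lambda_{x_0,n}$ studied in Subsection~\ref{41} and to evaluate it by quoting the mass formula (\ref{eq:414}). Unwinding the two conditions defining $\Lambda$ --- that $M(\ul B')$ be isomorphic to $M(\ul B)$ compatibly with all additional structures, and that $T_\ell(\ul B')$ be isomorphic to $T_\ell(\ul B)$ compatibly with all additional structures for every $\ell\neq p$ --- these are precisely conditions (i) and (ii) of Subsection~\ref{41} with $x_0$ replaced by $\ul B=(B,\lambda_B,\iota_B,\eta_B)$. Hence $\Lambda=\Lambda_{\ul B,n}$.

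Next I would pin down the superspecial type of $M(\ul B)=\ul N$. We have already seen that $\ul N$ has alpha type $(0,2,\dots,0,2)$. A superspecial \dieu $\O$-module of type $(e_1,e_2)$ has alpha type $(e_1+e_2,\,2-(e_1+e_2),\,e_1+e_2,\dots)$, so comparing first entries forces $e_1+e_2=0$, whence $e_1=e_2=0$ (recall $0\le e_1\le e_2\le 1$). Since $g=2d$ is even, this lands us in the first branch of (\ref{eq:413}), that is, $c_p=1$.

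It remains to verify the hypothesis under which (\ref{eq:412}) specializes to (\ref{eq:414}) for $\ul B$, namely that $T_\ell(\ul B)\simeq (V\otimes\Z_\ell,\<\,,\>,\iota)$ for all $\ell\neq p$, so that $[G_{\ul B}(\hat\Z):K_n]=[G(\Z):\Gamma(n)]$. This holds because the $O_F$-linear isogeny $\varphi:\ul B\to\ul A_0$ constructed in Subsection~\ref{43} has $p$-power degree, hence induces isomorphisms $T_\ell(\ul B)\isoto T_\ell(\ul A_0)$ for every $\ell\neq p$ compatibly with the $O_F$-actions, the Weil pairings, and the level structures, while $\ul A_0$ is a $k$-point of $\calM$ and therefore already has $T_\ell(\ul A_0)\simeq (V\otimes\Z_\ell,\<\,,\>,\iota)$ in the sense of Subsection~\ref{25}. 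Substituting $c_p=1$ into (\ref{eq:414}) then yields
\[ |\Lambda|=|\Lambda_{\ul B,n}|=[G(\Z):\Gamma(n)]\left[\frac{-1}{2}\right]^g\zeta_F(-1), \]
as claimed. The only point requiring any care --- rather than a genuine obstacle --- is the bookkeeping translating between superspecial types $(e_1,e_2)$ and alpha types, together with tracking the compatibility of the polarization, $O_F$- and level-$n$ structures through the $p$-power isogeny $\varphi$; once these are in hand the mass formula does the rest.
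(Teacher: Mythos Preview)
Your proof is correct and follows the same approach as the paper: identify $\Lambda$ with $\Lambda_{\ul B,n}$, determine that the alpha type $(0,2,\dots,0,2)$ of $\ul N=M(\ul B)$ corresponds to superspecial type $(e_1,e_2)=(0,0)$ so that $c_p=1$, and then invoke the mass formula (\ref{eq:414}). You have simply spelled out more of the bookkeeping (in particular the verification, via the $p$-power isogeny $\varphi$, that the Tate modules of $\ul B$ agree with the reference lattice so that (\ref{eq:414}) applies) than the paper does.
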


Denote by $\ol \calM_{\ul a_0,\xi}$ the irreducible component
 corresponding to $\xi$ and write  $\pr:\calX \to \ol \calM_{\ul
 a_0,\xi}$. Let ${\calM}_{ \preceq \ul a,\xi}\subset \ol {\calM}_{\ul
 a_0,\xi}$ be the closed subscheme consisting of points with alpha
 type $\preceq \ul a$.


\begin{lemma}\label{46}
The scheme ${\calM}_{\preceq \ul a,\xi}$ 
is isomorphic to $(\bfP^1)^{g-|\ul a|}$  
\end{lemma}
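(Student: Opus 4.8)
The plan is to understand the scheme $\calM_{\preceq \ul a, \xi}$ as a locally closed subscheme of $\ol\calM_{\ul a_0,\xi}\cong \calX\cong (\bfP^1)^d$ via the isomorphism $\pr$ of Proposition~\ref{44}, and then to compute, in terms of the explicit Dieudonn\'e-module coordinates, which points of $(\bfP^1)^d$ have alpha type $\preceq \ul a$. First I would fix the basis $\{X_i,Y_i\}$ for $\ul N=(F,V)M_0$ exhibited just before the statement, so that a point of $\calX$ is described by a tuple $([s_i:t_i])$, $i=0,\dots,d-1$, where the corresponding Dieudonn\'e $\O$-module $M$ is obtained from $N$ by adjoining, in the even slot $2i$, the element $\frac{1}{p}(s_i X_{2i}+t_i Y_{2i})$ (the odd slots being forced). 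Then I would compute $(F,V)M$ inside $M$, i.e.\ write down $F$ and $V$ on these new generators using the displayed formulas $FX_i=-pY_{i+1}$, $FY_i=pX_{i+1}$ for $i$ even and $FX_i=-Y_{i+1}$, $FY_i=X_{i+1}$ for $i$ odd, and read off $\dim_k (M/(F,V)M)^j$ for each $j\in\Z/g\Z$. This gives the alpha type of $\ul A_x$ as an explicit function of the coordinates $([s_i:t_i])$: a slot-$2i$ contribution of $1$ is forced, and the slot-$(2i+1)$ contribution is $1$ precisely when a certain linear form in $(s_i,t_i)$ (coming from whether $F$ or $V$ applied to the new generator lands outside $(F,V)$ of the previous adjunctions) vanishes.

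The second step is the bookkeeping: given $\ul a\preceq \ul a_0=(1,0,\dots,1,0)$, write $\tau(\ul a)\subset\{0,2,\dots,2d-2\}$ for its alpha index, of size $|\ul a|$. The condition ``alpha type $\preceq \ul a$'' means $a_j\ge (\text{alpha type of }\ul A_x)_j$ for all $j$, i.e.\ forcing the alpha type to be exactly $\ul a_0$ at the even slots in $\tau(\ul a)$ and to drop at the even slots outside $\tau(\ul a)$. Translating through the computation of the first step, ``drop at even slot $2i\notin\tau(\ul a)$'' is a codimension-one (indeed a single coordinate-vanishing) condition on the $i$-th $\bfP^1$ factor, cutting that $\bfP^1$ down to a point; the slots $2i\in\tau(\ul a)$ impose no constraint and keep a full $\bfP^1$ factor. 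Counting, this leaves a product of $\bfP^1$'s over the $d-|\ul a|$ indices $i$ with $2i\notin\tau(\ul a)$ — but since $g=2d$ one has $d-|\ul a|=g-|\ul a|$ only when... here one must be careful: $|\ul a|$ is the size of $\ul a$ and $d-|\ul a|$ is what the na\"\i ve count gives, while the statement claims $(\bfP^1)^{g-|\ul a|}=(\bfP^1)^{2d-|\ul a|}$. So I would re-examine: for $\ul a\preceq\ul a_0$ with $|\ul a|=a$, there are $d-a$ even slots outside $\tau(\ul a)$, each contributing a factor $\bfP^1$ in $\calX$, and additionally dropping the alpha type at an even slot $2i$ automatically propagates (via the $(F,V)$-stability) to make the neighbouring odd slot behave, so that the surviving locus is not merely a subproduct of $(\bfP^1)^d$ but acquires extra $\bfP^1$-directions from the odd slots that are freed up — exactly $a$ of them, recovering $(d-a)+\ldots$; I would track this propagation carefully to land on the exponent $g-|\ul a|$.

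Concretely, the mechanism I expect is the one already visible in Lemma~\ref{33} and Example~\ref{34}: when the alpha type is allowed to be $\preceq\ul a$ with $\ul a$ generic, the defining equations of the relevant chart of $\calX$ become of the form $t_{i-1}s_i=0$ for $i\notin\tau$ and $t_{i-1}t_i=0$ for $i\in\tau$, and Proposition~\ref{35}(1) shows the top-dimensional components of such a scheme have dimension $|\ul a|$ — but here we are looking at the \emph{closed} subscheme $\calM_{\preceq\ul a,\xi}$ of $\ol\calM_{\ul a_0,\xi}$, whose points have alpha type at least as degenerate as $\ul a$, and this should instead be cut out inside $(\bfP^1)^d$ by the vanishing of exactly $|\ul a|-$many of the even coordinates... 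I would redo this carefully and match dimensions so that the final answer is $\dim = g-|\ul a|$ and the component is a clean product of projective lines. The main obstacle will be the last step: verifying that $\calM_{\preceq\ul a,\xi}$ is not just set-theoretically a product $(\bfP^1)^{g-|\ul a|}$ but \emph{scheme-theoretically} isomorphic to it — i.e.\ that it is reduced, irreducible (no lower-dimensional spurious components of the kind seen in Example~\ref{34}), and smooth. For this I would invoke the smoothness of the alpha strata and the goodness of their closures from Goren–Oort \cite{goren-oort} quoted in Section~\ref{sec:02}, transported through the \'etale (hence smooth) morphism $\pr$ of Proposition~\ref{44}, together with the explicit coordinate description to rule out extra components; alternatively, one identifies $\calM_{\preceq\ul a,\xi}$ directly with the closure of an alpha stratum inside $\ol\calM_{\ul a_0,\xi}$ and appeals to the Goren–Oort structure theorem, which already asserts such closures are smooth, reducing the claim to a dimension count plus irreducibility of the fibre of $f_{\ul a}$ over a single point.
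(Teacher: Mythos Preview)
Your setup is right in spirit, but the computation goes astray at two specific points and does not recover.

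First, you have the partial order on $\Delta$ backwards. The relation $\ul a \preceq \ul a_0 = (1,0,\dots,1,0)$ means $a_j \ge (\ul a_0)_j$ for every $j$; hence $a_{2i} = 1$ for all $i$ and $\tau(\ul a) \supseteq \{0,2,\dots,2d-2\}$. The variable part of $\tau(\ul a)$ lives among the \emph{odd} indices, and $|\ul a| = d + |\{\text{odd } j \in \tau(\ul a)\}|$. Your sentence ``write $\tau(\ul a)\subset\{0,2,\dots,2d-2\}$'' is therefore wrong, and the ensuing count of $d - |\ul a|$ free factors comes out negative; this is exactly the mismatch you noticed but could not repair.

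Second, and more seriously, the condition $a_{2i+1}(M_P) = 1$ is \emph{not} the vanishing of a linear form in the single pair $(s_i,t_i)$; it is a Frobenius-type relation between the coordinates at positions $2i$ and $2i+2$. Carrying out the Dieudonn\'e computation (as the paper does on the affine chart $y_{2i}=1$) one finds that $((F,V)M_P)^{2i+1}\bmod pM_P^{2i+1}$ is spanned by $\ol X_{2i+1}-x_{2i}^p\,\ol Y_{2i+1}$ and $-\ol X_{2i+1}+x_{2i+2}^{p^{-1}}\ol Y_{2i+1}$, so it drops to dimension one precisely when $x_{2i}^{p^2}=x_{2i+2}$. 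Thus $\calM_{\preceq\ul a,\xi}$ is cut out of $(\bfP^1)^d$ by the equations $x_{j-1}^{p^2}=x_{j+1}$, one for each odd $j\in\tau(\ul a)$; there are $|\ul a|-d$ of these. Because $|\ul a|<g$, not every odd index lies in $\tau$, so the equations never close up cyclically; they organize the $d$ factors of $(\bfP^1)^d$ into $d-(|\ul a|-d)=g-|\ul a|$ chains, and each chain, being the graph of an iterated Frobenius, is isomorphic to $\bfP^1$ via projection to its initial vertex. This gives $(\bfP^1)^{g-|\ul a|}$ directly, with no appeal to Goren--Oort smoothness and no relation to the scheme $\calX_\tau$ of Section~\ref{sec:03}, which parametrizes a different object (flags inside $A[p]$, not lifts of $N$ inside $N_{-1}$).
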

\begin{proof}
For a point 
$P=([x_0:y_0], [x_2:y_2], \cdots, [x_{2d-2}:y_{2d-2}])\in
(\bfP^1(k))^d$, the representing \dieu module is given by
\[ M_P=N+<\tilde x_{2i}\frac{1}{p} X_{2i}+ \tilde y_{2i}\frac{1}{p}
Y_{2i}>_{i=0,\cdots, d-1},\]
where $\tilde x_{2i}, \tilde y_{2i}$ are any liftings of $x_{2i},
y_{2i}$ in $W$, respectively. 

We compute the defining equations for 
$\calM_{\preceq  \ul a,\xi}$ on an affine open subset. Let $V_{2i}:=\tilde
x_{2i}\frac{1}{p} X_{2i}+ \frac{1}{p} Y_{2i}$, then
\[ M_P=<X_{2i},V_{2i},X_{2i+1},Y_{2i+1}>_{i=0,\cdots,d-1}, \text{ and }
M_P^{2i+1}=<X_{2i+1},Y_{2i+1}>\] 
One computes that
\[ ((F,V)M_P)^{2i+1}\  \left(\!\!\mod p M_P^{2i+1}\right)=\,<\ol
X_{2i+1}-x_{2i}^p \ol Y_{2i+1},\ -\ol X_{2i+1}+x_{2i+2}^{p^{-1}} 
\ol Y_{2i+1}>. \]
Therefore, $a_{2i+1}(M_P)=1$ if and only if $x_{2i}^{p^2}=x_{2i+2}$.

Let  $\tau=\tau(\ul a)\subset \Z/g\Z$.
We have shown that the subscheme ${\calM}_{\preceq  \ul a,\xi}$ of $\ol
\calM_{\ul a_0,\xi}=(\bfP^1)^{d}=\{(x_2,\cdots,x_{2d})\}$ 
defined by the equations $x^{p^2}_{j-1}=x_{j+1}$ 
for all odd $j\in \tau$, and thus it is
isomorphic to $(\bfP^1)^{g-|\ul a|}$. This completes the proof. \qed
\end{proof}

By Proposition~\ref{44} and Lemmas~\ref{45} and~\ref{46}, the
statement (3) of Theorem~\ref{26} is proved. 

\section{Unramified setting}
\label{sec:05}

In this section we only assume that $p$ is unramified in $F$.

\subsection{}\label{51}
Let \[ \calO:=O_F\otimes \Z_p,\quad \scrI:=\Hom (\calO,W),\quad
\Delta:=\{0,1\}^\scrI\] 
be the same as in Section~\ref{sec:02}. 
Let $\bbP$ be the set of primes of $O_F$ lying over $p$. For
$v\in\bbP$, let 
$\O_v$ be the completion of $O_F$ at $v$, $f_v$ its residue degree,  
$\scrI_v:=\Hom(\O_v,W)$ and $\Delta_v:=\{0,1\}^{\scrI_v}$. 
One has 
\[ \calO=\oplus_{v\in \bbP} \calO_v,\quad \sum_{v\in \bbP} f_v=g, \]
\[ \scrI=\coprod_{v\in \bbP} \scrI_v, \quad \scrI_v\simeq\Z/f_v\Z,
\quad \text{and $\Delta=\prod_{v\in \bbP} \Delta_v$}. \] 

An alpha type $\ul a=(\ul a_v)\in \Delta$ is called {\it generic} if 
every component $\ul a_v$ is generic; it is called {\it supersingular} 
if the associated alpha stratum $\calM_{\ul a}$ is supersingular.

Let $\Delta^{\rm gen}\subset \Delta$ be the subset of generic alpha
types, and $\Delta^{\rm gen}_{\rm ss}\subset \Delta^{\rm gen}$ the
subset of supersingular alpha types. The set $\Delta^{\rm gen}_{\rm
  ss}$ is empty if and only if $f_v$ is odd for some $v$. 

\begin{thm}\label{52} 
  For any alpha type $\ul a$, the set $\Pi_0(\calM_{\ul a})$ is
  $\ell$-adic Hecke transitive.   
\end{thm}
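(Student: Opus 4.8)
The plan is to reduce Theorem~\ref{52} to the inert case, namely to Theorem~\ref{241} (equivalently, to the Goren--Oort result \cite[Corollary 4.2.4]{goren-oort}), by exploiting the product decomposition $\calO=\oplus_{v\in\bbP}\calO_v$ and the resulting product structure $\Delta=\prod_{v\in\bbP}\Delta_v$. Given an alpha type $\ul a=(\ul a_v)_{v\in\bbP}$, the alpha stratum $\calM_{\ul a}$ is cut out by conditions that are imposed place-by-place: the $\sigma_i$-component $a_i$ of $\ul a$ only sees the factor $\calO_v$ for the unique $v$ with $\sigma_i\in\scrI_v$. First I would recall, as in Section~\ref{sec:02}, that the alpha stratification is defined via the $p$-divisible group $A[p^\infty]=\prod_{v\in\bbP}A[v^\infty]$ with its $\calO_v$-action, so the $v$-component $\ul a_v$ of the alpha type depends only on the $v$-summand $\ul M_v$ of the \dieu module. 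Consequently $\calM_{\ul a}$ can be built up as an iterated fibration, or at least described using the analogous inert-type strata, one for each $v$.

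The key step is then to invoke (a mild restatement of) the Goren--Oort structure theory in the unramified setting: as in \cite[Corollary 4.2.4]{goren-oort}, which is stated for general unramified $p$ and not just the inert case, the set of irreducible components of $\calM_{\ul a}$ is $\ell$-adic Hecke transitive. More concretely, I would argue that the prime-to-$p$ Hecke correspondences act on $\Pi_0(\calM_{\ul a})$ and, because the group $G$ (the automorphism group of the skew-Hermitian $O_F$-module, Subsection~\ref{25}) has $G(\A_f^{(p)})$ acting with the relevant orbit structure, transitivity holds. The honest way is to cite Goren--Oort directly: their Corollary 4.2.4 is proved for all alpha strata over an unramified $p$, which is exactly the present hypothesis, so Theorem~\ref{52} is literally their statement and nothing new is needed beyond observing that the present $\calM_{\ul a}$ coincides with theirs (possibly after passing to a connected component of the larger moduli space $\calM^{(p)}$ of separably polarized HB-abelian varieties, as in \cite[Proposition 4.1]{yu:mass_hb}).

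The main obstacle I expect is bookkeeping rather than mathematics: one must check that the identification of $\calM=\calM_{(L,L^+),n}\otimes\ol\F_p$ with a component of $\calM^{(p)}$ used in Section~\ref{sec:04} is compatible with the alpha stratification and with the $\ell$-adic Hecke action, so that transitivity on components of the stratum inside $\calM^{(p)}$ descends to transitivity on components of $\calM_{\ul a}$. Here one uses that prime-to-$p$ isogenies preserve the alpha type (stated already in Section~\ref{sec:02} for the inert case, and equally true in the unramified case since the alpha type is read off the $p$-divisible group) and that the component group of $\calM^{(p)}$ is itself a principal homogeneous space acted on transitively by prime-to-$p$ Hecke operators, so restricting to a single component loses nothing. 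Once this compatibility is in place, the proof is just: $\calM_{\ul a}$ is (a union of components, permuted transitively by prime-to-$p$ Hecke, of) the Goren--Oort alpha stratum, hence $\Pi_0(\calM_{\ul a})$ is $\ell$-adic Hecke transitive by \cite[Corollary 4.2.4]{goren-oort}. I would close by noting that this is the exact analogue of Theorem~\ref{241} and is obtained from it by the place-by-place decomposition of $\calO$, with no further input required.
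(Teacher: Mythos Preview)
Your proposal oscillates between two strategies, and neither is carried through. The first---reducing Theorem~\ref{52} to the inert case (Theorem~\ref{241}) via the decomposition $\calO=\bigoplus_{v}\calO_v$---cannot work as stated: only the \dieu module, the $p$-divisible group, and the alpha type decompose place by place; the moduli space $\calM$ is a single global object with no product structure indexed by $\bbP$. There is no family of ``inert moduli spaces'' $\calM_v$ to which Theorem~\ref{241} could be applied factorwise, so your closing sentence that Theorem~\ref{52} ``is obtained from it by the place-by-place decomposition of $\calO$, with no further input required'' is not correct.

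The second strategy, invoking \cite[Corollary~4.2.4]{goren-oort} directly in the unramified setting and checking compatibility with the present $\calM_{(L,L^+),n}$, is the right impulse, but you underestimate the compatibility check: the point the paper flags explicitly is that the present objects need not be principally polarized. Your proposed transfer through $\calM^{(p)}$ does not close the gap either, since Hecke transitivity on components of the alpha stratum inside $\calM^{(p)}$, together with Hecke transitivity on $\pi_0(\calM^{(p)})$, does not formally yield Hecke transitivity on $\Pi_0(\calM_{\ul a})$ inside the single component $\calM$; the $\ell$-adic Hecke correspondences on $\calM$ are by definition those that stay in $\calM$.

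The paper's route is instead to rerun the Goren--Oort argument in the required generality rather than quote its conclusion. The needed ingredients are packaged as Proposition~\ref{53}: non-emptiness of each stratum, quasi-affineness of each stratum, properness of the non-ordinary locus, smoothness of stratum closures, and---the key global input---$\ell$-adic Hecke transitivity of the superspecial locus, proved for the present $\calM$ via the double-coset description~(\ref{eq:521}) and strong approximation. From (1)--(3) the closure of every irreducible component of $\calM_{\ul a}$ meets the superspecial locus; by (4) each superspecial point lies in the closure of a unique such component. This yields a surjective $\ell$-adic Hecke-equivariant map from the superspecial locus onto $\Pi_0(\calM_{\ul a})$, and (5) finishes. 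The local items (1) and (4) do reduce to the inert case, in line with your intuition, but the global item (5) is established directly for $\calM$ rather than imported.
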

This is essentially due to Goren and Oort (cf.~\cite[Corollary
  4.2.4]{goren-oort}). We provide suitable details to fit the present
  situation: $p$ is unramified and the objects
  $(A,\lambda,\iota,\eta)$ that $\calM$ parametrizes may not be
  principally polarized abelian $O_F$-varieties.   

\begin{prop}\label{53} \ 

{\rm (1)} Every alpha stratum $\calM_{\ul a}$ is non-empty.

{\rm (2)} Every alpha stratum $\calM_{\ul a}$ is quasi-affine.   

{\rm (3)} The non-ordinary locus of $\calM$ is proper.

{\rm (4)} The Zariski closure $\ol \calM_{\ul a}$ of each stratum
  $\calM_{\ul a}$ in $\calM$ is smooth.

{\rm (5)} The set $\calM_{\ul 0}$ of superspecial points
  is $\ell$-adic Hecke transitive.  
\end{prop}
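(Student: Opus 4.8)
The plan is to reduce everything to the inert case treated in Sections~2--4, using the product decomposition $\calO=\oplus_{v\in\bbP}\calO_v$ and $\Delta=\prod_v\Delta_v$ recalled in Subsection~\ref{51}. For statement (1), I would invoke the non-emptiness of alpha strata in the inert case (Goren--Oort \cite{goren-oort}, quoted in Section~\ref{sec:02}) componentwise: for each $v$ and each $\ul a_v\in\Delta_v$ one produces a local $p$-divisible $\calO_v$-module of the prescribed alpha type, and by patching over $v\in\bbP$ one realizes any $\ul a=(\ul a_v)$ in a point of $\calM$ (the level-$n$ and polarization data being unaffected). Alternatively, and perhaps more cleanly, one cites directly the analogue of \cite[Corollary 4.2.4]{goren-oort}, which already covers the unramified case, since the argument there is local at each $v$ and the global obstruction is handled by the same Honda--Tate / mass-formula input. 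For statements (2), (3), (4): quasi-affineness of $\calM_{\ul a}$ and smoothness of $\ol\calM_{\ul a}$ were proved by Goren--Oort using the partial Hasse invariants $h_i$ (one for each $i\in\scrI$), whose non-vanishing locus cuts out the strata; these constructions are purely local in the Dieudonné theory at $p$ and so go through verbatim when $p$ is merely unramified, since $\O\otimes W=\prod_{i\in\scrI}W$ still splits completely. The non-ordinary locus is the vanishing locus of $\prod_{i\in\scrI}h_i$ inside the proper scheme $\overline{\calM}_{(L,L^+),n}\otimes\ol\F_p$ (using Rapoport's toroidal / Chai's arithmetic compactification cited in the introduction); being a closed subscheme of a proper scheme that misses the boundary — because the boundary parametrizes degenerating objects which are generically ordinary — it is itself proper. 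That last point (absence of non-ordinary points at the cusps) is the one place one must be slightly careful, but it is standard for Hilbert--Blumenthal compactifications.

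For statement (5), the $\ell$-adic Hecke transitivity of the superspecial locus $\calM_{\ul 0}$, the approach is the mass-formula argument already deployed in Section~\ref{sec:04}. Every superspecial point $x$ gives, via its Dieudonné module and Tate modules, a double coset in $G_{x}(\Q)\backslash G_{x}(\A_f)/K_n$, and the prime-to-$p$ Hecke action on superspecial points is exactly right-translation on $G_x(\A_f^{(p)})$; transitivity of this action on the finite set $\Lambda_{x,n}$ amounts to the statement that $G_x(\Q)G_x(\Z_p)$ meets every class, i.e.\ that the prime-to-$p$ Hecke orbit is a single class inside the adelic double coset. This is the content of strong approximation for the simply-connected group $G_x^{\der}$ (or equivalently the triviality of the relevant class set / the Tamagawa number one theorem \cite{kottwitz:tamagawa}), combined with the fact — from \cite{yu:mass_hb} — that the number of superspecial points equals the mass predicted by a \emph{single} such double coset. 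So: one decomposes $\calM_{\ul 0}$ into prime-to-$p$ Hecke orbits, shows each orbit has size at least the full count via the mass formula and strong approximation, and concludes there is only one orbit.

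I expect the main obstacle to be statement (5), specifically the bookkeeping that the prime-to-$p$ Hecke correspondences $\calH_\ell$ (as $\ell$ ranges over all primes $\ne p$) generate the full $G_x(\A_f^{(p)})$-action and that one may simultaneously ignore the behaviour at $p$ (where $G_x$ is anisotropic modulo center over $\Q_p$, since $x$ is superspecial) without losing transitivity. This is the standard subtlety in Hecke-orbit arguments on the basic/supersingular locus: the local group at $p$ is compact, so the $p$-adic component of the Hecke algebra contributes nothing, and one genuinely needs the global strong-approximation input rather than a naive monodromy statement. The analogous reduction in the inert case is handled in \cite{yu:thesis} and \cite{yu:mass_hb}; here one simply notes that replacing "$p$ inert, $\calO$ a field" by "$p$ unramified, $\calO=\prod_v\calO_v$" changes only the shape of the local group $G_x\otimes\Q_p=\prod_v(G_x\otimes\Q_p)_v$, each factor still anisotropic modulo center, so the argument is unaffected. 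The remaining statements (1)--(4) I regard as essentially immediate transcriptions of Goren--Oort once the completely-split structure of $\O\otimes W$ is in place.
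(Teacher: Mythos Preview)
Your proposal is essentially correct, but differs from the paper's proof in a few instructive places.

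For (2), the paper explicitly flags that quasi-affineness is a \emph{global} property and does not follow from the inert case; your partial-Hasse-invariant route works (and the paper mentions modifying Goren--Oort as one option), but your phrase ``purely local in the Dieudonn\'e theory'' misdescribes why. The paper instead offers a slicker argument: the forgetful map $b:\calM\to\calA_{g,d,n}\otimes\Fpbar$ carries $\calM_{\ul a}$ into a single Ekedahl--Oort stratum of the Siegel space, which is quasi-affine by Oort \cite{oort:eo}, and $b$ is finite.

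For (3), the paper invokes Grothendieck's semi-stable reduction theorem rather than extending the Hasse invariant over a compactification. Your approach is also standard, but ``the boundary parametrizes degenerating objects which are generically ordinary'' is not the right justification---what is needed is that the Hasse invariant extends to the toroidal compactification and is a unit along the boundary (the Tate objects there are totally toric, with $O_F\otimes\Gm$ as torus part).

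For (5), you have the correct key input---strong approximation for $G_{x_0}$---but the mass-formula detour is unnecessary and your formulation slides between ``$\ell$-adic'' and ``prime-to-$p$'' Hecke orbits. The statement is about a \emph{single} prime $\ell$. The paper simply identifies $\calM_{\ul 0}$ with $G_{x_0}(\Q)\backslash G_{x_0}(\A_f)/K_n$ directly (using that all superspecial points satisfy conditions (i)--(ii) of Subsection~\ref{41}) and then applies strong approximation with $\ell$ as the distinguished place to get surjectivity of $G_{x_0}(\Q_\ell)\to G_{x_0}(\Q)\backslash G_{x_0}(\A_f)/K_n$. No counting is needed.
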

\begin{proof}
(1) It is easy to construct a superspecial point in
    $\calM$. Indeed, one constructs a separably polarized superspecial 
    abelian $O_F$-variety, then one chooses a point within its
    prime-to-$p$ isogeny class so that it lies in $\calM$. 
    Then one constructs a deformation of this point so that the
    generic point has the given alpha type $\ul a$. Such a construction
    is a local problem, and it reduces to inert cases. This proves the
    statement (1) 


(2) This is a global property; it does not follow directly from
    the result of inert cases. One can slightly modify the proof in
    \cite{goren-oort} to make it work. 
    Alternatively, consider the forgetful morphism
    $b:\calM\to \calA_{g,d,n}\otimes \Fpbar$, for some positive
    integer $d$ with $(d,p)=1$. Then the image $b(\calM_{\ul a})$ is
    contained in an Ekedahl-Oort stratum $S_\varphi$ of
    $\calA_{g,d,n}\otimes \Fpbar$. Since $S_\varphi$ is quasi-affine
    \cite{oort:eo}, the image $b(\calM_{\ul a})$ is also
    quasi-affine. Since the morphism $b$ is finite, the stratum
    $\calM_{\ul a}$ is quasi-affine.  

(3) This follows from the semi-stable reduction theorem for abelian
    varieties due to Grothendieck \cite{sga7_1}. 

(4) This is a local property, and hence follows directly from the
    results of inert cases.

(5) Let $x_0=(A_0,\lambda_0,\iota_0,\eta_0)$ be a superspecial point
    in $\calM$. Define $\Lambda_{x_0,n}$, $G_{x_0}$, $K_n$ as in
    Subsection ~\ref{41}. Note that any point in $\calM_{\ul 0}$
    satisfies the conditions (i) and (ii) in Subsection~\ref{41} (see
    \cite[Lemma~\ref{43}]{yu:thesis}). Therefore, we
    have $\Lambda_{x_0,n}=\calM_{\ul 0}$ and have the 
    double coset description
\begin{equation}
  \label{eq:521}
  \calM_{\ul 0} \simeq G_{x_0}(\Q)\backslash G_{x_0}(\A_f)/K_{n}
\end{equation}
as (\ref{eq:411}). By the strong approximation \cite[Theorem 7.12,
p.~427]{platonov-rapinchuk:agnt}, the natural map 
$G_{x_0}(\Q_\ell) \to G_{x_0}(\Q)\backslash G_{x_0}(\A_f)/K_{n}$ is
surjective. This shows that the $\ell$-adic Hecke orbit
$\calH_\ell(\ul A_0)$ is equal to $\calM_{\ul 0}$, and hence that 
the action of $\ell$-adic Hecke correspondences on the set 
$\calM_{\ul 0}$ is transitive. \qed
\end{proof}

\subsection{}{\bf Proof of Theorem~\ref{52}.}
Using (1), (2) and (3) of Proposition~\ref{53}, 
one shows that the closure of any irreducible
component $W$ of $\calM_{\ul a}$ contains a point in $\calM_{\ul
  0}$. By  Proposition~\ref{53} (4), 
any point in $\calM_{\ul 0}$ is contained in $\ol W$
for a unique irreducible component $W$ of $\calM_{\ul a}$. This shows
that there is a surjective $\ell$-adic Hecke equivariant map
\[ i: \calM_{\ul 0}\to \Pi_0(\ol \calM_{\ul a})=\Pi_0(\calM_{\ul
  a}). \]
By Proposition~\ref{53} (5), 
the set $\Pi_0(\calM_{\ul a})$ is $\ell$-adic Hecke
transitive. \qed   



An immediate consequence of Theorems~\ref{24} and~\ref{52}
is the following

\begin{cor} \label{55}
  Any non-supersingular stratum $\calM_{\ul a}$ is irreducible.
\end{cor}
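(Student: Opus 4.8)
The plan is to apply Chai's criterion, Theorem~\ref{24}, with $Z=\calM_{\ul a}$; the proof then consists of verifying the two hypotheses of that theorem for a non-supersingular alpha stratum. First I would recall that the alpha type is invariant under $O_F$-linear isogenies of degree prime to $p$, and that every $\ell$-adic Hecke correspondence is built out of such isogenies since $\ell\neq p$. Hence $\calM_{\ul a}$ is an $\ell$-adic Hecke invariant, locally closed subset of $\calM$; this is the same observation already used in the inert case before Theorem~\ref{241}, and it carries over verbatim to the unramified setting considered in this section.

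The second hypothesis, that $\Pi_0(\calM_{\ul a})$ be $\ell$-adic Hecke transitive, is exactly Theorem~\ref{52}. It then remains only to unwind the definitions: by the definition of a supersingular alpha stratum, saying that $\calM_{\ul a}$ is non-supersingular means precisely that its maximal points are not supersingular, which is the remaining hypothesis of Theorem~\ref{24}. With $Z=\calM_{\ul a}$ all hypotheses are met, and Theorem~\ref{24} yields that $\calM_{\ul a}$ is irreducible.

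There is no genuine obstacle at this stage: the substance has already been placed in Theorem~\ref{24}, whose proof (Chai's monodromy argument on Hecke invariant subvarieties) is given in Section~\ref{sec:06}, and in Theorem~\ref{52}, proved above via strong approximation for $G_{x_0}$ together with the specialization of the components of $\calM_{\ul a}$ onto the superspecial locus $\calM_{\ul 0}$. The one point worth flagging is that Theorem~\ref{24} was deliberately formulated without the inertness assumption on $p$, so it applies directly in the unramified situation; this is noted immediately after its statement.
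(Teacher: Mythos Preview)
Your proof is correct and matches the paper's own argument: the corollary is stated as an immediate consequence of Theorem~\ref{24} and Theorem~\ref{52}, and you have supplied exactly that deduction, together with the (already noted) Hecke invariance of $\calM_{\ul a}$ and the observation that Theorem~\ref{24} is valid in the unramified setting.
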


\subsection{}\label{56}
Let $\ul a=(\ul a_v)_v\in \Delta$ be a supersingular alpha type. If
$f_v$ is odd, then $|\ul a_v|=f_v$. If $f_v$ is even, then either 
$\ul a_v \preceq  (1,0,\dots, 1,0)$ or $\ul a_v \preceq  (0,1,\dots, 0,1)$.
Define 
\[ \bbP_1:=\{v\in \bbP\, |\, \text{$f_v$ is odd}\, \} \]
\[ \bbP_2(\ul a):=\{v\in \bbP\, |\, \text{$f_v$ is even and $|\ul
  a_v|=f_v$}\, \} \] 
\[ \bbP_3(\ul a):=\{v\in \bbP\, |\, \text{$f_v$ is even and $|\ul
  a_v|<f_v$}\, \} \] 

\begin{thm}\label{57} Let $\ul a\in \Delta$ be a supersingular alpha
  type. Then 
  any irreducible component of $\ol \calM_{\ul a}$ is isomorphic to
  $(\bfP^1)^{g-|\ul a|}$ and  
  \begin{equation}
    \label{eq:571}
    |\Pi_0(\calM_{\ul a})|=[G(\Z):\Gamma(n)]\left 
    [\frac{-1}{2}\right ]^g
     \zeta_F(-1) \prod_{v\in \bbP} c_v, 
  \end{equation}
where 
\begin{equation}
  \label{eq:572}
  c_v:=
  \begin{cases}
    p^{f_v}-1   & \text{if $v\in \bbP_1$;}\\
    p^{f_v}+1   & \text{if $v\in \bbP_2(\ul a)$;} \\
    1 &  \text{if $v\in \bbP_3(\ul a)$.}\\
  \end{cases}
\end{equation}
\end{thm}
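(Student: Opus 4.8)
The plan is to reduce the unramified case to the inert computation already carried out in Section~\ref{sec:04}, exploiting the product decomposition $\O=\oplus_{v\in\bbP}\O_v$ and the corresponding decomposition $\Delta=\prod_{v\in\bbP}\Delta_v$. First I would fix a supersingular $\ul a=(\ul a_v)_v$ and choose a superspecial point $\ul A_0=(A_0,\lambda_0,\iota_0,\eta_0)$ in $\calM$ whose covariant \dieu module $\ul M_0$ has, at each place $v$, the superspecial type forced by $\ul a_v$: namely $|\ul a_v|=f_v$ when $v\in\bbP_1\cup\bbP_2(\ul a)$, and $\ul a_v\preceq(1,0,\dots,1,0)$ (up to the symmetry swapping the two superspecial types) when $v\in\bbP_3(\ul a)$. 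The $p$-divisible group $A_0[p^\infty]$ splits as $\prod_{v}A_0[v^\infty]$, and the whole deformation-theoretic analysis of Section~\ref{sec:04} is local at $p$, hence factors through this product.

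Next I would run the construction of Proposition~\ref{44} place by place. For $v\in\bbP_3(\ul a)$ the argument of Subsection~\ref{43} applies verbatim to the $v$-component: passing to $N_v:=(F,V)M_{0,v}$ and then to $N_{v,-1}:=(F,V)^{-1}N_v$ produces a space $\calX_v\cong(\bfP^1)^{f_v/2}$ of \dieu $\O_v$-modules, and the chain-basis computation of Lemma~\ref{46} shows that the locus with $v$-alpha type $\preceq\ul a_v$ inside the component is cut out by equations $x_{j-1}^{p^2}=x_{j+1}$ for odd $j\in\tau(\ul a_v)$, hence is isomorphic to $(\bfP^1)^{f_v-|\ul a_v|}$. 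For $v\in\bbP_1\cup\bbP_2(\ul a)$ the $v$-component is already superspecial, so $\calX_v$ is a point. Multiplying over $v$, any irreducible component of $\ol\calM_{\ul a}$ is $\prod_v(\bfP^1)^{f_v-|\ul a_v|}=(\bfP^1)^{g-|\ul a|}$, and $\Pi_0(\ol\calM_{\ul a})$ is in bijection with the set $\Lambda$ of isomorphism classes of tuples $\ul B'$ matching $\ul B$ at $p$ and at all $\ell\neq p$, where $\ul B\to\ul A_0$ is the $p$-power isogeny with $M(\ul B)=\oplus_v N_v$ at the places of $\bbP_3$ and $M(\ul B)_v=M_{0,v}$ elsewhere. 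As in Section~\ref{sec:04} this $\Lambda$ is exactly $\Lambda_{\ul B,n}$, and the tangent-space computation (Lemma~9.2 of \cite{yu:thesis}) still shows $\pr$ is étale, hence an isomorphism, giving $|\Pi_0(\calM_{\ul a})|=|\Lambda_{\ul B,n}|$.

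It then remains to evaluate $|\Lambda_{\ul B,n}|$ by the mass formula~(\ref{eq:412})--(\ref{eq:414}). The \dieu module of $\ul B$ has $v$-component of superspecial type $(0,2,\dots)$-alike normalization when $v\in\bbP_3(\ul a)$ and of superspecial type $(e_1,e_2)$ with $e_1=e_2$ or $e_1<e_2$ according as the $v$-alpha type is $(1,1,\dots)$ or $(1,0,\dots)$-type — so the local factor $c_p$ of~(\ref{eq:413}) becomes a product of local factors $c_v$, one for each $v\in\bbP$, equal to $p^{f_v}-1$ for $v\in\bbP_1$, to $p^{f_v}+1$ for $v\in\bbP_2(\ul a)$, and to $1$ for $v\in\bbP_3(\ul a)$; this is precisely the point where~(\ref{eq:412}) must be recorded in the factored form $\prod_v c_v$ rather than the single $c_p$ of the inert case, and it follows from the product decomposition of $G_{x_0}$ at $p$ together with the explicit local mass computations of \cite{yu:mass_hb}. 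Combining with $[G_{x_0}(\hat\Z):K_n]=[G(\Z):\Gamma(n)]$ yields~(\ref{eq:571})--(\ref{eq:572}). The main obstacle is the bookkeeping in identifying the correct superspecial type at each $v\in\bbP_3(\ul a)$ and checking that the étaleness of $\pr$ in Proposition~\ref{44} survives when the local types vary from place to place; once that product structure is in place, both the geometric statement and the mass count follow by assembling the inert-case results factor by factor.
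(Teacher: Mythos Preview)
Your proposal is correct and follows essentially the same route as the paper's own proof: choose $\ul A_0\in\calM_{\ul a_0}$ with $\ul a_{0,v}=(1,0,\dots,1,0)$ for $v\in\bbP_3(\ul a)$ and $\ul a_{0,v}=\ul a_v$ otherwise, pass to $N_v=(F,V)M_{0,v}$ at the places in $\bbP_3(\ul a)$ to obtain $\ul B$, invoke Proposition~\ref{44} and Lemma~\ref{46} factor by factor to get $\Pi_0(\ol\calM_{\ul a})\simeq\Lambda$ with each component $(\bfP^1)^{g-|\ul a|}$, and then apply the mass formula of \cite{yu:mass_hb} with the local factors $c_v$ read off from the superspecial type of $N_v$. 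One terminological slip: the point $\ul A_0$ you start from is not superspecial (its $v$-alpha type is $(1,0,\dots,1,0)$ at $v\in\bbP_3(\ul a)$, not $(1,\dots,1)$); it is the auxiliary $\ul B$ whose \dieu module is superspecial at every place, with type $(0,0)$ at $\bbP_3(\ul a)$ and $(0,1)$ at $\bbP_1\cup\bbP_2(\ul a)$, exactly as the paper records.
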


\begin{proof}
  Define $\ul a_0=(\ul a_{0,v})_v\in \Delta$ by 
\[ \ul a_{0,v}=
\begin{cases}
  (1,0,\dots,1,0) & \text{if $v\in \bbP_3(\ul a)$;}\\
  \ul a_v & \text{otherwise.}
\end{cases} \]
\end{proof}
We may assume that $\ul a \preceq  \ul a_0$ due to symmetry. Choose any
point $\ul A_0$ in $\calM_{\ul a_0}(k)$. Let $\ul M_0$ be the 
covariant \dieu module of $\ul A_0$. Define $N=\oplus N_v\subset
M_0=\oplus M_{0,v}$ the \dieu
$\O$-submodule with the induced quasi-polarization by
\[ N_v:=\begin{cases}
  (F,V)M_{0,v} & \text{if $v\in \bbP_3(\ul a)$;}\\
  M_{0,v} & \text{otherwise.}
\end{cases} \]
Then there is a tuple $\ul B=(B,\lambda_B,\iota_B,\eta_B)$ and an
$O_F$-linear isogeny $\varphi:\ul B\to \ul A_0$ of $p$-power degree,
compatible with additional structures, such that $M(B)= N\subset M_0$. 
Let $\calX=\prod_{v\in \bbP_3(\ul a)} \calX_v$, where $\calX_v$ is
  defined as $\calX$ in Subsection~\ref{43}. One has $\calX_v\simeq
  (\bfP^1)^{f_v/2}$.  
Define the set $\Lambda$ for $\ul B$ as in Subsection~\ref{43}. By
Proposition~\ref{44} we have an isomorphism 
$\coprod_{\xi\in \Lambda} \calX\simeq \ol \calM_{\ul a_0}$. Let
$\xi\in \Lambda$ and $\ol \calM_{\ul a_0,\xi}\simeq (\bfP^1)^{g-|\ul
  a_0|}$ be the corresponding component. By Lemma~\ref{46}, we show 
that $\ol \calM_{\ul a}\cap  \ol \calM_{\ul a_0,\xi}
\simeq (\bfP^1)^{g-|\ul a|}$. Therefore, we have
\begin{equation}
  \label{eq:573}
  \Pi_0(\ol \calM_{\ul a})\simeq \Pi_0(\ol \calM_{\ul
  a_0})\simeq \Lambda.   
\end{equation}

The alpha type of the factor $N_v$ is $(0,2,\dots, 0,2)$ if $v\in
\bbP_3(\ul a)$  and $(1,1,\dots)$ otherwise. Hence $N_v$ has
superspecial type
$(e_1,e_2)=(0,0)$ if $v\in \bbP_3(\ul a)$  and $(e_1,e_2)=(0,1)$
otherwise (Subsection~\ref{41}). By the mass formula 
\cite[Theorem 3.7 and
Subsection 4.6]{yu:mass_hb} (cf.~(\ref{eq:414})), we get
\[ |\Lambda|=[G(\Z):\Gamma(n)]\left [\frac{-1}{2}\right ]^g
     \zeta_F(-1) \prod_{v\in \bbP} c_v, \]
where $c_v$ is as above. This completes the proof.\qed

\subsection{}
\label{58}
Define the function $w':\Delta\to \R$ by
\begin{equation}
  \label{eq:581}
  w'(\ul a):=
  \begin{cases}
    [G(\Z):\Gamma(n)]\left [\frac{-1}{2}\right ]^g \zeta_F(-1) &
    \text{if $\ul a\in \Delta^{\rm gen}_{\rm ss}$;} \\
    \prod_{v\in \bbP} w(\ul a_v) & \text{otherwise,} \\
  \end{cases}
\end{equation}
where $w(\ul a_v)$ is the function as in (\ref{eq:221}). It is clear
that $w'(\ul a)\neq 0$ if and only if $\ul a\in \Delta^{\rm gen}$. It 
is rather unclear but indeed a fact that $w'(\ul a)\in \Z_{\ge
  0}$ (by (\ref{eq:592})).  

\begin{thm}\label{59}
Notation as above. We have 
\begin{equation}
  \label{eq:591}
  |\Pi_0(\calM_{\Gamma_0(p)})|=\sum_{\ul a\in \Delta^{\rm gen}} 
  w'(\ul a),  
\end{equation}
\end{thm}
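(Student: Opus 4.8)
The plan is to transport the arguments of Sections~\ref{sec:03} and~\ref{sec:04} to the unramified setting by exploiting the splitting $\calO=\oplus_{v\in\bbP}\calO_v$, under which every covariant \dieu $\calO$-module decomposes as $M=\oplus_{v\in\bbP}M_v$ with $M_v$ an $\calO_v$-module. First I would establish the unramified analogues of Theorems~\ref{22} and~\ref{23}. For a point $x\in\calM_{\ul a}(k)$ with alpha index $\tau=\tau(\ul a)=\coprod_{v}\tau_v$, the isomorphism $f^{-1}(x)_{\rm red}\simeq\calX_\tau$ of Lemma~\ref{32} holds as before, and $\calX_\tau\simeq\prod_{v\in\bbP}\calX_{\tau_v}$, where $\calX_{\tau_v}$ is the closed subscheme of $(\bfP^1)^{f_v}$ cut out by the equations of Lemma~\ref{33} relative to $\calO_v$. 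Applying Proposition~\ref{35} factor by factor, $\dim\calX_\tau\le|\ul a|=\sum_v|\ul a_v|$ with equality precisely when every $\ul a_v$ is generic, i.e.\ when $\ul a\in\Delta^{\rm gen}$, and in that case $\calX_\tau$ has $\prod_{v\in\bbP}w(\ul a_v)$ irreducible components of top dimension $|\ul a|$. The gluing argument of Subsection~\ref{36} then goes through verbatim, since the ambiguity automorphism of $\ol M_0$ respects the factorization and hence preserves each component: the top-dimensional components of $\calX_\tau$ assemble into well-defined closed subvarieties of $\calM_{\Gamma_0(p),\ul a}$ on which $f_{\ul a}$ is proper and surjective onto $\calM_{\ul a}$. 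It follows that $\dim\calM_{\Gamma_0(p),\ul a}=g$ if and only if $\ul a\in\Delta^{\rm gen}$, and that for such $\ul a$ the scheme $\calM_{\Gamma_0(p),\ul a}$ has exactly $\bigl(\prod_{v\in\bbP}w(\ul a_v)\bigr)\,|\Pi_0(\calM_{\ul a})|$ irreducible components of dimension $g$.

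Since $\calM_{\Gamma_0(p)}$ is equidimensional of dimension $g$ — a local statement at $p$, reducing to the inert case of \cite{stamm} — and $\calM_{\Gamma_0(p)}=\coprod_{\ul a\in\Delta}\calM_{\Gamma_0(p),\ul a}$, summing the above count over the top-dimensional strata gives
\[ |\Pi_0(\calM_{\Gamma_0(p)})|=\sum_{\ul a\in\Delta^{\rm gen}}\Bigl(\prod_{v\in\bbP}w(\ul a_v)\Bigr)\,|\Pi_0(\calM_{\ul a})|. \]
It then remains to identify each summand with $w'(\ul a)$. If $\ul a\in\Delta^{\rm gen}$ is not supersingular, Corollary~\ref{55} (which rests on Theorems~\ref{24} and~\ref{52}) gives $|\Pi_0(\calM_{\ul a})|=1$, so the summand is $\prod_{v}w(\ul a_v)=w'(\ul a)$ by (\ref{eq:581}). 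If $\ul a\in\Delta^{\rm gen}_{\rm ss}$, then every $f_v$ is even and each $\ul a_v$ is one of the two alternating types $(1,0,\dots,1,0)$, $(0,1,\dots,0,1)$, so $|\ul a_v|=f_v/2<f_v$; hence $w(\ul a_v)=1$ and $v\in\bbP_3(\ul a)$ for every $v$, and Theorem~\ref{57} gives $|\Pi_0(\calM_{\ul a})|=[G(\Z):\Gamma(n)]\left[\frac{-1}{2}\right]^g\zeta_F(-1)$ because $c_v=1$ for all $v\in\bbP_3(\ul a)$. Thus the summand equals $[G(\Z):\Gamma(n)]\left[\frac{-1}{2}\right]^g\zeta_F(-1)=w'(\ul a)$, and combining the two cases proves (\ref{eq:591}); as a byproduct each $w'(\ul a)$ is a number of irreducible components, hence lies in $\Z_{\ge0}$.

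The only step requiring genuine work is the first paragraph: verifying that the \dieu-module computations of Section~\ref{sec:03}, and especially the gluing of fiber components in Subsection~\ref{36}, survive the passage from a single inert place to a family $\{v\}_{v\in\bbP}$. This is essentially formal given the product decomposition of $\calO$-modules, but it is where one must be careful; everything else is bookkeeping, with the non-supersingular contribution handled by Chai's theorem (Theorem~\ref{24}) together with Hecke transitivity (Theorem~\ref{52}), and the supersingular contribution already isolated in Theorem~\ref{57}.
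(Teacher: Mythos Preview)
Your proposal is correct and follows essentially the same strategy as the paper's proof: split into non-supersingular and supersingular generic alpha types, use the local computations of Section~\ref{sec:03} (factored over the places $v\in\bbP$) together with Corollary~\ref{55} for the former, and Theorem~\ref{57} with all $c_v=1$ for the latter. You are more explicit than the paper about the product decomposition $\calX_\tau\simeq\prod_v\calX_{\tau_v}$ and the equidimensionality input, but the argument is the same.
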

\begin{proof}
  Suppose that $\ul a$ is a non-supersingular generic alpha type.
  It follows from the local computation in Section~\ref{sec:03} and 
  Corollary~\ref{55} that
  $\calM_{\Gamma_0(p), \ul a}$ has $w'(\ul a)$ irreducible components
  of dimension $g$.

  Suppose that $\ul a$ is a supersingular generic alpha type. Every
  fiber of the map $f_{\ul a}$ has one irreducible component of
  dimension $|\ul a|$ (Section~\ref{sec:03}). Thus,  
  $\calM_{\Gamma_0(p),\ul a}$ has $|\Pi_0(\calM_{\ul a})|$ irreducible
  components of dimension $g$. It follows from Theorem~\ref{57} that
  \begin{equation}\label{eq:592}
  |\Pi_0(\calM_{\Gamma_0(p),\ul a})|=
  |\Pi_0(\calM_{\ul a})|=w'(\ul a).
  \end{equation}
   This completes the proof. \qed
\end{proof}

We can rephrase Theorem~\ref{59} by an elementary combinatorial result 
(Lemma~\ref{28}) as follows 

\begin{thm}\label{510}
We have
\begin{equation}
  \label{eq:5101}
  |\Pi_0(\calM_{\Gamma_0(p)})|=2^g+\sum_{\ul a\in \Delta^{\rm
   gen}_{\rm ss}}  (w'(\ul a)-1), 
\end{equation}
\end{thm}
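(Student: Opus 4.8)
The plan is to derive Theorem~\ref{510} purely formally from Theorem~\ref{59} together with the combinatorial identity in Lemma~\ref{28}, exactly as was done in the inert case when passing from Theorem~\ref{27} to Theorem~\ref{211}. The key observation is that the function $w'$ defined in \parref{eq:581} agrees with $\ul a\mapsto \prod_{v\in \bbP} w(\ul a_v)$ on the complement of $\Delta^{\rm gen}_{\rm ss}$, and that on all of $\Delta$ this product function vanishes precisely off $\Delta^{\rm gen}$ (since $w(\ul a_v)>0$ iff $\ul a_v$ is generic, by the remark following \parref{eq:221}). Moreover, for $\ul a\in\Delta^{\rm gen}_{\rm ss}$ one has $w(\ul a_v)=1$ for every $v\in\bbP$: indeed such an $\ul a_v$ is generic and supersingular, forcing $f_v=2d_v$ even with $|\ul a_v|=d_v$, i.e. $\ul a_v=(1,0,\dots,1,0)$ or $(0,1,\dots,0,1)$, and in either case formula \parref{eq:221} gives $\prod_j (n_{j+1}-n_j-1)=1$. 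Hence $\prod_{v\in\bbP} w(\ul a_v)=1$ for $\ul a\in\Delta^{\rm gen}_{\rm ss}$.

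Granting this, I would write
\[
\sum_{\ul a\in\Delta^{\rm gen}} w'(\ul a)
= \sum_{\ul a\in\Delta^{\rm gen}\setminus\Delta^{\rm gen}_{\rm ss}} \prod_{v\in\bbP} w(\ul a_v)
+ \sum_{\ul a\in\Delta^{\rm gen}_{\rm ss}} w'(\ul a).
\]
Adding and subtracting $\sum_{\ul a\in\Delta^{\rm gen}_{\rm ss}} \prod_{v\in\bbP} w(\ul a_v) = |\Delta^{\rm gen}_{\rm ss}|$ turns the first sum into $\sum_{\ul a\in\Delta^{\rm gen}} \prod_{v\in\bbP} w(\ul a_v)$ minus $|\Delta^{\rm gen}_{\rm ss}|$, and regroups the remainder as $\sum_{\ul a\in\Delta^{\rm gen}_{\rm ss}} (w'(\ul a)-1)$. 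Since $\prod_{v\in\bbP} w(\ul a_v)=0$ for $\ul a\notin\Delta^{\rm gen}$, the first term extends to a sum over all of $\Delta=\prod_{v\in\bbP}\Delta_v$, where it factors as $\prod_{v\in\bbP}\bigl(\sum_{\tau_v\subset \scrI_v} w(\tau_v)\bigr)$. By Lemma~\ref{28} applied to each $\scrI_v\simeq\Z/f_v\Z$, each factor equals $2^{f_v}$, and $\prod_{v\in\bbP} 2^{f_v} = 2^{\sum_v f_v} = 2^g$ since $\sum_{v\in\bbP} f_v=g$. Combining everything yields \parref{eq:5101}.

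There is essentially no hard step here; the argument is a bookkeeping exercise identical in spirit to Subsection~\ref{37}. The one point that deserves care — and which I would state as a short lemma if the referee wants it spelled out — is the factorization of the sum over $\Delta$ as a product over $v\in\bbP$, which rests on $\Delta=\prod_{v}\Delta_v$ and on $w'$ restricted to the non-supersingular generic locus being multiplicative over $\bbP$; both are built into the definitions in Subsection~\ref{51} and \parref{eq:581}. If anything is an obstacle it is purely notational: making sure that the "otherwise" branch of \parref{eq:581} is being used on exactly the index set over which we extend to all of $\Delta$, and that the correction terms for $\Delta^{\rm gen}_{\rm ss}$ are accounted for exactly once. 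Since $\Delta^{\rm gen}_{\rm ss}=\varnothing$ whenever some $f_v$ is odd (as recorded in Subsection~\ref{51}), in that case \parref{eq:5101} reduces to $|\Pi_0(\calM_{\Gamma_0(p)})|=2^g$, consistent with the odd-degree statement in Theorem~\ref{27}, which is a useful sanity check.
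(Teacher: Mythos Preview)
Your proposal is correct and takes essentially the same approach as the paper: the paper merely states that Theorem~\ref{510} is a rephrasing of Theorem~\ref{59} via Lemma~\ref{28}, and you have supplied precisely the bookkeeping it leaves implicit --- namely the factorization $\sum_{\ul a\in\Delta}\prod_v w(\ul a_v)=\prod_v 2^{f_v}=2^g$ and the observation that $\prod_v w(\ul a_v)=1$ for $\ul a\in\Delta^{\rm gen}_{\rm ss}$.
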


Similar to Theorem~\ref{212}, we have the following

\begin{thm}\label{511} \

{\rm (1)}  Let $\{s(j_v,f_v); v\in \bbP\}$ be a tuple of slope
sequences indexed by $\bbP$ and suppose that $0\le j_v< \lceil
\frac{f_v}{2}\rceil$ for some $v\in \bbP$. 
Then the moduli space $\calM_{\Gamma_0(p)}$ 
has exactly 
\[\prod_{v\in \bbP} 2 \begin{pmatrix} 
    f_v\\2j_v \end{pmatrix}\] 
irreducible components whose maximal point has
  slope sequence  $\{s(j_v,f_v); v\in \bbP\}$.

{\rm (2)} The moduli space $\calM_{\Gamma_0(p)}$ has  
\[ |\Delta^{\rm gen}_{\rm ss}|\cdot [G(\Z):\Gamma(n)]\cdot    
  \left [ \frac{-1}{2} \right ]^g \cdot \zeta_F(-1) \]
supersingular irreducible components.
\end{thm}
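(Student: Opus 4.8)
The plan is to deduce Theorem~\ref{511} from the already-established decomposition of $\calM_{\Gamma_0(p)}$ into the strata $\calM_{\Gamma_0(p),\ul a}$ together with the slope information carried by Theorem~\ref{255} (in its unramified incarnation) and the component count of Theorem~\ref{59}/\ref{510}. The point of departure is that every maximal point $\eta$ of $\calM_{\Gamma_0(p)}$ lies in a unique stratum $\calM_{\Gamma_0(p),\ul a}$ with $\ul a=\ul a(f(\eta))$ a \emph{generic} alpha type (by Theorem~\ref{22}), and that the slope sequence of $\eta$ agrees with that of a maximal point of the generic alpha stratum $\calM_{\ul a}$ in $\calM$. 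Since $p$ is only assumed unramified, the Newton polygon decomposes over the primes $v\in\bbP$: one has $\O=\oplus_v\O_v$, and Theorem~\ref{255} applied prime-by-prime shows that the slope sequence of a maximal point of $\calM_{\ul a}$ is the tuple $\{s(|\ul a_v|,f_v)\,;\,v\in\bbP\}$, where $\ul a=(\ul a_v)_v$. So the first step is to record this prime-decomposed form of Theorem~\ref{255} and to observe that $|\ul a_v|$ ranges over $\{0,1,\dots,\lceil f_v/2\rceil\}$ as $\ul a_v$ runs over generic types, with $|\ul a_v|<\lceil f_v/2\rceil$ exactly when $\ul a_v$ is non-supersingular (and $|\ul a_v|=f_v/2$, $f_v$ even, giving the supersingular case).

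For part (1): fix a tuple $(j_v)_{v\in\bbP}$ with $0\le j_v<\lceil f_v/2\rceil$ for some $v$; this hypothesis guarantees that $\{s(j_v,f_v)\}$ is a non-supersingular slope sequence, hence the alpha type $\ul a$ with $|\ul a_v|=j_v$ is non-supersingular generic, so $w'(\ul a)=\prod_v w(\ul a_v)$ by \eqref{eq:581}. By Corollary~\ref{55}, the strata $\calM_{\ul a}$ with these $|\ul a_v|$ values are each irreducible; and by the local analysis of Section~\ref{sec:03} (Theorem~\ref{23}~(1) prime-by-prime, i.e.\ $f^{-1}(x)$ has $w(\ul a)=\prod_v w(\ul a_v)$ maximal-dimensional components), the number of maximal points of $\calM_{\Gamma_0(p)}$ mapping into $\calM_{\ul a}$ equals $w'(\ul a)=\prod_v w(\ul a_v)$. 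Summing over all generic $\ul a$ with the prescribed $|\ul a_v|=j_v$ and invoking the identity $\sum_{|\ul a_v|=j_v}w(\ul a_v)=2\binom{f_v}{2j_v}$ (the prime-$v$ instance of the combinatorial computation in Subsection~\ref{37}, cf.\ Lemma~\ref{28}), the total is $\prod_{v\in\bbP}2\binom{f_v}{2j_v}$, which is the assertion. For part (2): the supersingular irreducible components are precisely those lying in strata $\calM_{\Gamma_0(p),\ul a}$ with $\ul a\in\Delta^{\rm gen}_{\rm ss}$; by \eqref{eq:592} each such stratum contributes $w'(\ul a)=[G(\Z):\Gamma(n)][\tfrac{-1}{2}]^g\zeta_F(-1)$ components, and there are $|\Delta^{\rm gen}_{\rm ss}|$ such $\ul a$, giving the stated product.

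I expect the only genuine subtlety to be the \emph{separation} of the maximal points by slope sequence — that is, checking that two distinct generic alpha types with the same tuple $(|\ul a_v|)_v$ do not produce maximal points that are accidentally identified, and conversely that every maximal point of the prescribed slope really arises this way and none is missed. This follows from the injectivity of $\ul a\mapsto(\calM_{\Gamma_0(p),\ul a})$ as a decomposition into \emph{disjoint} locally closed subschemes (each point of $\calM$ has a well-defined alpha type, so the $\calM_{\ul a}$ are disjoint, hence so are the $\calM_{\Gamma_0(p),\ul a}$), combined with the fact that the slope sequence of a maximal point determines the multiset $\{(|\ul a_v|,f_v)\}$ but \emph{not} the alpha type itself — so one must count over all $\ul a$ with the given $|\ul a_v|$, which is exactly what the $2\binom{f_v}{2j_v}$ factor does. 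The remaining ingredients — irreducibility of non-supersingular strata (Corollary~\ref{55}), the fiber count $w(\ul a_v)$ prime-by-prime, and the combinatorial identity — are all either quoted or immediate, so the argument is essentially bookkeeping once the prime-decomposed form of Theorem~\ref{255} is in place.
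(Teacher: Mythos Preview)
Your proposal is correct and follows essentially the same approach as the paper: the paper presents Theorem~\ref{511} as the direct unramified analogue of Theorem~\ref{212} (stating it with ``Similar to Theorem~\ref{212}, we have the following'' and no separate proof), and your argument is precisely the prime-by-prime adaptation of the proof in Subsection~\ref{38}, using Theorem~\ref{255} componentwise, the fiber count from Section~\ref{sec:03}, Corollary~\ref{55}, the combinatorial identity $\sum_{|\ul a_v|=j_v}w(\ul a_v)=2\binom{f_v}{2j_v}$ from Subsection~\ref{37}, and (\ref{eq:592}) for the supersingular part.
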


\begin{remark} \

(1) The connection of supersingular
strata with class numbers and special zeta values becomes a
standard fact now. If the moduli space $\calM_{\Gamma_0(p)}$ 
contains supersingular irreducible 
components, then it is expected that the special zeta value
$\zeta_F(-1)$ occurs in the formula for
$|\Pi_0(\calM_{\Gamma_0(p)})|$. 
However, the number of irreducible components of a
supersingular stratum is also related to $p$ in general.
It is indeed unexpected
that the number of supersingular  irreducible components of
$\calM_{\Gamma_0(p)}$ turns out  
to be independent of $p$. As a result, the number
$|\Pi_0(\calM_{\Gamma_0(p)})|$ of irreducible components is  
independent of $p$. 
We do not know any direct proof of this fact
without knowing the explicit formula (\ref{eq:5101}). \\


(2) The $p$-adic invariant stratification used in this paper
is nothing but the Ekedahl-Oort stratification 
(see \cite{oort:eo} and \cite{goren-oort}).  
Since a parahoric level structure on an abelian variety $A$ 
is a flag of finite flat subgroup schemes of the
$p$-torsion $A[p]$ that satisfy certain conditions, 
this structure only depends on the isomorphism class of $\ul A[p]$,
but not on $\ul A$. It would be interesting to know the relationship
between the group-theoretic description of Kottwitz-Rapoport strata
and that of Ekedahl-Oort strata by the works of Moonen
\cite{moonen:bt1,moonen:eo} and of Wedhorn \cite{wedhorn:eo}.\\

(3) The irreducibility problem for PEL-type moduli 
spaces $\calM_{K_p}$ with parahoric level structure at $p$ is, as
suggested by this work, related to the same problem for
Ekedahl-Oort strata in $\calM$ (without level at $p$), 
which is of interest in its own right. 
It seems plausible to expect that in any 
irreducible component of $\calM$,  
(i) any non-basic Ekedahl-Oort stratum is irreducible, and (ii) the
number of irreducible components of a basic Ekedahl-Oort stratum is a
{\it single} class number. 

For Siegel moduli spaces, the statement (i) is confirmed in Ekedahl and
van der Geer \cite{ekedahl-vdgeer:eo}, and the statement (ii) is
treated in Harashita \cite{harashita:sseo}. 

For Hilbert-Blumenthal moduli spaces, the statement (i) is essentially 
due to Goren and Oort \cite{goren-oort} and Chai \cite{chai:mono}
(Corollary~\ref{55}), 
and the statement (ii) is confirmed by Theorem~\ref{57} (see
(\ref{eq:573})).

\end{remark}

\section{$\ell$-adic monodromy of Hecke invariant subvarieties}
\label{sec:06}

The goal of this section is to provide a proof of a theorem of Chai on
Hecke 
invariant subvarieties for Hilbert-Blumenthal moduli spaces on which
Theorem~\ref{510} relies.
We follow the proof in Chai \cite{chai:mono} where the Siegel case is
proved. There is no novelty on the proof here and this is purely
expository; the author
is responsible for any inaccuracies and mistakes.
We write this as an independent section; some setup and notation may
be repeated and slightly modified.  
 
\subsection{}

Let $F$ be a totally real number field of degree $g$ and $O_F$ be the 
ring of integers in $F$. Let $V$ be a 2-dimensional vector space over
$F$ and $\psi:V\times V\to \Q$ be a $\Q$-bilinear non-degenerate
alternating form such that $\psi(ax,y)=\psi(x,ay)$ for all $x,y\in V$
and $a\in F$. Let $p$ be a fixed rational prime, not necessarily
unramified in $F$. We choose and fix an $O_F$-lattice
$V_{\Z}\subset V$ so that $V_{\Z}\otimes \Z_p$ is self-dual with
respect to $\psi$. We choose a projective system of
primitive prime-to-$p$-th roots of unity
$\zeta=(\zeta_m)_{(m,p)=1}\subset 
\Qbar\subset \C$. We also fix an embedding $\Qbar \hookrightarrow
\Qpbar$. 
For any prime-to-$p$ integer $m\ge 1$ and any connected
$\Z_{(p)}[\zeta_m]$-scheme $S$, the choice $\zeta$ determines an
isomorphism  
$\zeta_m:\Z/m\Z\isoto \mu_m(S)$, or equivalently, a $\pi_1(S,\bar
s)$-invariant $(1+m\hat \Z^{(p)})^\times$ orbit of isomorphisms $\bar
\zeta_m: \hat \Z^{(p)}\to \hat \Z^{(p)}(1)_{\bar s}$, where $\hat
\Z^{(p)}:=\prod_{\ell\neq p}\hat \Z_\ell$ and $\bar s$ is a geometric
point of $S$. 

Let $G$ be the automorphism group scheme over $\Z$ associated to the
pair $(V_\Z,\psi)$; for any commutative ring $R$, the group of
$R$-valued points is
\begin{equation}\label{eq:601}
  G(R):=\{g\in \GL_{O_F}(V_\Z \otimes_{\Z} R)\,;\,
\psi(g(x),g(y))=\psi(x,y), \ \forall\, x, y\in V_\Z \otimes_{\Z} R\,
\}.
\end{equation}

Let $n\ge 3$ be a prime-to-$p$ positive integer and $\ell$ be a prime
with $(\ell,pn)=1$ and $(\ell,{\rm disc}(\psi))=1$, where ${\rm
  disc}(\psi)$ is the discriminant of $\psi$ on $V_{\Z}$. 
Let $m\ge 0$ be a non-negative integer. Let $U_{n\ell^m}$ be the
kernel of the reduction map  $G(\hat \Z^{(p)})\to G(\hat
\Z^{(p)}/n\ell^m\hat \Z^{(p)})$; this is an open 
compact subgroup of $G(\hat \Z^{(p)})$.

Let $\calD=(F,V,\psi, V_\Z, \zeta)$ be a list of data as above.  
Denote by
${\calM}_{\calD,n\ell^m}$ the moduli space over
$\Z_{(p)}[\zeta_{n\ell^m}]$ 
that parametrizes equivalence classes of objects
$(A,\lambda,\iota,[\eta])_S$ over a connected locally Noetherian 
$\Z_{(p)}[\zeta_{n\ell^m}]$-scheme $S$, where
\begin{itemize}
\item $(A,\lambda)$ is a $p$-principally polarized abelian scheme over
  $S$ 
  of relative dimension $g$,
\item $\iota:O_F\to \End_S(A)$ is a ring monomorphism such that
  $\lambda\circ \iota(a)=\iota(a)^t \circ \lambda$ for all $a\in O_F$,
  and
\item $[\eta]$ is a $\pi_1(S,\bar s)$-invariant
  $U_{n\ell^m}$-orbit of $O_F$-linear isomorphisms 
  \begin{equation}
    \label{eq:611}
    \eta: V_\Z\otimes \hat\Z^{(p)}
  \isoto T^{(p)}(A_{\bar s}):=\prod_{p'\neq p} T_{p'}(A_{\bar s})
  \end{equation}
  such that 
  \begin{equation}
    \label{eq:612}
   e_\lambda(\eta(x),\eta(y))=\bar \zeta_{n\ell^m}(\psi(x,y))\
  ({\rm mod}\ (1+m\hat \Z^{(p)})^\times), \quad 
  \forall\, x, y\in  V_\Z\otimes \hat\Z^{(p)}, 
  \end{equation}
where $e_\lambda$ is the Weil pairing induced by the polarization
$\lambda$ and $\bar s$ is a geometric point of $S$. 
\end{itemize}

We write $[\eta]_{U_{n\ell^m}}$ for $[\eta]$ in order to specify the
level. 
Let $\calM_{n\ell^m}:=\calM_{\calD,n\ell^m}\otimes \Fpbar$ be the
reduction modulo $p$ of the moduli scheme $\calM_{\calD,n\ell^m}$. 
We have a natural morphism $\pi_{m,m'}:\calM_{n\ell^{m'}}\to
\calM_{n\ell^m}$, 
for $m<m'$, 
which sends $(A,\lambda,\iota,[\eta]_{U_{n\ell^{m'}}})$ to $
(A,\lambda,\iota,[ \eta]_{U_{n\ell^{m}}})$. 
Let $\wt \calM_n:=(\calM_{n\ell^m})_{m\ge 0}$ be the tower of this 
projective system.

Let $(\calX,\lambda,\iota, \bar \eta)\to \calM_n$ be the universal
family. The cover $\calM_{n\ell^m}$ over $\calM_{n}$ 
represents the \'etale sheaf 
\begin{equation}
  \label{eq:613}
   \calP_m:=\ul {Isom}_{ \calM_n}((V_\Z/\ell^m V_\Z,\psi),
(\calX[\ell^m],e_\lambda)\, ; \zeta_{\ell^m}) 
\end{equation}
of $O_F$-linear symplectic level-$\ell^m$ structures with respect to
$\zeta_{\ell^m}$. This is a $G(\Z/\ell^m\Z)$-torsor. 
Let $\bar x$ be a
geometric point in $\calM_n$. Choose an $O_F$-linear isomorphism
$y:V\otimes \Z_\ell\simeq T_{\ell}(\calX_{\bar x})$ that is
compatible with the polarizations with respect to $\zeta$. 
This amounts to choose a geometric point in 
$\wt \calM_n$ over the point $\bar x$. The action of the geometric
fundamental group 
$\pi_1(\calM_n,\bar x)$ on the system of fibers 
$(\calX_{\bar x}[\ell^m])_m$ gives rise to the monodromy
representation 
\begin{equation}
  \label{eq:614}
  \rho_{\calM_n,\ell}:\pi_1(\calM_n,\bar x)\to \Aut_{O_F}(
T_\ell(\calX_{\bar  x}),e_\lambda)
\end{equation}
and to the monodromy representation (using the same notation), 
through the choice of $y$,
\begin{equation}
  \label{eq:615}
  \rho_{\calM_n,\ell}:\pi_1(\calM_n,\bar x)\to G(\Z_\ell).
\end{equation}
\begin{lemma}\label{62}
  The map $\rho_{\calM_n,\ell}$ is surjective. 
\end{lemma}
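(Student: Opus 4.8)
The statement asserts the surjectivity of the $\ell$-adic monodromy representation $\rho_{\calM_n,\ell}\colon \pi_1(\calM_n,\bar x)\to G(\Z_\ell)$ for the Hilbert-Blumenthal moduli space over $\Fpbar$. The strategy, following Chai's treatment of the Siegel case, is to first establish surjectivity modulo $\ell$ and then bootstrap to the full $\ell$-adic statement using the structure of $G(\Z_\ell)$ as a (pro-$\ell$ extension of a) group generated by $\ell$-power order elements. More precisely, the plan is: (i) reduce to showing that the image of $\rho_{\calM_n,\ell}$ surjects onto $G(\Z/\ell\Z)$, together with a standard ``no small index subgroup'' argument for the kernel of $G(\Z_\ell)\to G(\Z/\ell\Z)$; (ii) prove the mod-$\ell$ surjectivity by exhibiting enough local monodromy, i.e.\ by finding explicit points of $\calM_n$ (or rather, explicit one-parameter degenerations or deformations near boundary/special points) whose local monodromy contributes unipotent elements generating $G(\Z/\ell\Z)$.

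\textbf{Key steps.} First I would recall that $G$ here is, up to center, a product over the primes $v\mid p$ (or rather over the real places / the factor structure of $O_F\otimes\Q$) of copies of $\SL_2$-type groups; since $\psi$ is a symplectic form on a rank-$2$ $F$-vector space, $G^{\der}$ is essentially $\Res_{F/\Q}\SL_2$, which is simply connected with simple, simply connected fibers at $\ell$. This is exactly the setting in which Chai's monodromy method applies. Second, for the mod-$\ell$ step I would use the connectedness of $\calM_n$ (Theorem~1.1, the Rapoport--Deligne-Pappas irreducibility of the fibers of $\calM_{(L,L^+),n}\to\Spec\Z_{(p)}[\zeta_n]$ — note $\calM_n$ is the special fiber and is geometrically irreducible) so that $\pi_1(\calM_n,\bar x)$ is well-behaved, and then produce enough unipotent elements in the image. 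The cleanest source of such elements is the behavior near a maximal degeneration (a Mumford/Tate curve type point in Rapoport's toroidal compactification), where the local monodromy around the boundary divisor is a product of transvections; alternatively one can use the Picard--Lefschetz type monodromy near the non-ordinary locus. The point is that these transvections, conjugated around $\pi_1(\calM_n)$, generate a subgroup of $G(\Z/\ell\Z)$ surjecting onto the semisimple part, and combined with the determinant/similitude-character information (which is controlled by the level structure condition \eqref{eq:612} with respect to $\zeta$) one gets all of $G(\Z/\ell\Z)$.

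\textbf{Bootstrapping and the obstacle.} Once mod-$\ell$ surjectivity is in hand, the passage to $G(\Z_\ell)$ is the standard lemma: the image $\Gamma\subset G(\Z_\ell)$ is a closed subgroup surjecting onto $G(\Z/\ell\Z)$; since the kernel of $G(\Z_\ell)\to G(\Z/\ell\Z)$ is a pro-$\ell$ group and $G^{\der}$ is simply connected with simple special fiber, a Frattini-type argument (or Serre's lemma on subgroups of $\SL_n(\Z_\ell)$ surjecting mod $\ell$) forces $\Gamma$ to contain the derived group; the remaining abelian quotient is pinned down by the similitude character, which is surjective because the roots of unity $\zeta_{\ell^m}$ are all present over $\Fpbar$. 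I expect the main obstacle to be the mod-$\ell$ step — specifically, producing the unipotent generators in a way that is uniform in the ramification of $p$ in $F$ and does not secretly use $p$-principal polarizability in an essential way. Chai handles this for Siegel space via the toroidal boundary and the Picard--Lefschetz formula; transcribing this to the Hilbert--Blumenthal setting requires knowing that Rapoport's toroidal compactification of $\calM_{\calD,n}$ has boundary strata along which the degenerating abelian scheme has the expected monodromy, and that $\calM_n$ itself is geometrically connected so that a single well-chosen local monodromy suffices once we know the big monodromy group contains a full set of conjugates. I would therefore spend most of the effort making the local picture at the boundary precise; the rest is formal.
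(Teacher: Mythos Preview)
Your approach would work but takes a substantially longer route than the paper. The paper's proof is essentially three lines: since each cover $\calM_{n\ell^m}\to\calM_n$ is a $G(\Z/\ell^m\Z)$-torsor (see (\ref{eq:613})), surjectivity of $\rho_{\calM_n,\ell}$ is \emph{equivalent} to the connectedness of $\calM_{n\ell^m}$ for every $m\ge 0$. The paper establishes the latter by noting that $\calM_{\calD,n\ell^m}(\C)\simeq\Gamma(n\ell^m)\backslash G(\R)/SO(2,\R)^g$ is connected, so the geometric generic fiber over $\Qbar$ is connected, and then invoking Rapoport's arithmetic toroidal compactification \cite{rapoport:thesis} to transfer connectedness to the special fiber over $\Fpbar$. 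No monodromy element is ever computed. Your plan --- producing unipotent transvections from boundary degenerations and lifting mod-$\ell$ surjectivity via a Frattini/Serre argument --- is the natural strategy when connectedness of the full tower is not directly available, and it is more robust in that sense; but here it duplicates effort, since you appeal to Rapoport's compactification for the boundary picture while the paper uses the \emph{same} compactification to bypass the monodromy computation entirely. As a minor aside: $G$ as defined in (\ref{eq:601}) preserves $\psi$ on the nose, so it has no similitude quotient, and your remarks about pinning down an abelian quotient via the similitude character are not needed here.
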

\begin{proof}
It is well-known that 
$ \calM_{\calD,n\ell^m}(\C)\simeq  \Gamma(n\ell^m)\backslash
G(\R)/SO(2,\R)^g$, where  $\Gamma(n\ell^m):=\ker G(\Z)\to
G(\Z/n\ell^m\Z)$. It follows that the geometric generic fiber 
$\calM_{\calD,n\ell^m}\otimes \Qbar$ is connected. It follows from the 
arithmetic toroidal compactification constructed in Rapoport
\cite{rapoport:thesis}  
that the geometric special fiber $\calM_{n\ell^m}$ is
also connected. The connectedness of $\wt \calM_n$
confirms the surjectivity of $\rho_{\calM_n,\ell}$. \qed
\end{proof}

\subsection{}
The action of
$G(\Z_\ell)$ on $\wt \calM_n $ extends uniquely a continuous 
action of $G(\Q_\ell)$. Descending
from $\wt \calM_n$ to $\calM_n$, elements of  $G(\Q_\ell)$
induce algebraic correspondences on $\calM_n$, known as the
$\ell$-adic Hecke correspondences on $\calM_n$. More precisely, to each
$g\in G(\Q_\ell)$ we associate an $\ell$-adic Hecke correspondence
$(\calH_g,\pr_1,\pr_2)$ as follows. 
Extending isomorphisms $\eta$ to isomorphisms
\[ \eta': V\otimes \A^{(p)}_f\to V^{(p)}(A):=T^{(p)}(A)\otimes
\A^{(p)}_f,\]
we see the class $[\eta]_{U_n}$ gives rise to a class
$[\eta']_{U_n}$ in ${\rm Isom}(V\otimes \A^{(p)}_f, V^{(p)}(A))/U_n$
and $[\eta]_{U_n}$ is determined by $[\eta']_{U_n}$. 
The right translation $\rho_g: (A,\lambda,\iota,[\eta']_{U_n})\mapsto
(A,\lambda,\iota,[\eta'g]_{g^{-1}U_n g})$ gives rise an isomorphism
$\rho_g:\calM_n\simeq \calM_{g^{-1}U_n g}$. Let $U_{n,g}:=U_n\cap
g^{-1}U_n g$ and $\calH_{g}$ be the \'etale cover of $\calM_n$
corresponding to 
the subgroup $U_{n,g}\subset U_n$. Let $\pr_1$ be the natural
projection $ \calH_{g}\to\calM_n$ and $\pr_2:=\rho_g^{-1}\circ \pr: 
\calH_{g}\to\calM_n$ be the composition of the isomorphism 
$\rho_g^{-1}$ with the
natural projection $\pr:\calH_g\to \calM_{g^{-1}U_n g}$. This defines
an $\ell$-adic Hecke correspondence $(\calH_g,\pr_1,\pr_2)$. For two
$\ell$-adic Hecke correspondences 
$\calH_{g_1}=(\calH_{g_1},p_{11},p_{12})$ and 
$\calH_{g_2}=(\calH_{g_2},p_{21},p_{22})$, one defines the composition
$\calH_{g_2}\circ \calH_{g_1}$ by
\[ (\calH_{g_2}\circ \calH_{g_1}, p_1, p_2),
  \]
where $\calH_{g_2}\circ
\calH_{g_1}:=\calH_{g_1}\times_{p_{12},\calM_n, p_{21}} \calH_{g_2}$, 
$p_1$ is the composition $\calH_{g_2}\circ
\calH_{g_1}\to \calH_{g_1}\stackrel{p_{11}} {\to} \calM_n$ and $p_2$
is the composition 
$\calH_{g_2}\circ \calH_{g_1}\to \calH_{g_2}\stackrel{p_{22}} {\to}
\calM_n$. A correspondence
$(\calH,\pr_1,\pr_2)$ generated by correspondences of the form
$\calH_g$ is also called an $\ell$-adic Hecke correspondence.





A subset $Z$ of $\calM_n$ is
called {\it $\ell$-adic Hecke invariant} if $\rm{pr}_2
(\rm{pr}_1^{-1}(Z))\subset Z$ for any $\ell$-adic Hecke correspondence
$(\calH,\rm{pr}_1,\rm{pr}_2)$. If $Z$ is an $\ell$-adic Hecke invariant,
locally closed subvariety of $\calM_n$, then the $\ell$-adic 
Hecke correspondences induce 
correspondences on the set $\Pi_0(Z)$ of geometrically irreducible
components. We say that $\Pi_0(Z)$ is {\it $\ell$-adic Hecke
  transitive} if 
the $\ell$-adic Hecke correspondences operate transitively on
$\Pi_0(Z)$, that is, for any two maximal points $\eta_1, \eta_2$ of
$Z$ there is an $\ell$-Hecke Hecke correspondence
$(\calH,\rm{pr}_1,\rm{pr}_2)$ so that $\eta_2\in \rm{pr}_2
(\rm{pr}_1^{-1}(\eta_1))$. 
Let $k$ be an \ac field of \ch $p$.
For a geometric point $x\in \calM_n(k)$, denote by $\calH_\ell(x)$ the
$\ell$-adic Hecke orbit of $x$; this is the set of points generated by
$\ell$-adic correspondences starting from $x$.

\begin{lemma} \

{\rm (1)} For any point $x\in \calM_n(k)$, the corresponding abelian
variety 
$A_x$ is supersingular if and only if the $\ell$-adic Hecke orbit
$\calH_\ell(x)$ of $x$ is finite.

{\rm (2)} Any closed $\ell$-adic Hecke invariant subscheme $Z$ of
$\calM_n$ contains a supersingular point.
\end{lemma}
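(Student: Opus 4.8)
For (1), the plan is to describe the orbit group-theoretically. Fixing $x$, the $\ell$-adic Hecke orbit is identified with $\Gamma_x\backslash G(\Q_\ell)/G(\Z_\ell)$, where $G(\Q_\ell)/G(\Z_\ell)$ is the set of vertices of one type in the Bruhat-Tits building of $G$ over $\Q_\ell$ and $\Gamma_x$ is the image in $G(\Q_\ell)$ of the group $I_x(\Q)$ of self-quasi-isogenies of $(A_x,\lambda_x,\iota_x)$ that preserve the away-from-$p\ell$ level structure; moreover $\calH_\ell(x)\subseteq\calH^{(p)}(x)$, the full prime-to-$p$ Hecke orbit. If $A_x$ is supersingular, then $\End^0_{O_F}(A_x)$ is a quaternion algebra over $F$ ramified at every place above $p$ and every archimedean place, so $I_x(\R)$ is compact, $I_x$ is $\Q$-anisotropic modulo its center, its class number is finite (Borel), and hence $\calH^{(p)}(x)$, a fortiori $\calH_\ell(x)$, is finite — this is exactly the finiteness underlying the mass formula of Section~\ref{sec:04}. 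Conversely, if $A_x$ is not supersingular its Newton point is non-basic, so (Honda--Tate) $\End^0_{O_F}(A_x)\otimes\Q_\ell$ does not act fully on $V_\ell(A_x)$ and $I_x$ lands in a proper Levi subgroup of $G$ over $\Q_\ell$; then $\Gamma_x\subset I_x(\Q_\ell)$ has strictly smaller split rank than $G$, so by the Cartan decomposition it has infinitely many orbits on the building, i.e.\ $\calH_\ell(x)$ is infinite. The delicate part of (1) is making the double coset bookkeeping precise (carrying the polarization-up-to-scalar and the level data away from $p\ell$ correctly) and checking the claim that a non-basic $A_x$ has $I_x$ properly contained in a Levi of $G$ at $\ell$.

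For (2), I would first pass, by Noetherian induction, to a minimal nonempty closed $\ell$-adic Hecke-invariant subvariety $Z_0\subseteq Z$; it suffices to put a supersingular point into $Z_0$. Minimality forces $Z_0$ to have constant Newton polygon $b$: otherwise the closed locus in $Z_0$ where the polygon is strictly higher than its generic value would be a proper, nonempty, closed, Hecke-invariant subvariety, contradicting minimality. If $b$ were the ordinary polygon, then $Z_0$ would lie in the ordinary locus; but the $\ell$-adic Hecke orbit of an ordinary point is Zariski dense in $\calM_n$ (Ribet's $p$-adic monodromy for the ordinary locus \cite{ribet:hmf} together with Lemma~\ref{62}), and by minimality $Z_0$ equals the closure of any Hecke orbit it contains, so $Z_0=\calM_n$, which is absurd since $\calM_n$ contains supersingular points. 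Hence $b$ is non-ordinary, $Z_0$ lies in the non-ordinary locus, and that locus is proper by Proposition~\ref{53}(3); thus $Z_0$ is complete.

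Finally, since $\ell\neq p$, every $\ell$-power isogeny restricts to an isomorphism on $p$-divisible groups, so $\ell$-adic Hecke correspondences preserve Oort's central leaves; as central leaves are closed inside their Newton stratum and $Z_0$ is the Zariski closure of a single $\ell$-adic Hecke orbit, $Z_0$ is contained in one central leaf $C$. If $\dim Z_0=0$, then $Z_0$ is a single point with finite Hecke orbit, hence supersingular by part (1). If $\dim Z_0>0$, then $C$ is a positive-dimensional leaf containing the complete variety $Z_0$; but positive-dimensional central leaves in a non-basic Newton stratum are not complete (quasi-affineness of leaves, after Oort), so $b$ must be the supersingular polygon, whence $Z_0$ — and therefore $Z$ — consists of supersingular points in the relevant dimension and in particular contains one. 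I expect this last step, excluding a positive-dimensional complete Hecke-invariant subvariety inside a non-basic Newton stratum, to be the genuine obstacle; it is exactly the place where the foliation-theoretic input (or, failing that, the $\ell$-adic monodromy estimates developed in this section) is essential.
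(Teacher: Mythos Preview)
The paper does not prove this lemma at all; it simply cites Chai \cite{chai:ho}, Lemma~7 for part (1) and Proposition~6 for part (2). So any comparison is between your sketch and Chai's original arguments, not with anything in the present paper.

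For (1) your outline is correct and is essentially Chai's: identify $\calH_\ell(x)$ with a double coset space for $G(\Q_\ell)$ modulo the image of the self-quasi-isogeny group $I_x$, and use that $I_x$ is an inner form of $G$, anisotropic modulo center exactly when $A_x$ is supersingular.

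For (2) your route has two gaps beyond the one you flag at the end. First, ``Ribet's $p$-adic monodromy together with Lemma~\ref{62}'' does not yield Zariski density of the $\ell$-adic Hecke orbit of an ordinary point: surjectivity of a monodromy representation constrains connected \'etale covers, not Zariski closures of Hecke orbits. Density of ordinary $\ell$-adic Hecke orbits is the main theorem of \cite{chai:ho}, proved by an entirely different mechanism (Serre--Tate coordinates and a linearization argument), and in the logical order of that paper Proposition~6 is an input to that theorem, so invoking density here is at best heavy and at worst circular. Second, your endgame appeals to ``quasi-affineness of leaves, after Oort'', but Oort's quasi-affineness theorem \cite{oort:eo} is for Ekedahl--Oort strata, not for central leaves; central leaves are only known to be closed inside their Newton stratum, and a positive-dimensional complete $Z_0$ sitting inside a non-basic central leaf is not excluded by what you have written. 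Chai's own proof of Proposition~6 avoids the leaf structure altogether and argues directly with the Newton stratification and specialization; if you want a self-contained argument you should follow that rather than the foliation route.
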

\begin{proof} (1) This is Lemma 7 in Chai \cite{chai:ho}. (2) This is
  Proposition 6 in Chai \cite{chai:ho}.

\end{proof}

\subsection{}\label{65}
Put $G_\ell:=G\otimes \Q_\ell$ (Subsection~\ref{eq:601}). One has
\[ G_\ell=\prod_{\lambda|\ell} G_\lambda, \quad
G_\lambda=\Res_{F_\lambda/\Q_\ell} \SL_{2,F_\lambda}. \]
Let $\pr_\lambda:G_\ell\to G_\lambda$ be the projection map.
Let $Z$ be a smooth locally closed subscheme of $\calM_n$ that is
$\ell$-adic Hecke invariant. Let $Z^0$ be a connected component of
$Z$, and $\eta$ be the generic point of $Z^0$. Let 
\[ \rho_{Z^0,\ell}:\pi_1(Z^0,\bar \eta)\to G(\Z_\ell) \]
be the associated $\ell$-adic monodromy representation, and
$\rho_{Z^0,\lambda}:=\pr_\lambda\circ \rho_{Z^0,\ell}$ be its
projection at $\lambda$.

\begin{lemma}\label{66} \

{\rm (1)} If the image ${\rm Im }\, \rho_{Z^0,\lambda}$ is finite
  for one $\lambda|\ell$, then the image ${\rm Im }\,
  \rho_{Z^0,\lambda}$ is 
  finite for all $\lambda|\ell$.

{\rm (2)} The abelian variety $A_\eta$ is not supersingular if and
only if  the image ${\rm Im }\, \rho_{Z^0,\lambda}$ is infinite for
all $\lambda|\ell$. 
\end{lemma}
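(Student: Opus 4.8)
Throughout write $M$ for the closure of the image of $\rho_{Z^0,\ell}$ in $G(\Z_\ell)=\prod_{\lambda\mid\ell}G_\lambda(\Z_\ell)$ and $M_\lambda$ for its image under $\mathrm{pr}_\lambda$, so $M_\lambda=\ol{{\rm Im}\,\rho_{Z^0,\lambda}}$. The plan is to deduce both statements from the explicit description of the supersingular locus and from the surjectivity of the full $\ell$-adic monodromy (Lemma~\ref{62}), after one preliminary reduction. That reduction is a dichotomy coming from the hypothesis that $Z$ is $\ell$-adic Hecke invariant: for $g\in G(\Q_\ell)$ the Hecke correspondence $\calH_g$ carries $Z^0$ to another component of $Z$ by finite maps, and comparing monodromy representations inside $G(\Q_\ell)$ shows $M$ and $gMg^{-1}$ to be commensurable; projecting to $G_\lambda$ and passing to Zariski closures, the identity component of the Zariski closure of $M_\lambda$ is therefore normalized by $G_\lambda(\Q_\ell)$, hence normal in $G_\lambda$. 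Since $G_\lambda=\Res_{F_\lambda/\Q_\ell}\SL_{2,F_\lambda}$ is $\Q_\ell$-simple (Subsection~\ref{65}), it follows that each $M_\lambda$ is either finite or Zariski dense in $G_\lambda$; in particular ``$M_\lambda$ infinite'' is an unambiguous condition, and it is this condition that we must match up, place by place, with the non-supersingularity of $A_\eta$.

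First the easy implication: if $A_\eta$ is supersingular then every $M_\lambda$ is finite. Since the supersingular Newton polygon is maximal and Newton polygons only rise under specialization, $A_\eta$ supersingular forces $Z^0$ into the supersingular locus of $\calM_n$. By the description in Sections~\ref{sec:04} and~\ref{sec:05} (Proposition~\ref{44} and Theorem~\ref{26}) that locus is a union of locally closed pieces, each a product of projective lines, over which the universal abelian scheme is isogenous of $p$-power degree to a fixed superspecial abelian $O_F$-variety. As $\ell\neq p$, a $p$-power isogeny is an isomorphism on $\ell$-adic Tate modules, so $T_\ell(\calX)$ is a constant local system on each piece, and the constants agree where two pieces meet --- necessarily at a superspecial point, where the common value is the Tate module of that point. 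Hence $T_\ell(\calX)|_{Z^0}$ has finite monodromy, so $M$, and \emph{a fortiori} each $M_\lambda$, is finite. This already proves the ``if'' direction of part~(2).

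The substance of the lemma is the converse: if $A_\eta$ is not supersingular, then $M_\lambda$ is infinite for every $\lambda\mid\ell$. Granting this, part~(1) is its contrapositive combined with the easy implication --- if some $M_\lambda$ is finite then $A_\eta$ is supersingular, whence all $M_\lambda$ are finite --- and the outstanding implication of part~(2) is the contrapositive itself. I would prove the converse following Chai \cite{chai:mono}. If $A_\eta$ is not supersingular, the Newton stratum of $\eta$ is dense open in $Z^0$, so it contains a closed point $x$; by Chai's criterion \cite{chai:ho} the $\ell$-adic Hecke orbit of $x$ is infinite, and it lies in $Z$. By the density of prime-to-$p$ Hecke orbits inside Newton strata --- Ribet's theorem \cite{ribet:hmf} for the ordinary stratum and Chai's extension \cite{chai:mono} for the others --- this orbit is Zariski dense in a connected component $\calN$ of the Newton stratum of $x$. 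Let $Z'$ be the union of the components of $Z$ that are Hecke translates of $Z^0$; it is Hecke invariant, contains the orbit of $x$, and all its components share the $\ell$-adic monodromy of $Z^0$ up to conjugacy and commensurability, while $\ol{Z'}$ is a countable union of closed Hecke translates of $\ol{Z^0}$; since $\calN$ is irreducible, it lies in one of these, and we conclude that $M_\lambda$ is infinite as soon as the $\lambda$-adic monodromy of $\calN$ is. It therefore suffices to show that every non-supersingular Newton stratum of $\calM_n$ has infinite $\ell$-adic monodromy at each place: for the ordinary stratum this is immediate from Lemma~\ref{62}, since it is dense open in the geometrically irreducible $\calM_n$; for the non-ordinary, non-supersingular strata it is the remaining part of Chai's monodromy theorem, which he establishes jointly with the density statement above by a descending induction on the Newton polygon, using \cite{chai:ho} to place a supersingular point in any closed Hecke-invariant subvariety, Lemma~\ref{62} as the base case, and the present lemma at each inductive step. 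Closing this induction carries the genuine weight of the argument; the reductions described above are formal.
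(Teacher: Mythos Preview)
Your approach has two real gaps. First, for the converse direction you invoke ``density of prime-to-$p$ Hecke orbits inside Newton strata'', attributing the non-ordinary case to \cite{chai:mono}; but \cite{chai:mono} proves no such thing --- density of Hecke orbits in non-supersingular Newton strata is the Hecke orbit conjecture, established only later and certainly not available as input here. Second, your closing step is circular as written: you reduce Lemma~\ref{66} to the claim that non-supersingular Newton strata have infinite $\lambda$-adic monodromy, and then say this claim is proved by an induction that ``uses the present lemma at each inductive step''. You would have to actually set up and run that induction to see whether it is well-founded; merely naming it is not a proof. (A smaller issue: your treatment of the supersingular direction cites Proposition~\ref{44} and Theorem~\ref{26}, which assume $p$ unramified, while the present section explicitly drops that hypothesis.)

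The paper's argument for part~(1) is direct and avoids all of this. Descend $Z^0$ to a model $Z^0_0$ over a finite field $\F_q$ and pass to a finite cover so that any finite $\rho_{Z^0,\lambda}$ becomes trivial. If ${\rm Im}\,\rho_{Z^0,\lambda}$ is trivial for some $\lambda$, then the arithmetic $\lambda$-adic representation of $\Gal(k(\bar\eta_0)/k(\eta_0))$ factors through $\Gal(\Fpbar/\F_q)$ and so has commutative image; by Zarhin's theorem \cite{zarhin:end} its commutant $\End^0_F(A_{\eta_0})\otimes_F F_\lambda$ then has $F_\lambda$-dimension $2$ or $4$. Hence $A_{\eta_0}$ is of CM type, and by Grothendieck's theorem on CM abelian varieties in characteristic~$p$ \cite{oort:cm} it is isogenous, over a finite extension, to an abelian variety defined over a finite field. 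This forces the full geometric $\ell$-adic monodromy ${\rm Im}\,\rho_{Z^0,\ell}$ to be finite, hence each $\lambda$-component is finite. Part~(2) then follows by combining part~(1) with \cite[Corollary~3.5]{chai:mono}, which gives ``$A_\eta$ not supersingular $\iff$ ${\rm Im}\,\rho_{Z^0,\ell}$ infinite'' for the full $\ell$-adic image.
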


\begin{proof}
  (1) Let $Z^0_0$ be a scheme over $\F_q$ such that $Z^0=Z^0_0\otimes
      \Fpbar$, and let $\eta_0$ be the generic point of
      $Z^0_0$. Replacing by a finite surjective cover of $Z^0_0$ (thus
      of $Z^0$), we may
      assume that $\End^0(A_{\bar
      \eta})=\End^0(A_{\eta_0}):=\End (A_{\eta_0})\otimes \Q$ and that
      ${\rm Im}\,\rho_{Z^0,\lambda}=1$ whenever it is finite. Write
      the Tate module 
      $V_\ell(A_{\bar \eta})=\prod_{\lambda|\ell} V_\lambda$ into
      the decomposition with respect to the action of $F$, and let
      $\rho_{\lambda}:\Gal(k(\bar \eta_0)/k(\eta_0))\to
      \Aut(V_\lambda)$ be 
      associated $\lambda$-adic Galois representation. Let $E_\lambda$
      be the $F_\lambda$-subalgebra of $\End_{F_\lambda}(V_\lambda)$
      generated 
      by the image $\rho_{\lambda}(\Gal(k(\bar \eta_0)/k(\eta_0))$. By
      a theorem of 
      Zarhin on endomorphisms of abelian varieties over function
      fields \cite{zarhin:end}, the subalgebra $E_\lambda$ is
      semi-simple and  
      the endomorphism algebra $\End^0_F(A)\otimes_F
      F_\lambda$ is isomorphic to the commutant of $E_\lambda$ in
      $\End_{F_\lambda}(V_\lambda)$. 
      If ${\rm Im}\,\rho_{Z^0,\lambda}=1$ for some $\lambda$, then
      $\rho_\lambda$ factors through the quotient $\Gal(\Fpbar/\F_q)$,
      and thus $E_\lambda$ is commutative. In this case,
      $\dim_{F_\lambda} \End^0_{F}(A)\otimes F_\lambda$ is $2$ or $4$,
      and the same that $\dim_F\End^0_F(A)$ is $2$ or $4$. This shows
      that the abelian variety $A_{\eta_0}$ is of CM-type. 
      By a theorem of Grothendieck on CM
      abelian varieties in \ch $p$ (\cite[p.~220]{mumford:av} and
      \cite[Theorem 1.1]{oort:cm}), $A_{\eta_0}$ is isogenous to, over
      a finite extension of $k(\eta_0)$, an abelian variety that is
      defined over a finite field. This shows the image ${\rm Im}\,
      \rho_{Z^0,\ell}$ is finite. Therefore, ${\rm Im }\,
      \rho_{Z^0,\lambda}$ is 
      finite for all $\lambda|\ell$.

   (2) It is proved in \cite[Corollary 3.5]{chai:mono} that $A_\eta$
       is not 
       supersingular if and only if  the image ${\rm Im}\,
       \rho_{Z^0,\ell}$ is infinite. The statement then follows from
       (1).\qed   
\end{proof}

\begin{lemma}\label{67}
  Let $H$ be a connected normal subgroup of an algebraic group
  $G_1\times \dots\times G_r$ over a field of \ch zero, where $G_i$
  is a connected simple algebraic group. Then $H$ is of the form
  $H_1\times \dots \times H_r$ with $H_i$ is $\{1\}$ or $G_i$. 
\end{lemma}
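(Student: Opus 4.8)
The plan is to prove Lemma~\ref{67} by induction on $r$, using the fact that over a field of characteristic zero a connected normal subgroup of a connected algebraic group is a product of a subtorus of the center with some of the simple factors of the derived group. Since each $G_i$ is connected and simple (in particular semisimple with trivial center), the product $G_1\times\dots\times G_r$ is semisimple with its minimal normal connected subgroups being exactly the $G_i$, and its Lie algebra decomposes as $\gr g=\gr g_1\oplus\dots\oplus\gr g_r$ with each $\gr g_i$ simple. The key structural input is that a normal subgroup's Lie algebra is an ideal of $\gr g$, so the actual work is at the level of Lie algebras.

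First I would reduce to the Lie algebra statement: since everything is connected in characteristic zero, $H$ is the connected subgroup with Lie algebra $\gr h:=\Lie H$, and $H$ is normal in $G:=G_1\times\dots\times G_r$ if and only if $\gr h$ is an ideal of $\gr g$. So it suffices to show that any ideal $\gr h\subset\gr g_1\oplus\dots\oplus\gr g_r$ is of the form $\bigoplus_{i\in S}\gr g_i$ for some subset $S\subseteq\{1,\dots,r\}$, and then exponentiate. Second, for the Lie algebra claim I would argue as follows: let $\gr h$ be an ideal. For each $i$, the projection $\pi_i:\gr g\to\gr g_i$ maps $\gr h$ onto an ideal $\pi_i(\gr h)$ of $\gr g_i$ (since $\pi_i$ is a surjective Lie algebra homomorphism and $\gr h$ is an ideal), so by simplicity $\pi_i(\gr h)$ is $0$ or $\gr g_i$. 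Let $S=\{i:\pi_i(\gr h)=\gr g_i\}$; then $\gr h\subseteq\bigoplus_{i\in S}\gr g_i$. For the reverse inclusion, note $[\gr g_i,\gr h]\subseteq\gr h$ for each $i\in S$, and $[\gr g_i,\gr h]$ projects onto $[\gr g_i,\pi_i(\gr h)]=[\gr g_i,\gr g_i]=\gr g_i$ (simple, hence perfect), while it projects to $0$ in every other factor; hence $\gr g_i\subseteq[\gr g_i,\gr h]\subseteq\gr h$ for all $i\in S$, giving $\gr h=\bigoplus_{i\in S}\gr g_i$. Taking connected subgroups, $H=\prod_{i\in S}G_i$, i.e. $H=H_1\times\dots\times H_r$ with $H_i=G_i$ if $i\in S$ and $H_i=\{1\}$ otherwise.

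I expect the main (and really only) obstacle to be a bookkeeping one: being careful that $\gr h$ is truly an ideal (not merely a subalgebra), which is exactly where normality of $H$ enters, and making sure the characteristic-zero hypothesis is used correctly — it is what guarantees the Lie correspondence between connected normal subgroups and ideals is an equivalence, and what lets us pass freely between $H$ and $\gr h$. One can alternatively avoid Lie algebras entirely: for each $i$ the image of $H$ under the $i$-th projection $G\to G_i$ is a connected normal subgroup of the simple group $G_i$, hence $\{1\}$ or $G_i$; letting $S$ be the indices where it is $G_i$, one shows $H\supseteq G_i$ for $i\in S$ by noting that the commutator $[G_i,H]$ is a connected normal subgroup of $G_i$ contained in $H$ that surjects onto $G_i$ (since $G_i$ is perfect), hence equals $G_i$. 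Either route is routine; I would present the commutator argument since it parallels the Lie-theoretic one and keeps everything at the group level.
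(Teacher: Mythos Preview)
Your argument is correct. The paper itself does not give a proof of this lemma: it simply refers the reader to Section~9.4 of Springer's \emph{Linear Algebraic Groups}. So there is no ``paper's approach'' to compare against beyond the textbook reference; what you have written is precisely the standard argument that reference contains, carried out either on the Lie algebra side (ideals of a direct sum of simple Lie algebras are partial sums) or on the group side via commutators. Both versions you sketch are fine.

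One small correction: your parenthetical ``in particular semisimple with trivial center'' is not accurate---a simple algebraic group such as $\SL_n$ can have nontrivial finite center. Fortunately nothing in your argument uses triviality of the center: what you actually need is that each $G_i$ has no proper nontrivial \emph{connected} normal subgroup (the definition of simple), that $\Lie G_i$ is a simple Lie algebra in characteristic zero, and that $G_i$ (equivalently $\grg_i$) is perfect. All of these hold. Just drop the phrase about the center.
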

\begin{proof}
  See Section 9.4 in \cite{springer:lag}. \qed
\end{proof}

\begin{lemma}\label{68}
  Notation as in Subsection~\ref{65}, if the abelian variety $A_\eta$
  is not supersingular, then the image ${\rm Im}\, \rho_{Z^0,\ell}$ is
  an open subgroup of $G(\Z_\ell)$. 
\end{lemma}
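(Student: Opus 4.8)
The plan is to leverage the $\ell$-adic monodromy surjectivity (Lemma~\ref{62}) together with the Tate-linear structure of the situation, following Chai's argument in \cite[\S4]{chai:mono}. First I would pass to a connected finite étale cover of $Z^0$ so that the monodromy image is Zariski-connected (Zariski-closure considerations allow this, since the identity component has finite index); call the image $\Gamma:={\rm Im}\,\rho_{Z^0,\ell}$ and let $\overline{\Gamma}$ be its Zariski closure in $G_{\Q_\ell}=\prod_{\lambda|\ell}\Res_{F_\lambda/\Q_\ell}\SL_{2,F_\lambda}$. After the base change $F_\lambda\otimes_{\Q_\ell}\overline{\Q_\ell}$, each factor $\Res_{F_\lambda/\Q_\ell}\SL_{2,F_\lambda}$ becomes a product of copies of $\SL_2$, so $G_{\overline{\Q_\ell}}$ is a product of $g$ copies of a simple group.

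Next I would use Lemma~\ref{66}(2): since $A_\eta$ is not supersingular, ${\rm Im}\,\rho_{Z^0,\lambda}$ is infinite for every $\lambda\mid\ell$, hence the Zariski closure $\overline{\Gamma}_\lambda$ of each $\lambda$-component is a positive-dimensional subgroup of $\Res_{F_\lambda/\Q_\ell}\SL_{2,F_\lambda}$. The key structural input is then that $\overline{\Gamma}$ is a \emph{normal} subgroup of $G_{\Q_\ell}$ (this is where the Hecke-invariance of $Z$ enters: the $\ell$-adic Hecke correspondences conjugate the monodromy group, so its Zariski closure must be stable under conjugation by $G(\Q_\ell)$, which is Zariski-dense in $G_{\Q_\ell}$). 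Granting normality, Lemma~\ref{67} applied over $\overline{\Q_\ell}$ forces $\overline{\Gamma}$ to be a product of factors each of which is either trivial or the full $\SL_2$-factor; since each component is positive-dimensional, \emph{every} factor must be the full $\SL_2$, i.e. $\overline{\Gamma}=G_{\overline{\Q_\ell}}$ and hence $\overline{\Gamma}=G_{\Q_\ell}$.

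Finally, to upgrade from Zariski-density to openness, I would invoke the fact that $\Gamma$ is a compact $\ell$-adic analytic subgroup of $G(\Z_\ell)$ whose Zariski closure is the full semisimple (simply connected) group $G_{\Q_\ell}$; by the standard result on $\ell$-adic Lie subgroups (the Lie algebra of $\Gamma$ is an ideal in $\Lie G_{\Q_\ell}=\prod\mathfrak{sl}_2$, and a full Zariski closure forces that ideal to be everything), $\Gamma$ is open in $G(\Q_\ell)$, hence of finite index in $G(\Z_\ell)$; undoing the initial finite cover only shrinks the index by a finite factor, so ${\rm Im}\,\rho_{Z^0,\ell}$ is open in $G(\Z_\ell)$.

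The main obstacle I anticipate is establishing the normality of $\overline{\Gamma}$ in $G_{\Q_\ell}$ cleanly in the present non-principally-polarized Hilbert--Blumenthal setting: one must check that the $\ell$-adic Hecke correspondences genuinely act on $Z^0$ (using that $Z$ is Hecke-invariant and that the components are permuted, so after the finite cover $Z^0$ is preserved up to the Hecke action) and translate this into conjugation-stability of the monodromy image, exactly as in \cite[Lemma~4.2, Prop.~4.3]{chai:mono}. The other place requiring care is the passage to $\overline{\Q_\ell}$-points and the identification of $G_{\overline{\Q_\ell}}$ as a product of simple factors indexed by $\scrI=\Hom(\O,W)$, so that Lemma~\ref{67} applies verbatim; this is routine but must be spelled out since $p$ need not be unramified in this section.
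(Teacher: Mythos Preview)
Your proposal is correct and follows essentially the same route as the paper: reduce to the Hecke orbit of $Z^0$, invoke \cite[Prop.~4.1]{chai:mono} to obtain that the neutral component $H$ of the Zariski closure of the image is normal in $G_\ell$ and that the image is open in $H(\Q_\ell)$, apply Lemma~\ref{67} to write $H=\prod_\lambda H_\lambda$ with each $H_\lambda\in\{1,G_\lambda\}$, and use Lemma~\ref{66} to rule out $H_\lambda=1$. The only unnecessary detour in your argument is the base change to $\overline{\Q_\ell}$: since each $G_\lambda=\Res_{F_\lambda/\Q_\ell}\SL_{2,F_\lambda}$ is already $\Q_\ell$-simple, Lemma~\ref{67} applies directly over $\Q_\ell$ with factors indexed by the primes $\lambda\mid\ell$, so you avoid the Galois-descent step needed to pass from ``some $\SL_2$-factor over $\overline{\Q_\ell}$ is full'' to ``all of them are.''
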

\begin{proof}
  Replacing $Z$ by the orbit of the component $Z^0$ under all
  $\ell$-adic Hecke correspondences, we may assume that the set
  $\pi_0(Z)$ of connected components is $\ell$-adic Hecke
  transitive. Put 
  $M:={\rm Im}\, \rho_{Z^0,\ell}$ and let $H$ be the neutral component 
  of the algebraic envelope of $M$. It is proved in \cite[Proposition
  4.1]{chai:mono} that $M$ is open in $H(\Q_\ell)$ and 
  $H$ is a connected normal subgroup of
  $G_\ell$. By Lemma~\ref{67}, the group $H$ has the form
  $\prod_{\lambda|\ell}H_\lambda$ with $H_\lambda=\{1\}$ or
  $G_\lambda$. Since $A_\eta$ is not supersingular, it follows from
  Lemma~\ref{66} that $H=G$. This completes the proof. \qed
\end{proof}

\begin{lemma}\label{69} \
  Let $G$ be a connected simply-connected semi-simple
  algebraic group over a local field $K$ such that each simple factor
  of $G$ is $K$-isotropic. Then $G(K)$ has
  no proper subgroup of finite index.  
\end{lemma}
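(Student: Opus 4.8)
The plan is to reduce to the case of an absolutely almost simple group, invoke the Kneser--Tits theorem over local fields to see that $G(K)$ is generated by the $K$-points of split unipotent subgroups, and then show that every such subgroup dies in any finite quotient of $G(K)$.

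First I would reduce to the almost simple case. Since $G$ is simply connected and semisimple over $K$, it decomposes as $G=\prod_i \Res_{K_i/K}G_i$ with $K_i/K$ finite separable and each $G_i$ absolutely almost simple and simply connected over the local field $K_i$; the hypothesis that each simple factor of $G$ is $K$-isotropic amounts to saying that each $G_i$ is $K_i$-isotropic. Correspondingly $G(K)=\prod_i G_i(K_i)$. If $H\subseteq G_1\times G_2$ has finite index, then $H\cap(G_1\times\{1\})$ has finite index in $G_1\times\{1\}$, hence equals it once $G_1$ has no proper finite-index subgroup; likewise for $G_2$; so $H=G_1\times G_2$. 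Iterating, it suffices to treat one factor, so I may assume $G$ is absolutely almost simple, simply connected and $K$-isotropic over a local field $K$.

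Next I would quote the Kneser--Tits conjecture, a theorem of Platonov over local fields (see \cite[\S7.2]{platonov-rapinchuk:agnt}): one has $G(K)=G(K)^+$, where $G(K)^+=\langle U^+(K),U^-(K)\rangle$ is the subgroup generated by the $K$-points of the unipotent radicals $U^{\pm}$ of a pair of opposite minimal parabolic $K$-subgroups. Thus $G(K)$ is generated by the $K$-points of split unipotent $K$-groups. Now let $H\subseteq G(K)$ have finite index; its normal core $\bigcap_{g\in G(K)}gHg^{-1}$ again has finite index (only finitely many conjugates) and is normal, so I may assume $H$ normal, and it suffices to show the finite group $\bar G:=G(K)/H$ is trivial, which follows if the image of each $U^{\pm}(K)$ in $\bar G$ is trivial. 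In characteristic zero every unipotent $K$-group is split, so $U^{\pm}$ carries a central filtration with successive quotients $\cong\Ga$; passing to $K$-points, which is exact since $H^1(K,\Ga)=0$, exhibits $U^{\pm}(K)$ as an iterated extension of copies of $(K,+)$. As $K$ has characteristic zero, $(K,+)$ is a $\Q$-vector space, hence divisible, hence has no proper subgroup of finite index; and an iterated extension of groups with no nontrivial finite quotient again has none (a surjection to a finite group kills the normal subgroup, so it factors through the quotient). Hence $U^{\pm}(K)\to\bar G$ is trivial, so $\bar G=1$ and $H=G(K)$. Since in the applications within this section $K$ is always a finite extension of $\Ql$, this already proves what is needed.

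The hard part is the general local field: in positive characteristic $(K,+)$ is not divisible (multiplication by $p$ vanishes), so the divisibility step breaks down, and the $K$-points of a split unipotent group do have finite quotients. The statement is still true, but one must argue with the Chevalley (or Bruhat--Tits) commutator relations among root subgroups instead: when the relative root system of $G$ has rank $\ge 2$, a nonzero element $s$ in the kernel of $\bar x_\alpha:(K,+)\to\bar G$ forces $\bar x_{\alpha+\beta}$ to vanish on $\{c\,s\,t:t\in K\}=K$ for a suitable pair of roots $\alpha,\beta$, and one then propagates triviality to every root subgroup via the relative Weyl group; the rank-one forms require a separate treatment. As this is not needed here, I would simply cite \cite[\S7.2]{platonov-rapinchuk:agnt} for arbitrary local fields and present the characteristic-zero argument above in full.
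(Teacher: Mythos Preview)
Your argument is correct and follows essentially the same route as the paper, which simply cites the affirmative solution of the Kneser--Tits problem over local fields (Platonov in characteristic zero, Prasad--Raghunathan in positive characteristic) and notes that only the characteristic-zero case is needed. Your write-up is in fact more detailed than the paper's: you spell out the reduction to a single almost-simple factor and the divisibility argument showing that $U^{\pm}(K)$ dies in any finite quotient, whereas the paper leaves the deduction from $G(K)=G(K)^+$ implicit.
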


Lemma~\ref{69} follows immediately from the affirmative solution to the
Kneser-Tits problem for local fields (See Platonov \cite{platonov:kt} 
for more details). This is proved by 
Platonov \cite{platonov:kt} for the \ch zero case and by 
Prasad and Raghunathan \cite{prasad-raghunathan:kt} 
for the \ch $p$ case. Only the \ch zero case of Lemma~\ref{69} is
needed.






\begin{thm}[Chai]\label{611}
  Let $Z$ be an $\ell$-adic Hecke invariant, smooth locally closed
  subscheme of $M_n$. Let $\bar \eta$ be a geometric generic point of
  an irreducible component $Z^0$ of $Z$. Suppose that the abelian
  variety  
  $A_{\bar \eta}$ corresponding to the point $\bar \eta$ is not
  supersingular, and that the set $\pi_0(Z)$ of connected components 
  is $\ell$-adic Hecke transitive. Then the monodromy representation
  \begin{equation*}
    \rho_{Z^0,\ell}:\pi_1(Z^0, \bar \eta)\to G(\Z_\ell)
  \end{equation*}
is surjective and $Z$ is irreducible.       
\end{thm}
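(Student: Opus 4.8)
The plan is to combine three ingredients already at hand: the openness of the $\ell$-adic monodromy image for a non-supersingular stratum (Lemma~\ref{68}), the Hecke-transitivity hypothesis on $\pi_0(Z)$, and the rigidity statement that $G(\Ql)$ has no proper subgroup of finite index (Lemma~\ref{69}). Set $M:=\mathrm{Im}\,\rho_{Z^0,\ell}\subseteq G(\Z_\ell)$. Since $A_{\bar\eta}$ is not supersingular, Lemma~\ref{68} shows that $M$ is open, hence of finite index, in $G(\Z_\ell)$. The $\ell$-adic Hecke correspondences are prime-to-$p$ isogenies, which do not alter supersingularity, so by Hecke transitivity the geometric generic point of every irreducible component $Z^0_1,\dots,Z^0_r$ of $Z$ is again non-supersingular; writing $M_i$ for the corresponding monodromy image (so $M_1=M$), each $M_i$ is open in $G(\Z_\ell)$ by Lemma~\ref{68}.

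Next I would count connected components level by level. For $m\ge 0$ let $Z^{(m)}$ be the reduced preimage of $Z$ in $\calM_{n\ell^m}$, i.e. the pullback of the $G(\Z/\ell^m\Z)$-torsor $\calP_m$ to $Z$. Each $Z^0_i$ is irreducible, hence connected, and $\calM_{n\ell^m}$ is connected (proof of Lemma~\ref{62}), so the components of $Z^{(m)}$ lying over $Z^0_i$ form a transitive $G(\Z/\ell^m\Z)$-set isomorphic to $G(\Z/\ell^m\Z)/\overline{M}_{i,m}$, where $\overline{M}_{i,m}$ is the image of $M_i$; hence $|\pi_0(Z^{(m)})|=\sum_{i=1}^r[G(\Z/\ell^m\Z):\overline{M}_{i,m}]$. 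Since each $M_i$ is open, these indices are non-decreasing in $m$ and bounded above by $[G(\Z_\ell):M_i]<\infty$, so they stabilize; therefore the set $\widetilde\Pi$ of connected components of the preimage of $Z$ in the tower $\wt\calM_n$ is \emph{finite}, with $|\widetilde\Pi|=\sum_i[G(\Z_\ell):M_i]$. The group $G(\Ql)$ acts on $\wt\calM_n$ (extending the $G(\Z_\ell)$-action), preserves the preimage of $Z$, and induces on $\widetilde\Pi$ the lift of the Hecke action on $\pi_0(Z)$; since $G(\Z_\ell)$ permutes the components over a fixed $Z^0_i$ transitively and $\pi_0(Z)$ is Hecke transitive, $G(\Ql)$ acts transitively on $\widetilde\Pi$, so $\widetilde\Pi\cong G(\Ql)/\Sigma$ with $\Sigma$ the stabilizer of a chosen component over $Z^0$, and $[G(\Ql):\Sigma]<\infty$.

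Finally, $G(\Ql)=\prod_{\lambda\mid\ell}\Res_{F_\lambda/\Ql}\SL_{2,F_\lambda}(\Ql)$ is the group of $\Ql$-points of a connected, simply connected, semisimple $\Ql$-group each of whose simple factors is $\Ql$-isotropic (of relative rank one), so Lemma~\ref{69} forces $\Sigma=G(\Ql)$. Then $|\widetilde\Pi|=1$, which in turn forces $r=1$ and $[G(\Z_\ell):M_1]=1$: that is, $Z=Z^0$ is irreducible and $\rho_{Z^0,\ell}$ is surjective onto $G(\Z_\ell)$. The step needing the most care — and the real content, following Chai \cite{chai:mono} — is the second one: verifying that $\widetilde\Pi$ really is a finite, transitive $G(\Ql)$-set with the stated stabilizer, i.e. matching the geometric connected components of the tower of covers with the coset combinatorics of $G(\Ql)$ and checking that the a priori level-changing $\ell$-adic Hecke correspondences genuinely induce this transitive action. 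Once that bookkeeping is in place, finiteness is delivered by Lemma~\ref{68} and the conclusion by Lemma~\ref{69}.
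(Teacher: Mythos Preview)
Your proof is correct and follows essentially the same route as the paper's: both lift $Z$ to the tower $\wt\calM_n$, use Lemma~\ref{68} to see that the set $\widetilde\Pi=\pi_0(\wt Z)$ is finite, observe that $G(\Q_\ell)$ acts transitively on it (from the $G(\Z_\ell)$-action on fibers together with Hecke transitivity on $\pi_0(Z)$), and then invoke Lemma~\ref{69} to force the stabilizer to be all of $G(\Q_\ell)$, whence $|\widetilde\Pi|=1$ and $M=G(\Z_\ell)$. The only cosmetic differences are that you pass through the finite levels $Z^{(m)}$ and argue separately that every component has non-supersingular generic point, whereas the paper works directly in the tower and transfers finiteness of $\pi_0(\wt Z^0)$ to the other $\pi_0(\wt Z_i)$ via the transitive Hecke action; neither of these changes the substance of the argument.
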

\begin{proof}
  Let $\wt Z^0$ and $\wt Z$ be the preimage in $\wt \calM_n$ of the
  subschemes $Z^0$ and $Z$, respectively, under the morphism $\pi:\wt
  \calM_n\to \calM_n$. Let $Y$ be a connected component of $\wt Z^0$
  and $M$ be the image ${\rm Im}\,\rho_{Z^0,\ell}$. 
The group $\Aut(Y/Z^0)$ of deck transformations is equal to $M$. Since
the group $G(\Z_\ell)$ acts transitively on the fiber $\pi^{-1}(x)$ for
any $x\in Z$ and $G(\Q_\ell)$ acts transitively on the set $\pi_0(Z)$,
the group $G(\Q_\ell)$ acts transitively on the set $\pi_0(\wt
Z)$. This gives a homeomorphism (see \cite[Lemma 2.8]{chai:mono})
\[ Q\backslash G(\Q_\ell)\isoto \pi_0(\wt Z), \quad g\mapsto g[Y], \]
where $Q$ is the stabilizer of the class $[Y]$ (in $\pi_0(\wt
Z)$). Clearly $Q\cap G(\Z_\ell)=M$ and we have $M\backslash
G(\Z_\ell)\simeq \pi_0(\wt Z^0)$. It follows from Lemma~\ref{68} 
that $\pi_0(\wt Z^0)= M\backslash G(\Z_\ell)$ is finite. 
Write $Z=\coprod_{i=0}^r Z_i$
as a disjoint union of connected components. Since $G(\Q_\ell)$ acts 
transitively on $\pi_0(Z)$ and $\pi_0(\wt Z^0)$ is finite, 
each $\pi_0(\wt Z_i)$ is finite. We have
\begin{equation*}
  \begin{split}
  |\pi_0(\wt Z)|<\infty \implies  | Q\backslash G(\Q_\ell)| <\infty
& \implies Q=G(\Q_\ell)\  (\text{by Lemma~\ref{69}})\\
& \implies M=G(\Z_\ell).  
  \end{split}
\end{equation*}
This shows the connectedness of $\wt Z$ and hence that of $Z$.
This completes the proof. \qed
\end{proof}



\


\begin{thebibliography}{99}
\def\jams{{\it J. Amer. Math. Soc.}} 
\def\invent{{\it Invent. Math.}} 
\def\ann{{\it Ann. Math.}} 
\def\ihes{{\it Inst. Hautes {\'E}tudes Sci. Publ. Math.}} 

\def\ecole{{\it Ann. Sci. {\'E}cole Norm. Sup.}}
\def\ecole4{{\it Ann. Sci. {\'E}cole Norm. Sup. (4)}} 
\def\mathann{{\it Math. Ann.}} 
\def\duke{{\it Duke Math. J.}} 
\def\jag{{\it J. Algebraic Geom.}} 
\def\advmath{{\it Adv. Math.}}
\def\compos{{\it Compositio Math.}} 
\def\ajm{{\it Amer. J. Math.}} 
\def\grenoble{{\it Ann. Inst. Fourier (Grenoble)}}
\def\crelle{{\it J. Reine Angew. Math.}}
\def\mrt{{\it Math. Res. Lett.}}
\def\imrn{{\it Int. Math. Res. Not.}}
\def\acad{{\it Proc. Nat. Acad. Sci. USA}}
\def\tams{{\it Trans. Amer. Math. Sci.}}
\def\cras{{\it C. R. Acad. Sci. Paris S{\'e}r. I Math.}} 
\def\mathz{{\it Math. Z.}} 
\def\cmh{{\it Comment. Math. Helv.}}
\def\docmath{{\it Doc. Math. }}
\def\asian{{\it Asian J. Math.}}
\def\jussieu{{\it Journal de l'Institut de Math. de Jussieu}} 

\def\manmath{{\it Manuscripta Math.}} 
\def\jnt{{\it J. Number Theory}} 
\def\ijm{{\it Israel J. Math.}}
\def\ja{{\it J. Algebra}} 
\def\pams{{\it Proc. Amer. Math. Sci.}}
\def\smfmemoir{{\it Bull. Soc. Math. France, Memoire}}
\def\sb{{\it S{\'e}m. Bourbaki Exp.}}
\def\jpaa{{\it J. Pure Appl. Algebra}}
\def\jems{{\it J. Eur. Math. Soc. (JEMS)}}
\def\jtokyo{{\it J. Fac. Sci. Univ. Tokyo}}
\def\cjm{{\it Canad. J. Math.}}
\def\jaums{{\it J. Australian Math. Soc.}}
\def\pspm{{\it Proc. Symp. Pure. Math.}}
\def\ast{{\it Ast{\'e}riques}}
\def\pamq{{\it Pure Appl. Math. Q.}}

\def\tp{{to appear}}

\newcommand{\princeton}[1]{Ann. Math. Studies #1, Princeton
  Univ. Press}

\newcommand{\LNM}[1]{Lecture Notes in Math., vol. #1, Springer-Verlag}


\bibitem{brylinski-labesse} J.-L. Brylinski and J.-P. Labesse,
  Cohomologie d'intersecion et fonctions $L$ de certaines
  vari{\'e}t{\'e}s de Shimura, \ecole4~{\bf 17} (1984), 361--412. 

\bibitem{chai:amchb} C.-L. Chai, Arithmetic minimal compactification
  of the Hilbert-Blumenthal moduli spaces. \ann~{\bf 131} (1990),
  541--554.  


\bibitem{chai:ho} C.-L. Chai, Every ordinary symplectic isogeny
   class in positive characteristic is dense in the
   moduli. \invent~{\bf 121} (1995), 439--479. 


\bibitem{chai:mono} C.-L. Chai, Monodromy of Hecke-invariant
  subvarieties. Special issue in memory of A.~Borel. \pamq~{\bf 1}
  (2004), 291--303.  




\bibitem{deligne-pappas} P. Deligne and G. Pappas, Singularit{\'e}s des
  espaces de modules de Hilbert, en les caract{\'e}ristiques divisant le
  discriminant. \compos~{\bf 90} (1994), 59--79.

\bibitem{ekedahl-vdgeer:eo} T.~Ekedahl and G.~van der Geer, Cycle
  Classes of the E-O Stratification on the Moduli of Abelian
  Varieties. Preprint math.AG/0412272.

\bibitem{faltings-chai} G. Faltings and C.-L.~Chai,
  {\it Degeneration of abelian varieties}. Ergebnisse der Mathematik 
  und ihrer Grenzgebiete (3), 22. Springer-Verlag, Berlin,
  1990. xii+316 pp. 

\bibitem{goren-oort} E.Z. Goren and F. Oort, Stratifications of
  Hilbert modular varieties in positive characteristic. 
  \jag~{\bf 9} (2000), 111--154. 

\bibitem{sga7_1} {\it Groupes de monodromie en g\'eom\'etrie
  alg\'ebrique. I}.  S\'eminaire de G\'eom\'etrie Alg\'ebrique du
  Bois-Marie 1967--1969 (SGA 7 I). Dirig\'e par A. Grothendieck. Avec
  la  collaboration de M. Raynaud et D.~S.~Rim. \LNM{288}, 1972. 

\bibitem{harashita:sseo} S.~Harashita, Ekedahl-Oort strata contained
  in the supersingular locus in the moduli apace of abelian
  varieties. Preprint. Available via 
  http://www.math.sci.hokudai.ac.jp /$\sim$harasita/papers.html 
 
\bibitem{kottwitz:tamagawa} R. E. Kottwitz, Tamagawa
  numbers. \ann~{\bf 127} (1988), 629--646.

\bibitem{kottwitz-rapoport:alcoves} R. E.~Kottwitz and M. Rapoport,
  Minuscule alcoves for ${\rm GL}_n$ and ${\rm GSp}_{2n}$. \manmath~{\bf
  102} (2000), 403--428. 

\bibitem{moonen:bt1} B. Moonen,  Group schemes with additional
    structures and Weyl group cosets. {\it Moduli of Abelian Varieties},
  255--298.  (ed. by C. Faber, G. van der Geer and F. Oort), 
  {Progress in Mathematics}~{\bf 195}, Birkh{\"a}user 2001.

\bibitem{moonen:eo} B. Moonen, A dimension formula for Ekedahl-Oort
  strata. \grenoble~{\bf 54} (2004), 666--698.


\bibitem{mumford:av} D. Mumford, {\it Abelian Varieties.} Oxford
  University Press, 1974.

\bibitem{ngo-genestier:alcoves} B.C. Ng{\^o} and A. Genestier, 
  Alc{\^o}ves et $p$-rang des vari{\'e}t{\'e}s ab{\'e}liennes. 
  \grenoble~{\bf 52} (2002), 1665--1680.  

\bibitem{oort:cm} F. Oort, The isogeny class of a CM-type abelian
  variety is defined over a finite extension of the prime
  field. \jpaa~{\bf 3} (1973), 399--408.

\bibitem{oort:eo} F. Oort, A stratification of a moduli space of abelian
  varieties. {\it Moduli of Abelian Varieties}, 345--416. 
  (ed. by C. Faber, G. van der Geer and F. Oort), 
  {Progress in Mathematics}~{\bf 195}, Birkh\"auser 2001.

\bibitem{pappas:thesis} G. Pappas, Arithmetic models for Hilbert
  modular varieties. \compos~{\bf 98} (1995), 43--76.

\bibitem{platonov:kt} 
V.~Platonov, The problem of strong approximation and the
Kneser-Tits conjecture for algebraic groups. 
{\it Izv. Akad. Nauk SSSR Ser. Mat.}~{\bf 33} (1969) 1211--1219. 

\bibitem{platonov-rapinchuk:agnt} V.~Platonov and A.~Rapinchuk,
Algebraic groups and number theory.  
{\it Pure and Applied Mathematics, 139.} 
Academic Press, Inc., Boston, MA, 1994. 

\bibitem{prasad-raghunathan:kt} G.~Prasad and M.S.~Raghunathan, On
  the Kneser-Tits problem.  \cmh~{\bf 60}  (1985),  no. 1,
  107--121.

\bibitem{rapoport:thesis} M. Rapoport, Compactifications de l'espaces
  de modules de Hilbert-Blumenthal. \compos~{\bf 36} (1978), 255--335.


\bibitem{rapoport-zink} M. Rapoport and Th. Zink, {\it Period Spaces
    for $p$-divisible groups}. \princeton{141}, 1996.

\bibitem{ribet:hmf} K. Ribet, $p$-adic interpolation via Hilbert
  modular forms. \pspm~{\bf 29} (1975), 581--592.

\bibitem{shimura:1999} G. Shimura, Some exact formulas for quaternion
  unitary groups. \crelle~{\bf 509} (1999), 67--102.

\bibitem{springer:lag} T.A.~Springer, {\it Linear algebraic
    groups}. Second edition. Progress in Mathematics, {\bf
    9}. Birkhauser Boston,  1998. xiv+334 pp.

\bibitem{stamm} H. Stamm, On the reduction of the
   Hilbert-Blumenthal-moduli scheme with $\Gamma_0(p)$-level
   structure. {\it Forum Math.}~{\bf 4} (1997), 405--455.

\bibitem{wedhorn:eo} T. Wedhorn, The dimension of Oort strata of
  Shimura varieties of PEL-type. {\it Moduli of Abelian Varieties},
  441--471. (ed. by C. Faber, G. van  der Geer and F. Oort), 
  {Progress in Mathematics}~{\bf 195}, Birkh{\"a}user 2001.

\bibitem{yu:thesis} C.-F.~Yu, On the supersingular locus in
  Hilbert-Blumenthal 4-folds. \jag~{\bf 12} (2003), 653--698.

\bibitem{yu:reduction} C.-F. Yu, On reduction of Hilbert-Blumenthal
  varieties. \grenoble~{\bf 53} (2003), 2105--2154.

\bibitem{yu:parahoric} C.-F.~Yu, Irreducibility of the Siegel moduli
  spaces with parahoric level structure. \imrn~{\bf 2004}, No.~48,
  2593--2597.

\bibitem{yu:mass_hb} C.-F.~Yu, On the mass formula of supersingular
  abelian varieties with real multiplications. \jaums~{\bf 78} (2005),
  373--392.

\bibitem{zarhin:end}
J.G.~Zarhin, Isogenies of abelian varieties over fields of finite
characteristics, {\it Math. USSR Sbornik}~{\bf 24} (1974), 451--461.

\bibitem{zink:thesis} Th. Zink, \"Uber die schlechte Reduktion einiger
  Shimuramannigfaltigkeiten. \compos~{\bf 45} (1982), 15--107.

\end{thebibliography}
\end{document}